\title{$2$-dimensional TFTs via modular $\infty$-operads}
\author{Jan Steinebrunner}
\date{
Lecture series as part of the workshop\\
``Cobordism categories: building bridges between algebra and geometry''\\
Copenhagen, 17-21 February 2025 \\ 
{\small version of \today}
}
\begin{document}

\maketitle

This lecture series is based on joint work in progress with Shaul Barkan \cite{modular, cyclic},
as well as work in progress of the author \cite{genus}.
The five sections of these notes correspond to the five lectures, but more details have been added.

\begin{abstract}
    $2$-dimensional topological field theories ($2$D TFTs) valued in vector spaces are commutative Frobenius algebras.
    The goal of this lecture series is to generalize from the $1$-category of vector spaces to any symmetric monoidal $\infty$-category $\mathcal{C}$, i.e.~to study symmetric monoidal functors $\mathrm{Bord}_2 \to \mathcal{C}$.
    Choosing $\mathcal{C}$ to be the $(2,1)$-category of linear categories, this recovers a definition of modular functors, and choosing it to be the derived category of a ring yields a notion closely related to cohomological field theories.

    We will introduce a notion of modular $\infty$-operads and algebras over them, construct the modular $\infty$-operad of surfaces $\mathcal{M}$, and show that algebras over $\mathcal{M}$ in $\mathcal{C}$ are exactly $2$D TFTs valued in $\mathcal{C}$.
    Along the way we will encounter variants modular $\infty$-operads (such as cyclic $\infty$-operads and $\infty$-properads) as well as a proof of the $1$D cobordism hypothesis with singularities.
    This uses some (mild) $\infty$-category theory, but no familiarity with ($\infty$-)operads will be assumed.
    
    The main goal will be to filter $\mathcal{M}$ by genus to obtain an obstruction-theoretic description of $2$D TFTs with general target.
    Applying this to invertible TFTs one can construct a new spectral sequence exhibiting relations between the cohomology groups of moduli spaces of curves.
\end{abstract}

\subsubsection*{Acknowledgements}
As these notes are based on several projects, there is a long list of people who ought to receive thanks for their support and contributions, first and foremost Shaul Barkan, who co-authored \cite{properads,modular,cyclic} and who helped to build the foundations of the story told here.
More precise acknowledgements will follow in the respective papers, but
I would like to already thank the organizers and the participants of the masterclass in Copenhagen and my test-audience in Cambridge, for their curiosity and all their the insightful questions and ideas!
I am grateful to GeoTop (DNRF151) for funding the masterclass.

\newpage
\setcounter{tocdepth}{2}
\tableofcontents
\newpage

\section{2D TFTs -- 1-categorical and \captioninfty-categorical}

This lecture is a brief introduction to ($(\infty,1)$-categorical) topological field theories (TFTs) with a bias towards the kind of examples that we can later study using modular operads.
Readers who are new to the subject are encouraged to read Atiyah's (short!) original paper \cite{atiyah1988topological}, which explains how TFTs arise from physics, and Freed's excellent survey \cite{Freed-CH}, which explains both the physics background and why we might want to get higher categories involved.

I have also made an attempt to sketch what $(\infty,1)$-categories are, in the hope that this is sufficient for those readers, who are unfamiliar with them, but are familiar with basic homotopy theoretic concepts, to follow the rest of the lectures.
However, if you truly want to learn about $(\infty,1)$-categories this will not suffice.

\subsection{The 1-categorical story}

\subsubsection{The definition of TFTs.}
According to Atiyah \cite{atiyah1988topological} and Segal \cite{Segalconformal} a \hldef{$d$-dimensional topological quantum field theory}%
    \footnote{In fact, according to Atiyah's convention this would be a TQFT in dimension $d+1$. In these lecture notes we'll stick to the more common convention of letting $d$ denote the dimension of the top-dimensional manifolds that appear.}
should assign to a closed oriented $(d-1)$-manifold $M$ a vector space $\Zcal(M)$ -- the space of states associated to $M$.
Given a $d$-dimensional bordism $W\colon M \to N$, i.e.~a compact oriented $d$-manifold $W$ and an identification of its boundary as $\partial W \cong M \sqcup N^-$, the TQFT should assign a linear map $\Zcal(W)\colon \Zcal(M) \to \Zcal(N)$, which we can think of the time-evolution of states along $W$.
These assignments are subject to various axioms and in fact, also require additional coherence data.
All of this data, coherence, and axioms, can be assembled in to symmetric monoidal functor
\[
    \Zcal\colon \Bord_d \too \Vect_k.
\]
Here \hldef{$\Bord_d$} is the $d$-dimensional, oriented bordism category.
Its objects are closed oriented $(d-1)$-manifolds, and the morphisms $M \to N$ are equivalence classes of pairs $(W,i)$ of a compact oriented $d$-manifold $W$ and an orientation-preserving diffeomorphism $i\colon \partial W \cong M \sqcup N^-$.
(Two such pairs $(W,i)$ and $(W',i')$ are equivalent if there is an orientation preserving diffeomorphism $\varphi\colon W \cong W'$ such that $i' \circ \varphi = i$.)
$\Bord_d$ has a symmetric monoidal structure with $\sqcup\colon \Bord_d \times \Bord_d \to \Bord_d$ given by disjoint union of both objects and morphisms.
On the other side $\Vect_k$ is the category of vector spaces over some field $k$, and it has the symmetric monoidal structure given by the tensor product over $k$.
For $\Zcal$ to be a symmetric monoidal functor it thus has to come with coherence isomorphisms
\[
    \Zcal(M \sqcup N) \cong \Zcal(M) \otimes \Zcal(N).
\]
This is motivated by the quantum mechanical principle that a quantum state of a disjoint union of systems is not simply a pair of states, one in each system, but rather an element of the tensor product of the state spaces.
(In particular, not every state is an elementary tensor of the form $|\phi\rangle \otimes |\psi \rangle$. This is the basis of quantum entanglement.)

\subsubsection{Commutative Frobenius algebras.}
In dimension $2$ we can explicitly classify TQFTs: there is an equivalence between the groupoid%
    \footnote{
        Note that if $\alpha \colon \Zcal \Rightarrow \Zcal'$ is a symmetric monoidal natural transformation between two TQFTs, then $\alpha$ is automatically invertible.
        Thus, the category of $d$-dimensional TQFTs is a groupoid.
    }
of $2$D TQFTs and the groupoid of commutative Frobenius algebras over $k$.
A commutative Frobenius algebra is an algebra $A$ together with a non-degenerate trace $\tau\colon A \to k$, i.e.~a linear functional such that for all $a \in A \setminus \{0\}$ there is a $b \in B$ with $\tau(a\cdot b) \neq 0$.
In other words, the trace $\tau$ is non-degenerate if and only if the resulting pairing
\[
    \langle a, b \rangle \coloneq \tau(a \cdot b)
\]
is non-degenerate in the sense that it induces an isomorphism $A^\vee \cong A$.
Historically, Frobenius algebras were first studied in the context of representation theory where they are usually not commutative \cite{Nakayama1939}.
(Here non-degeneracy of $\tau$ is defined by requiring that $\tau^{-1}(0)$ contains no non-trivial left (or right) ideal.)
An example of such a Frobenius algebra is the group ring $k[G]$ of a finite group $G$ with the trace given by picking out the coefficient of the unit: $\tau(\sum_{g \in G} a_g g) \coloneq a_e$.
A natural commutative example arises from certain manifolds.
\begin{example}\label{ex:H*M-Frobenius}
    If $M$ is a closed oriented $2n$-manifold whose rational cohomology is concentrated in even degrees, then the cohomology ring $H^*(M;\Qbb)$ is a commutative Frobenius algebra over $k=\Qbb$ with coevaluation $\tau(\alpha) \coloneq \alpha([M]) \in \Qbb$ for $\alpha \in H^{2n}(M;\Qbb)$. 
    Poincar\'e duality implies that this is indeed a non-degenerate trace.
    For example $A = H^*(\mathbb{CP}^n;\Qbb) = \Qbb[x]/x^{n+1}$ is a commutative Frobenius algebra with $\tau(\sum_i a_i x^i) = a_n$.
\end{example}

\subsubsection{Classifying 2D TQFTs.}
Given a $2$D TQFT $\Zcal \colon \Bord_2 \to \Vect_k$ let $A \coloneq \Zcal(S^1)$ denote its value on the circle.
As every closed oriented $1$-manifold is a disjoint union of circles and $\Zcal$ is symmetric monoidal, we can describe the value of $\Zcal$ on any $1$-manifold as
\[
    \Zcal(M) = \Zcal\left(\bigsqcup_{i=1}^n S^1\right) \cong \bigotimes_{i=1}^n \Zcal(S^1) = A^{\otimes n}.
\]
Every $2$-dimensional bordism can be built under composition and gluing from a few elementary pieces, so to fully encode the TQFT it will suffice to record its value on these pieces.
As illustrated in \cref{fig:Frobenius-generators} the pair of pants and disk bordism give us an algebra structure on $A$ with multiplication and unit
\[
    \mu\colon A \otimes A = \Zcal(S^1 \sqcup S^1) \too \Zcal(S^1) = A
    \qqand
    \nu\colon k = \Zcal(\emptyset) \too \Zcal(S^1) = A
\]
and reading the disk as a bordism in the other direction we obtain a trace map
\[
    \tau\colon A = \Zcal(S^1) \too \Zcal(\emptyset) = k.
\]
By composing and comparing bordisms, one checks that the algebra structure is unital, associative, and commutative, and that the trace is non-degenerate.
Thus, $(A,\tau)$ is a commutative Frobenius algebra.
We can define a groupoid $\mrm{CFrob}_k$ of commutative Frobenius algebras over $k$ by defining an isomorphism of Frobenius to be an isomorphism of $k$ vector spaces that commutes with all the structure.

\begin{figure}[ht]
    \centering
    \def\svgwidth{.6\linewidth}
    \import{figures/}{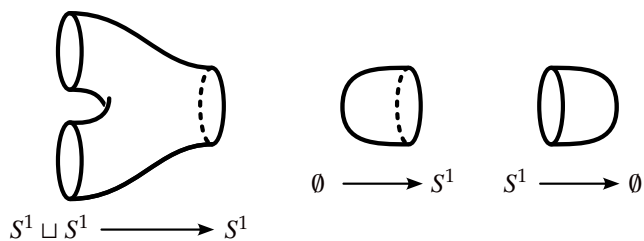}
    \caption{The three bordisms that we use to describe the commutative Frobenius structure on $S^1$.}
    \label{fig:Frobenius-generators}
\end{figure}

In this form, the above (roughly) shows that there is a functor
\begin{align*}
    \Fun^\otimes(\Bord_2, \Vect_k) & \too \mrm{CFrob}_k\\
    \Zcal & \mapsto (\Zcal(S^1), \dots)
\end{align*}
from the category (in fact groupoid) of $2$-dimensional TQFTs to the groupoid of commutative Frobenius algebras.
As every $2$-bordism can be built from pairs of pants and disks we can recover the TQFT $\Zcal$ from the Frobenius algebra $A \coloneq \Zcal(S^1)$.
A more subtle statement is that indeed every commutative Frobenius algebra comes from a TQFT. To see this, one needs to show that the relations we imposed as part of the definition of a commutative Frobenius algebra indeed suffice to deduce all relations between composite $2$-bordisms.
This is often referred to as a folk theorem, but was first proven by \cite{Abrams1996}, see also the detailed exposition in \cite{Kock2003}.
\begin{thm}\label{thm:folk-2D-TQFT}
    The above functor is an equivalence between the groupoid of $2$D TQFTs and the groupoid of commutative Frobenius algebras.
\end{thm}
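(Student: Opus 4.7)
The plan is to construct an explicit inverse functor, for which one needs a generators-and-relations presentation of $\Bord_2$ as a symmetric monoidal category. First I would use Morse theory on a bordism $W \colon M \to N$: picking a self-indexing Morse function that is standard on $\partial W$ and whose critical values are distinct, one cuts $W$ along regular level sets into a composition of elementary bordisms, each containing at most one critical point. Up to cylinders and symmetries, each such elementary piece is one of the three bordisms of \cref{fig:Frobenius-generators} or the time-reversed pair of pants and disk. This shows that $\Bord_2$ is generated as a symmetric monoidal category by the object $S^1$ together with these few morphisms, so the forward functor is faithful on morphisms once we know what it does on generators.

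Given a commutative Frobenius algebra $(A, \mu, \nu, \tau)$, I would define a candidate inverse $\Zcal_A$ by setting $\Zcal_A(S^1) = A$, assigning $\mu, \nu, \tau$ to the three generating bordisms, and using the non-degenerate pairing $\langle -, - \rangle$ to define the dual comultiplication $\Delta$ and coevaluation on the reversed generators. A general bordism is then sent to the composite obtained from any Morse decomposition. The crux is well-definedness: different Morse decompositions of the same $W$ must yield the same linear map. Cerf theory tells us that any two generic Morse functions on $W$ rel $\partial W$ can be connected by a generic path along which the only events are handle slides, birth/death of cancelling critical pairs, and swaps of adjacent critical values. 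Each such move translates into one algebraic relation on the generators, and a finite case analysis shows that every such relation is a consequence of the Frobenius axioms (unit, associativity, commutativity, and the Frobenius identity) together with symmetric monoidality.

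The main obstacle is precisely this relation-matching step. The Frobenius identity $(\mu \otimes \mathrm{id}) \circ (\mathrm{id} \otimes \Delta) = \Delta \circ \mu$, corresponding to a handle slide on the genus-one bordism $S^1 \sqcup S^1 \to S^1 \sqcup S^1$, is the most geometrically meaningful; the compatibilities involving the trace (e.g.\ the snake equations expressing that $A$ is self-dual as a module over itself) follow from non-degeneracy of $\tau$. Once well-definedness of $\Zcal_A$ is established, essential surjectivity of the original functor is immediate. Fully faithfulness follows because any symmetric monoidal natural transformation is determined by its component at the generating object $S^1$, and the constraint of naturality on the generating bordisms is precisely the condition of being a homomorphism of Frobenius algebras. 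For the detailed combinatorial verification one can follow \cite{Abrams1996} or the exposition \cite{Kock2003}.
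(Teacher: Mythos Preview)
The paper does not actually prove this theorem; it cites \cite{Abrams1996} and \cite{Kock2003} and only remarks that the subtle point is showing that the Frobenius relations suffice to deduce all relations among composite $2$-bordisms. Your sketch is a correct outline of exactly the standard Morse/Cerf argument carried out in those references, so there is nothing to compare---you have filled in what the paper deliberately left as a citation.
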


We make a few observations:
\begin{enumerate}
    \item 
    One might question to what extent \cref{thm:folk-2D-TQFT} really ``classifies'' $2$D TQFTs, as it just compares them to another notion.
    For example, it is not clear from the theorem whether there are any non-trivial TQFTs, or e.g.~how many TQFTs there are if we say fix $\dim_k \Zcal(S^1) = 4$.
    However, the theorem reduces a complicated infinite set of conditions (such as $\Zcal(W\cup V) = \Zcal(V) \circ \Zcal(W)$ for any pair of composable bordisms) to a just a handful of axioms.
    Moreover, the algebraic structure we reduce to has been studied before, albeit in a different form, in the context of representation theory. 
    \item 
    Given a commutative Frobenius algebra $A$ one can construct a comultiplication $\Delta\colon A \to A \otimes A$ on $A$ by using the pairing to dualize the multiplication.
    In the TQFT picture $\Delta$ is the image of the opposite pair of pants morphism.
    The trace $\tau$ is a counit for this comultiplication.
    One can thus also define a \hldef{commutative Frobenius algebra} as a $5$-tuple $(A, \mu, \nu, \Delta, \tau)$ where $(A,\mu, \nu)$ is a commutative unital algebra, $(A, \Delta, \tau)$ is a cocommutative counital coalgebra, and these structures are compatible in the sense that they satisfy the Frobenius relations
    \[
        (\mu \otimes \id_A) \circ (\id_A \otimes \Delta) 
        = \Delta \circ \mu
        = (\id_A \otimes \mu) \circ (\Delta \otimes \id_A) .
    \]
    \item 
    Its worth noting that all the generators and relations that we have considered so far are connected genus $0$ surfaces, i.e.~they are punctured $2$-spheres thought of bordisms in various ways.
    Thus, in some sense $\Bord_2$ is generated by genus $0$ surfaces and the relations between genus $0$ surfaces are also enough to at least classify all symmetric monoidal functors from $\Bord_2$ to $\Vect_k$.
\end{enumerate}

\subsubsection{Exploring other targets.}
So far all the TQFTs we considered were valued in the symmetric monoidal category $\Vect$ of vector spaces over $k$ for some field $k$.
The definition of TQFTs as symmetric monoidal functors suggests an immediate generalization for more general targets as follows.
For $\Ccal$ a symmetric monoidal category, we define a \hldef{$\Ccal$-valued $d$-dimensional TFT}%
    \footnote{
        We drop the ``Q'' from TQFT because what we do here is not necessarily related to quantum physics.
    }
to be a symmetric monoidal functor
\[
    \Zcal\colon \Bord_d \too \Ccal.
\]
As we shall see there are many sensible choices for what $\Ccal$ might be.

    Choosing $\Ccal = \Vect_k$ recovers the previous definition: a $d$-dimension TQFT is a $\Vect_k$-valued $d$-dimensional TFT.
    We also see that $k$ need not really be a field; any commutative ring will do.
    
    We could let $\Ccal = \mrm{sVect}_k$ be the symmetric monoidal category of super vector spaces (i.e.~$\Zbb/2$-graded vector spaces where the symmetric monoidal structure introduces a sign whenever we braid to odd dimensional vector spaces).
    An analogue of \cref{thm:folk-2D-TQFT} says that $\mrm{sVect}_k$-valued $2$D TFTs are classified by ``super commutative Frobenius algebras''.
    This is the natural setting for \cref{ex:H*M-Frobenius}:
    if $M$ is any closed oriented $2n$-manifold, then $H^*(M; \Qbb)$ is always a super commutative Frobenius algebra.

Indeed, a more refined version of \cref{thm:folk-2D-TQFT} says that $\Ccal$-valued $2$D TFTs are always classified by ``commutative Frobenius algebra objects'' in $\Ccal$.
This holds because the proof of \cref{thm:folk-2D-TQFT} does not use much about $\Vect_k$, but is rather a statement about generators and relations for the symmetric monoidal category $\Bord_2$.

Generalizing further, we can let $\Ccal$ be a symmetric monoidal \category{}.
We will discuss the various interesting choices of such $\Ccal$ in section \ref{subsec:higher-targets}.
First, however, we will need to make sense of $\Bord_d$ as a symmetric monoidal \category{}.

\subsubsection{Linear categories.}
In Segal's approach to conformal field theories \cite{Segalconformal} one encounters the notion of modular functor, which is a sort of categorification of a $2$D TFT.
The idea is to assign to $S^1$ not a vector space, but a $k$-linear category $\Zcal(S^1) = \Ccal$.
Then to a bordism $W\colon M \to N$ the modular functor should not assign a linear homomorphism, but rather a $k$-linear functor $\Zcal(W)\colon \Zcal(M) \to \Zcal(N)$.
For example, the pair of pants bordism $S^1 \sqcup S^1 \to S^1$ will induce a monoidal product $\otimes \colon \Ccal \otimes_k \Ccal \to \Ccal$.
However, as we have increased the category-level the prudent thing is not to stop at $1$-morphisms: 
the modular functor $\Zcal$ should also assign to every diffeomorphism $\varphi\colon W \cong W'$ between bordisms (that both have source and target $M \to N$) a natural isomorphism $\Zcal(\varphi)\colon \Zcal(W) \cong \Zcal(W')$ of linear functors.
Summarizing this in category-theoretic language we can say that a \hldef{modular functor} is a symmetric monodial $2$-functor
\[
    \Bord_2 \too \LinCat_k.
\]
Here $\Bord_2$ should then be a suitable symmetric monoidal $2$-category where we do not identify diffeomorphic bordisms, but we instead introduce the diffeomorphisms as $2$-morphisms. (Though, for now, we should at least identify isotopic diffeomorphisms.)
Note that this is really just a $(2,1)$-category, meaning that all its $2$-morphisms are invertible.
One could also allow non-invertible $2$-morphisms in the form of $3$-bordisms with corners; this leads to the notion of a once-extended $3$D TFTs, a key source of modular functors.

\begin{rem}\label{rem:LinCat-caveat}
    To make the above definition precise, we have to specify what exactly the $2$-category of linear categories is and which tensor product we choose on it. 
    For example, we might want to restrict to left/right exact functors and take the Deligne tensor product.
    Even then, this does not quite give the right definition of modular functor yet, as it does not allow for projective representations or equivalently for central extensions of the bordism category.
    For a detailed discussion of these issues, and in fact a classification of certain modular functors using like-minded ideas, see \cite{BrochierWoike-modular}.

    For suitable such definitions, the unit of $\LinCat_k$ is the linear category of $k$-vector spaces $\Vect_k$ and the category of (left exact) functors $\Vect_k \to \Vect_k$ is equivalent to the category of vector spaces itself, with the comparison
    \[
        \Fun(\Vect_k, \Vect_k) \too \Vect_k
    \]
    given by evaluating on $k$.
    Then, since the category $\Bord_2(\emptyset,\emptyset)$ is the groupoid of closed surfaces where morphisms are isotopy classes of diffeomorphisms, a modular functor gives us representation of the mapping class groups $\pi_0\Diff(\Sigma_g)$ for all $g$.
    (If we considered a central extension of the surface category, it gives us projective representations.)
\end{rem}

\subsection{\texorpdfstring{$\Bord_2$ as an $(\infty,1)$-category}{Bord2 as an infinity category}}
For the purpose of this lecture series I'll be a bit vague about the exact definition of an \category{}, but I'll try to be precise about the interface we can use to produce \categories{}.

\subsubsection{What are spaces?}
Let's first discuss what we want \categories{} to be.
An ordinary category is equivalently a category enriched in sets: for any two objects $x, y \in \Ccal$ we have a set $\Map_\Ccal(x,y)$ of morphisms/maps from $x$ to $y$.
Similarly, an \category{} should be a category enriched in spaces meaning that for any two objects $x, y \in \Ccal$ we have a space $\Map_\Ccal(x,y)$ of morphisms/maps from $x$ to $y$.
Here a ``space'' should be understood as a somewhat nebulous object that captures a homotopy type.
There are a few different concrete ways of thinking about spaces:
\[
    \Top = \{ \text{topological spaces} \}
    \sim \mrm{CW} = \{ \text{CW cpxs.} \}
    \sim \mrm{Kan} = \{ \text{Kan cpxs.} \}
    \sim \mrm{sSet} = \{ \text{simplicial sets} \}
\]
More precisely, the \category{} of spaces $\An$ is the \category{} obtained by taking any of above $1$-categories and universally inverting the suitable notion of weak equivalence.
So while we still don't precisely know what an \category{} is, this tells us that there is a functor%
    \footnote{
        One could refer to functors between \categories{} as $\infty$-functors, but this is unnecessary notation as $\infty$-functors recover ordinary functors in the sense that an $\infty$-functor between to $1$-categories is the same as an ordinary functor.
    }
of \categories{}
\[
    \Top \too \An
\]
that turns a topological space into a ``space''.
Note that this functor cannot have an inverse as it sends all contractible topological spaces to isomorphic objects in $\An$.

\subsubsection{What are \captioninfty-categories?}
Since we can turn topological spaces into spaces, we should also be able to turn categories enriched in topological spaces into categories enriched in spaces, i.e.~\categories{}.
Indeed, there is a functor
\[
    \ac\colon \TopCat \too \Cat
\]
that takes a topologically enriched category, i.e.~an ordinary category $\Ccal$ together with a topology on each set $\Map_\Ccal(x,y)$ such that composition is continuous, and turns it into an \category{}.
In fact, every \category{} can be obtained this way, and we can think of $\TopCat$ as our ``model'' for $\Cat$ as long as we have the right notion of weak equivalence, namely Dwyer-Kan equivalences.
\begin{defn}
    For a topologically enriched category $\Ccal$ we define its homotopy category $h(\Ccal)$ to be the ordinary category obtained by identifying any two morphisms $f,g\colon x \to y$ in $\Ccal$ that can be connected by a path in $\Map_\Ccal(x,y)$.
    A functor $F \colon \Ccal \to \Dcal$ of topologically enriched categories is a \hldef{Dwyer--Kan equivalence} if it satisfies
    \begin{enumerate}
        \item (fully faithful) 
        for any two $x,y \in \Ccal$ the continuous map $F\colon \Map_\Ccal(x,y) \to \Map_\Ccal(F(x),F(y))$ is a weak equivalence, and
        \item (essentially surjective)
        for all $z \in \Dcal$ there is an $x \in \Ccal$ such that $F(x)$ is isomorphic to $z$ in the homotopy category $h(\Dcal)$.
    \end{enumerate}
\end{defn}
Therefore, we can define $\Cat$ as the \category{} obtained from the ordinary category $\TopCat$ by inverting Dwyer--Kan equivalences.

\begin{war}
    While it is true that every \category{} can be represented by a topologically enriched category, the corresponding statement for functors is a bit more subtle.
    If we fix two topological categories $\Ccal$ and $\Dcal$ then not every $\infty$-functor $\ac(\Ccal) \to \ac(\Dcal)$ can be represented by a topologically enriched functor $\Ccal \to \Dcal$.
    However, this is true if we first replace $\Ccal$ and $\Dcal$ by certain Dwyer--Kan equivalent topological categories.
    
    This is one of the reasons why the model of ``quasicategories'' is so popular: every \category{} can be represented by a quasicategory and if I fix two quasicategories then every $\infty$-functor between them can be represented by a map of quasicategories.
    Some other reasons for the popularity are that they are quite easy to define (a quasi-category is a simplicial set that satisfies the combinatorial ``inner horn filling condition''), admit an easy construction of functor categories, and that Joyal and Lurie have used this model to prove most of the basic facts you'll need for every-day $\infty$-category theory.
    The reason we won't really be talking about quasicategories in this lecture series is that they take a while to get used to, and they obscure the idea that \categories{} should be categories enriched in spaces.
\end{war}

\subsubsection{The homotopy hypothesis.}
An \hldef{$\infty$-groupoid} is an \category{} where every morphism admits and inverse (up to homotopy).
Grothendieck's homotopy hypothesis postulates that an \groupoid{} should be the same as a space.
This is indeed the case for any reasonable implementation of spaces and \categories{}.
To make sense of this, consider the path-category construction
\[
    \Pi_{\infty}\colon \Top \too \TopCat \too \Cat
\]
where $\Pi_\infty(X)$ is the topologically enriched category whose objects are points of $X$ and where morphisms $x \to y$ are (Moore) paths from $x$ to $y$.
Concretely, the topological space of morphisms is 
\[
    \Map_{\Pi_\infty(X)}(x,y) \coloneq \{(t,\gamma) \in [0,\infty) \times \Map(\Rbb, X) \;|\; \gamma(s) = x \text{ for } s \le 0 \text{ and } \gamma(s) = y \text{ for } s \ge t\}.
\]
Such morphisms can be composed by $(t,\gamma) \circ (s,\delta) = (t+s, \gamma*\delta)$ and under this composition every path admits an inverse morphism (up to homotopy) given by the reverse path.
Thus, $\Pi_\infty(X)$ lands in the full subcategory $\mrm{Gpd}_\infty \subset \Cat$ of \groupoids{}.
This functor sends weak equivalences of topological spaces to equivalences of \categories{} and thus descends to a functor $\An \too \mrm{Gpd}_\infty$, which is an equivalence.

All of the above means that we can think of spaces both as topological spaces and as \categories{} where all morphisms are invertible.
In particular, this means that if we construct a topologically enriched \emph{groupoid} $G$, then $\ac(G) \in \Cat$ is an \groupoid{} and thus a space.
We can alternatively obtain this space by first taking the geometric realization $|N_\bullet G|$ of the nerve of $G$, which gives us a topological space that represents the same space as $\ac(G)$.
    
\begin{example}
    This construction will be quite useful to construct certain moduli spaces.
    For example, consider the topologically enriched groupoid $\Mfd_d^\cong$ where objects are closed oriented $d$-manifolds and the mapping spaces $\Map(M,N)$ are the spaces of (smooth) diffeomorphisms with the Whitney $\Ccal^\infty$-topology.
    Then the space $|\Mfd_d^\cong| = \ac(\Mfd_d^\cong) \in \An$ is the \hldef{moduli space of closed $d$-manifolds}.
    If we're worried about set theory, we might want to restrict the objects to be submanifolds of some $\Rbb^k$.
    This also hints at a way of directly constructing this moduli space as a topological space, namely we can define a topology on the set of closed oriented $d$-dimensional submanifolds of $\Rbb^\infty = \bigcup_{k\ge0} \Rbb^k$ by identifying it with the quotient space
    \[
        \coprod_{[M]} \Emb(M, \Rbb^\infty) / \Diff(M)
    \]
    where the coproduct runs over representatives for diffeomorphism classes of manifolds and $\Diff(M)$ acts by precomposition on the space of embeddings.
\end{example}

\subsubsection{Spaces of bordisms.}
We would now like to construct an \category{} $\Bord_d$ where for two closed oriented $(d-1)$-manifolds $M$ and $N$ the space $\Map_{\Bord_d}(M, N)$ is a space of bordisms from $M$ to $N$.
The simplest%
\footnote{
    Of course what is or isn't simple is subjective, and the reader might find the approach of \cite{GMTW}, where the authors construct a topological space of bordisms, more straightforward.
}
way of describing such a space is as a topological groupoid.
As we just discussed a topological groupoid yields an \groupoid{} and thus a space, and if we wanted to go through topological spaces we can simply take the geometric realization of the groupoid we're about to describe.
The topologically enriched groupoid $\Bord_d(M,N)$ has as objects tuples $(t, W)$ of $t\in (0,\infty)$ and a compact oriented $d$-dimensional sub manifold%
\footnote{
    In fact, we could also replace $\Rbb^\infty$ here by $\Rbb^{2d+1}$ because every $d$ manifold can be embedded into $\Rbb^{2d+1}$ by Whitney's embedding theorem.
    If you've seen similar definitions before, this might be confusing because $\Emb(W, \Rbb^{2d+1})$ is not contractible.
    The reason that this doesn't matter is that our space of morphisms is a topologically enriched groupoid and not a topological space. We therefore don't need that the space of embeddings $W \hookrightarrow \Rbb^\infty$ is contractible, we just need that the set of embeddings $W \hookrightarrow \Rbb^{2d+1}$ is non-empty.
}
$W \subset [0,t] \times \Rbb^\infty$ such that $\partial W = \{0\} \times M \sqcup \{t\} \times N$ and $W$ is cylindrical near the boundary, meaning that
\[
    [0,\varepsilon) \times M \sqcup (t-\varepsilon,t] \times N \subset W.
\]
A morphism $(t,W) \to (s,W')$ in $\Bord_d(M,N)$ is an orientation preserving diffeomorphism $\varphi\colon W \cong W'$ that satisfies $i' = \varphi\circ i$ and is cylindrical near the boundary.%
\footnote{
    Here ``cylindrical'' means that near the boundaries $\varphi$ is of the form $\id_{[0,\varepsilon)} \times \varphi_0$ and $\id_{(t-\varepsilon,t]} \times \varphi_t$ for some $\varepsilon >0$.
}
We can topologise this space of morphisms using the Whitney $\mathcal{C}^\infty$-topology, i.e.~the topology of convergence in all derivatives.

If we geometrically realize this topological groupoid we obtain a topological space, whose homotopy type is 
\[
    |\Bord_d(M,N)| \simeq \bigsqcup_{[W\colon M \to N]} B\Diff_\partial(W).
\]
Here the disjoint union runs over diffeomorphism types of bordisms, i.e.~the set of path components of $|\Bord_d(M,N)|$ recovers the set of morphisms in the $1$-category of bordisms from the start of the lecture.
Each component corresponding to the diffeomorphism class of some $W \colon M \to N$ is the classifying space for the topological group $\Diff_\partial(W)$ of diffeomorphisms $\varphi\colon W \cong W$ that restrict to the identity on the boundary.%
\footnote{
    In fact, our definition of $\Bord_d(M,N)$ suggests that we should consider the smaller group $\Diff_{\varepsilon \partial}(W) \subset \Diff_\partial(W)$ where diffeomorphisms are required to be the identity on a specified collar of the boundary.
    It follows from the theorem of Cerf (about the contractibility of the space of collars) that this inclusion is a homotopy equivalence, so we can choose to work with $\Diff_\partial(W)$ instead.
}

As we'll mostly be talking about the case of $d=2$, let us recall some facts about the diffeomorphism groups of surfaces.
The group of path components $\pi_0\Diff_\partial(W)$ is the \hldef{mapping class group} of $W$; its elements are isotopy classes of diffeomorphisms of $W$.
\begin{thm}[Smale, Earle--Eells]
    The diffeomorphism group of the $2$-sphere and torus are equivalent to the following Lie groups
    \[  
        \SO(3) \simeq \Diff(S^2) 
        \qqand
        \mrm{SL}_2(\Zbb) \ltimes (S^1 \times S^1) \simeq \Diff(T^2) \,.
    \]
    For all other connected surfaces $\Sigma$ the identity component of the diffeomorphism group is contractible and thus $\Diff_\partial(\Sigma) \to \pi_0\Diff_\partial(\Sigma)$ is an equivalence.
\end{thm}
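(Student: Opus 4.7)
The plan is to split the theorem into three cases: $\Diff(S^2)\simeq \SO(3)$ via Smale, $\Diff(T^2)$ via the translation action, and surfaces with $\chi<0$ via Teichmüller theory. In each case the common strategy is to identify a transitive action of (the identity component of) the diffeomorphism group on an auxiliary space, and reduce the problem to showing that the stabilizer is contractible. Throughout, $\Diff$ is understood to be the orientation-preserving diffeomorphism group.

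For $\Sigma=S^2$, evaluating the derivative at a chosen orthonormal frame at a basepoint yields a principal fibration $\Diff^+(S^2)\to \mathrm{Fr}^+(TS^2)_x\cong \SO(3)$. Restriction to a small disk around the basepoint identifies the fiber, up to homotopy equivalence, with $\Diff_\partial(D^2)$. Smale's classical theorem asserts that $\Diff_\partial(D^2)$ is contractible, proved by a canonical ``straightening'' isotopy that deforms any boundary-fixing diffeomorphism to the identity. Composing the fibration with the projection on the base then gives the desired equivalence.

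For $\Sigma=T^2$, the translation action embeds $T^2\hookrightarrow \Diff_0(T^2)$, and evaluation at a basepoint fits into a fiber sequence
\[
\Diff_0(T^2,x_0) \too \Diff_0(T^2) \too T^2.
\]
After cutting out a small disk around $x_0$, the fiber is equivalent to $\Diff_\partial(\Sigma_{1,1})$, whose contractibility falls under the negative-Euler-characteristic case treated below (the once-holed torus admits a complete hyperbolic metric with geodesic boundary). The mapping class group is then identified with $\mrm{SL}_2(\Zbb)$ via its faithful action on $H_1(T^2;\Zbb)\cong\Zbb^2$, and the semidirect-product structure follows from the obvious splitting of $SL_2(\Zbb)\hookrightarrow \Diff^+(T^2)$ by linear automorphisms of $\Rbb^2/\Zbb^2$.

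For $\chi(\Sigma)<0$, let $\mathcal{M}_{-1}(\Sigma)$ be the space of hyperbolic metrics on $\Sigma$ (with geodesic boundary, in the bounded case). Three facts are needed: (i) $\mathcal{M}_{-1}(\Sigma)$ is contractible, e.g.\ via convex combinations of Beltrami differentials; (ii) the quotient $\mathcal{M}_{-1}(\Sigma)/\Diff_0(\Sigma)=\mathrm{Teich}(\Sigma)$ is diffeomorphic to a Euclidean space, hence contractible; (iii) $\Diff_0(\Sigma)$ acts freely on $\mathcal{M}_{-1}(\Sigma)$, because any hyperbolic isometry isotopic to the identity is the identity, using faithfulness of $\mathrm{MCG}(\Sigma)\to\mathrm{Out}(\pi_1\Sigma)$ (Dehn--Nielsen--Baer). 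Together these yield a principal $\Diff_0(\Sigma)$-bundle $\mathcal{M}_{-1}(\Sigma)\to\mathrm{Teich}(\Sigma)$ with contractible base and total space, forcing $\Diff_0(\Sigma)\simeq *$ from the long exact sequence. The main obstacle is the analytic input needed to promote the set-theoretic quotient $\mathcal{M}_{-1}(\Sigma)/\Diff_0(\Sigma) = \mathrm{Teich}(\Sigma)$ into an honest principal $\Diff_0(\Sigma)$-bundle: this is Ebin's slice theorem in infinite-dimensional Riemannian geometry, or, equivalently, a transverse section constructed via harmonic maps into a hyperbolic representative of each conformal class. A secondary but genuinely geometric ingredient is Smale's theorem itself in the $S^2$ case, which despite its classical flavor is non-trivial.
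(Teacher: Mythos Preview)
The paper does not supply a proof of this theorem; it is quoted as a classical result of Smale and Earle--Eells and used as input. So there is nothing in the paper to compare your argument against. Your sketch is essentially the standard route, and the $S^2$ and $\chi(\Sigma)<0$ cases are correctly outlined.

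The $T^2$ case has a real slip, however. You assert that the fiber of $\Diff_0(T^2)\to T^2$ is, after excising a disk, ``equivalent to $\Diff_\partial(\Sigma_{1,1})$, whose contractibility falls under the negative-Euler-characteristic case''. But $\Diff_\partial(\Sigma_{1,1})$ is \emph{not} contractible: its group of components is the mapping class group of the once-holed torus, a central $\Zbb$-extension of $\mrm{SL}_2(\Zbb)$ generated by the boundary Dehn twist. The $\chi<0$ case only gives contractibility of the \emph{identity component}. There is also a missing $\SO(2)$: passing from ``fixes the point $x_0$'' to ``fixes a disk around $x_0$'' is governed by the derivative-at-$x_0$ fibration over $\mrm{GL}_2^+(\Rbb)\simeq\SO(2)$, so the fiber you wrote down is off by this factor even before worrying about components. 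One clean repair is to run the fibration
\[
\Diff_\partial(\Sigma_{1,1})\too\Diff^+(T^2)\too\Emb(D^2,T^2)\simeq T^2\times\SO(2)
\]
on the full diffeomorphism group, feed in that the fiber has contractible components by the $\chi<0$ case, and then read off both the identity component and $\pi_0$ from the long exact sequence together with the affine section $\mrm{SL}_2(\Zbb)\ltimes T^2\hookrightarrow\Diff^+(T^2)$.
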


The mapping class groups $\Diff_\partial(\Sigma_{g,k})$ are usually infinite, always finitely presented and are well-studied in geometric group theory.

\begin{rem}
    Note that the above theorem means that $\Bord_2$ is \emph{almost} a $(2,1)$-category.
    The only thing that keeps its mapping spaces from being $1$-groupoids are $\Diff(S^2)$ and $\Diff(S^1 \times S^1)$.
    In particular, if we consider variants of $\Bord_2$ where closed manifolds are not allowed, then they'll be $(2,1)$-categories.
\end{rem}

\subsubsection{The bordism category as an \captioninfty-category.}
We now assemble the above into an \category{}.
For a more careful discussion, also in the extended case, see \cite{CS19}.
For a treatment similar to the one given here and a discussion of why we don't have to provide identity morphisms, see \cite{KK24}.

We can now define $\Bord_d$ as the \category{} associated to the following topological category.
Objects are closed oriented $(d-1)$-manifolds, the space of morphisms from $M$ to $N$ is given by $|\Bord_d(M, N)|$, and composition is defined as the realization of the functor
\[
    -\cup_N-\colon \Bord_d(M, N) \times \Bord_d(N, L) \too \Bord_d(M, L)
\]
that glues two bordisms along $N$.
Since we have embedded our bordisms in a cylindrical way we can glue them simply by shifting one of them and then taking a union.
\[
    W \cup_N V = (s,V) \circ (t,W) = (s+t, (V+s) \cup W)
\]
This works well with diffeomorphisms between bordisms and thus extends to a functor of topologically enriched groupoids.
The observant reader might complain that this $\Bord_d$ does not have identity morphisms: there is a canonical isomorphism $(M \times [0,1]) \cup_M W \cong W$, but these morphisms aren't equal.
This can be addressed by modifying our model, but only at the cost of making it more convoluted. 
(Though there are easier solutions if one works with topological spaces of submanifolds, this is for example done in \cite{GMTW}.)
Instead of doing that, we can use that the ``associated category'' functor $\ac\colon \TopCat \to \Cat$ can be uniquely extended to the category of quasi-unital topologically enriched categories.
These are topologically enriched categories $\Ccal$ that are not necessarily strictly unital, but which for each $x \in \Ccal$ there exists a quasi-unit $q_x\in \Ccal(x,x)$ such that the maps
\[
    q_x \circ -\colon  \Ccal(w,x) \too \Ccal(w,x)
    \qqand
    - \circ q_x\colon  \Ccal(x,y) \too \Ccal(x,y)
\]
are homotopic to the identity.
We'll briefly encounter quasi-units again later in \cref{thm:non-unital}.

\subsection{Symmetric monoidal structures}
In order to complete the definition of TFTs valued in a symmetric monoidal \category{} we still have to define what a symmetric monoidal structure on an \category{} is and how to construct such a structure on $\Bord_d$.
We would like to make the following definition:
\begin{defn}
    A symmetric monoidal \category{} is a commutative monoid in $\Cat$
    \[
        \SM \coloneq \CMon(\Cat).
    \]
\end{defn}
But this just leaves us with the question of what a commutative monoid in an \category{} is.

\subsubsection{An example: disjoint union of manifolds.}
Let's start with a simpler example. 
The \category{} $\Mfd_d^\cong = \Bord_d(\emptyset, \emptyset)$ obtained from the topological groupoid of $d$-manifolds and diffeomorphisms between them should be a commutative monoid under the multiplication operation
\[
    - \sqcup -\colon \Mfd_d^\cong \times \Mfd_d^\cong \too \Mfd_d^\cong
\]
that takes the disjoint union of two $d$-manifolds.
However, as for the gluing of bordisms in the last lecture ``taking the disjoint union'' of two $d$ manifolds is not literally associative or commutative.
(Say if they're each in $\Rbb^d \times [0,1]$, then we need to choose some order in which to stack them in the $[0,1]$-direction.)
Of course, if we know that two $d$-manifolds are already \emph{disjoint} submanifolds of $\Rbb^d \times [0,1]$ then we can just take their union, which is trivially associative, unital, and commutative whenever it is defined.
(Since the Boolean algebra of subsets of $\Rbb^d \times [0,1]$ is a commutative monoid under union.)
In other words, we have a zig-zag of topologically enriched groupoids
\[
    \Mfd_d^\cong \times \Mfd_d^\cong \xleftarrow[\quad]{\simeq}
    \{ (W_1,W_2) \;|\; W_1, W_2 \subset \Rbb^d \times [0,1], W_1 \cap W_2 = \emptyset \} \xtoo{\cup}
    \Mfd_d^\cong 
\]
where the leftward map is a Dwyer-Kan equivalence of topological groupoids.
As passing to the \category{} of spaces turns this equivalence into an isomorphism, and so we can still think of it as giving a binary operation on $\Mfd_d^\cong$.
To assert the associativity of this operation, we need to consider disjoint unions of more than two manifolds.
For a finite set $A$, let $A_+ \colon A \sqcup \{\infty\}$ be the finite set obtained by freely adjoining a base point.
Then let
\[
    \Mfd_d^\cong(A_+) \coloneq \{ W \subset \Rbb^d \times [0,1], \alpha\colon W \to A\}
\]
be the topologically enriched groupoid of closed submanifolds $W \subset \Rbb^d$ together with a continuous map $\alpha\colon W \to A$.
(Morphisms are diffeomorphisms that commute with the map to $A$.)
If we set $A = \ul{2} =\{1,2\}$ this recovers the previous definition as giving a map $W \to \ul{2}$ is the same as giving a disjoint decomposition $W = W_1 \sqcup W_2$.
This defines a functor from the category of pointed finite sets to the category of topologically enriched groupoids
\[
    \Mfd_d^\cong(-)\colon \Fin_* \too \TopGpd.
\]
Here to a morphism $\beta\colon A_+ \to B_+$ we assign the functor
\begin{align*}
    \Mfd_d^\cong(A_+) &\too \Mfd_d^\cong(B_+), \\
    (\alpha \colon W \to A) &\longmapsto (\beta \circ \alpha \colon W' \to B)
\end{align*}
where $W' \coloneq (\beta \circ \alpha)^{-1}(B) \subset W$ is the part of $W$ that is not sent to $\infty$ by $\beta \circ \alpha$.
The binary operation can then be recovered as the zig-zag
\[
    \Mfd_d^\cong(\ul{1}_+) \times \Mfd_d^\cong(\ul{1}_+) 
    \xleftarrow[\quad]{(\Mfd_d^\cong(\rho_1), \Mfd_d^\cong(\rho_2))}
    \Mfd_d^\cong(\ul{2}_+)
    \xtoo{\Mfd_d^\cong(\mu)}
    \Mfd_d^\cong(\ul{1}_+)
\]
where $\mu\colon \ul{2}_+ \to \ul{1}_+$ is the map that sends $1, 2 \mapsto 1$.
The maps $\rho_i$ are an instance of a more general map 
\begin{align*}
    \rho_a \colon A_+ &\too \ul{1}_+\\
    x &\mapsto \begin{cases}
        1 & \text{ if }x = a \\
        \infty & \text{ otherwise}
    \end{cases} 
\end{align*}
that we can define for all finite sets $A$ and elements $a \in A$.

\subsubsection{Commutative monoids in an \captioninfty-category.}
Motivated by the above example, let us now define what a commutative monoid in a \category{} $\Ccal$ is.
For simplicity, we will assume that finite products exist in $\Ccal$, though this is not really necessary.
This definition is modelled on Segal's definition of $\Gamma$-spaces in \cite{Segal1974}, see \cref{rem:Segal-Gamma}, for a modern approach see \cite[\S1]{gepner-universality}.

\begin{defn}\label{defn:CMon}
    A \hldef{commutative monoid} in $\Ccal$ is a functor $M\colon \Fin_* \to \Ccal$ such that for every finite set $A$ the map
    \begin{equation}\label{eqn:Segal-condition}
        (\rho_a)_{a \in A}\colon M(A_+) \too \prod_{a \in A} M(\ul{1}_+)
    \end{equation}
    is an equivalence.
\end{defn}

As for the manifold example we can obtain a multiplication on $M(\ul{1})_+$ via the zig-zag
\[
    M(\ul{1}_+) \times  M(\ul{1}_+) \xleftarrow[(\rho_1,\rho_2)]{\simeq}  M(\ul{2}_+) \xrightarrow{\mu} M(\ul{1}_+)
\]
where the left map is an equivalence by the Segal condition.
This multiplication map is unital, associative, and commutative, up to higher coherence, because it comes as part of the functor $M$.

\begin{defn}
    The \category{} of commutative monoids in $\Ccal$ is the full subcategory 
    \[
        \hldef{\CMon(\Ccal)} \subset \Fun(\Fin_*, \Ccal)
    \]
    on those functors that satisfy the Segal condition from \cref{eqn:Segal-condition}.
    In particular, the \category{} of symmetric monoidal \categories{} is the full subcategory
    \[
        \hldef{\SM} \coloneq \CMon(\Cat) \subset \Fun(\Fin_*, \Cat).
    \]
\end{defn}

We can define the symmetric monoidal structure on $\Bord_d$ analogously to how we defined the disjoint union on $\Mfd_d^\cong$.

\begin{rem}\label{rem:Segal-Gamma}
    Commutative monoids in the \category{} of spaces $\An$ are also known as $\Ebb_\infty$-algebras in spaces, or special $\Gamma$-spaces, see \cite{Segal1974,gepner-universality}.
    If we additionally require that the induced multiplication on $\pi_0 M(\ul{1}_+)$ has inverses, then we get very special $\Gamma$ spaces, or equivalently infinite loop spaces.
\end{rem}

\begin{rem}
    The above definition of symmetric monoidal \categories{} is equivalent, via the straightening-unstraightening equivalence, to the one given in \cite{HA}.
    The upside of the definition given here is that it does not mention cocartesian fibrations and instead directly considers functors to $\Cat$.
    The downside is that often the best way to define a functor to $\Cat$ is exactly be writing down a cocartesian fibration and using straightening (thus factoring through Lurie's description), but in fact this is not needed for the examples we consider here.
    In the case of $\Bord_d$ we can directly write down a functor $\Fin_* \to \TopCat \to \Cat$ and trying to write it as a cocartesian fibration would only complicate matters.
\end{rem}

A more conceptual way of arriving at the Segal condition, which will be easier to generalize, is as follows.
For this we consider a generalization of the maps $\rho_a\colon A_+ \to \ul{1}_+$.
This is an instance of the general framework of ``algebraic patterns'' developed by Chu--Haugseng \cite{patterns1}, which we will implicitly be using throughout the lecture series.
The reader interested in homotopy coherent structures is strongly encouraged to also look at \cite[Sections 1-8]{patterns1}.
\begin{defn}
    A map $f\colon A_+ \to B_+$ in $\Fin_*$ is called \hldef{inert} if $f^{-1}(b)$ has exactly one element for all $b \in B$. (Though $f^{-1}(\infty)$ is allowed to have as many elements as it wants.)
    Let $\Fin_*^\xint \subset \Fin_*$ denote the wide subcategory where the morphisms are inert maps.
\end{defn}

Then we have the following equivalent characterization of commutative monoids.
\begin{lem}[{\cite[Lemma 2.9]{patterns1}}]\label{lem:Segal-RKE}
    A functor $M\colon \Fin_* \to \Scal$ satisfied the Segal condition if and only if the restricted functor $M_{|\Fin_*^\xint}$ is right Kan extended from the full subcategory $\{\ul{1}_+\} \subset \Fin_*^\xint$.
\end{lem}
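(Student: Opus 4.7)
The plan is to use the pointwise formula for right Kan extensions to directly identify the canonical comparison map with the Segal map of \cref{eqn:Segal-condition}. Writing $\iota\colon \{\ul{1}_+\} \hookrightarrow \Fin_*^\xint$ for the inclusion, the definition of being right Kan extended from $\{\ul{1}_+\}$ is that the unit $M_{|\Fin_*^\xint} \to \mathrm{Ran}_\iota M_{|\{\ul{1}_+\}}$ is an equivalence, and the pointwise formula presents its value at $A_+$ as
\[
(\mathrm{Ran}_\iota M_{|\{\ul{1}_+\}})(A_+) \;\simeq\; \lim_{\mathcal{I}_A} M(\ul{1}_+),
\]
where $\mathcal{I}_A \coloneq (A_+ \downarrow \iota)$ is the comma $\infty$-category of inert maps from $A_+$ to $\ul{1}_+$.

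The main computational step is to show that $\mathcal{I}_A$ is the discrete category on the set $A$. First, I would note that an inert morphism $f\colon A_+ \to \ul{1}_+$ is pointed (so $f(\infty)=\infty$) with $|f^{-1}(1)|=1$, and such maps are therefore in bijection with elements $a\in A$ via $a\mapsto \rho_a$. Next, I would check that a morphism $\rho_a \to \rho_{a'}$ in $\mathcal{I}_A$ is an inert endomorphism $g$ of $\ul{1}_+$ with $g\circ\rho_a = \rho_{a'}$; but pointedness forces $g(\infty)=\infty$ and inertness forces $g(1)=1$, so $g=\mathrm{id}$. Hence such a morphism exists only when $a=a'$, and is then unique, exhibiting $\mathcal{I}_A$ as discrete on $A$.

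Combining the two steps yields $(\mathrm{Ran}_\iota M_{|\{\ul{1}_+\}})(A_+) \simeq \prod_{a\in A} M(\ul{1}_+)$, and unravelling the unit map identifies it with $(\rho_a)_{a\in A}\colon M(A_+) \to \prod_{a\in A} M(\ul{1}_+)$. Requiring this to be an equivalence for all $A_+$ is then simultaneously the definition of right Kan extension and of the Segal condition, giving the desired equivalence of the two conditions. The only non-routine step is the identification of $\mathcal{I}_A$ as discrete, which reduces to elementary bookkeeping with pointed finite sets; the rest is a formal consequence of the pointwise formula for right Kan extensions in $\Scal$.
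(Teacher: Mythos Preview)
The paper does not give its own proof of this lemma; it is simply stated with a citation to \cite[Lemma 2.9]{patterns1}. Your argument is correct and is precisely the expected one: the pointwise formula for right Kan extension reduces the question to identifying the comma category $(A_+ \downarrow \iota)$, and your computation that this is the discrete set $A$ (via $a \mapsto \rho_a$) is accurate, as is the identification of the unit map with the Segal map $(\rho_a)_{a\in A}$.
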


\subsection{Higher categorical targets}\label{subsec:higher-targets}

\subsubsection{Chain complexes and cohomological field theories.}
Let $\Ch_\Qbb$ denote the derived category of $\Qbb$, i.e.~the \category{} obtained by inverting quasi-isomorphisms in the $1$-category of chain complexes over $\Qbb$.
Then a $\Ch_\Qbb$-valued TFT is a symmetric monoidal functor
\[
    \Zcal \colon \Bord_2 \too \Ch_\Qbb.
\]
We will see later that $\Zcal(S^1)$ is in particular an $\EtwoSO$-algebra.
This functor in particular induces maps
\[
    \Zcal_{g,k}\colon B\Diff_\partial(\Sigma_{g,k}) \subset \Map_{\Bord_2}(\emptyset, \sqcup_k S^1)
    \too \Map_{\Ch_\Qbb}(\Qbb, \Zcal(S^1)^{\otimes k})
\]
The mapping spaces $\Map_{\Ch_\Qbb}(\Qbb, C)$ in $\Ch_\Qbb$ are Eilenberg-Mac Lane spaces on (the connective part of) the homology of $C$, so we can interpret these maps as cohomology classes:
\[
    \Zcal_{g,k} \in H^*(B\Diff_\partial(\Sigma_{g,k}); \Zcal(S^1)^{\otimes k}).
\]
This is closely related to the notion of \emph{cohomological field theory} introduced by Witten \cite{Witten-cohFT}.
In the mathematical formulation by Kontsevich--Manin \cite[Definition 2.2]{KontsevichManin1994} such a cohomological field theory roughly consists of the cohomology ring of a variety $A=H^*(V;\Qbb)$ and maps
\[
    I_{g,n}\colon A^{\otimes n} \too H^*(\Mbar_{g,n};\Qbb)
\]
for all $g,n$ with $n+2g\ge 3$, subject to various conditions.
The key difference here is that the Deligne--Mumford compactification $\Mbar_{g,n}$ appears, rather than the moduli space $\Mcal_{g,n} \simeq B\Diff(\Sigma_{g,n})$.
This difference can be made up for by establishing a pushout square of modular \operads{}
\[\begin{tikzcd}
    B\SO(2) \ar[r] \ar[d] \ar[dr, phantom, very near end, "\ulcorner"] & \Mcal \ar[d] \\
    * \ar[r] & \Mbar .
\end{tikzcd}\]
A version of this for (a specific model of) \properads{} was established by Deshmukh \cite{Des22}, and this ought to be equivalent to a similar pushout of modular \operads{} in the framework outlined in these notes.
Given such a pushout we can apply the symmetric monoidal envelope $\Env(-)$ discussed in \cref{thm:envelope} to obtain a pushout square of symmetric monoidal \categories{}
\[\begin{tikzcd}
    \Free(B\SO(2)) \ar[r] \ar[d] \ar[dr, phantom, very near end, "\ulcorner"] & \Env(\Mcal) = \Bord_2 \ar[d] \\
    \Free(*) \ar[r] & \Env(\Mbar) .
\end{tikzcd}\]
Extra care has to be taken to ensure the stability condition $n+2g\ge 3$.
Here $\Env(\Mbar)$ is a version of the bordism category $\Bord_2$, but where the mapping spaces are built out of the moduli spaces $\Mbar_{g,n}$ instead of $\Mcal_{g,n}$.
Then, a cohomological field theory would correspond to a symmetric monoidal functor
\[
    \Zcal\colon \Env(\Mbar) \too \Ch_\Qbb
\]
and the above pushout square would allow us to describe such functors are $\Ch_\Qbb$-valued TFTs together with a trivialization of the $S^1$-action on $\Zcal(S^1)$.

\subsubsection{Invertible TFTs and the GMTW-Theorem.}
A TFT is called \hldef{invertible} if $\Zcal(M) \in \Ccal$ is invertible under the tensor product and $\Zcal$ sends all morphisms to equivalences, see e.g.~\cite{SchommerPries2018}.
Invertible TFTs play an important role in theoretical physics, as they can both be used to describe low-energy states of gapped systems in condensed matter physics, and they appear as the anomaly of quantum field theories, see e.g.~the introduction of \cite{FreedHopkins}.

A TFT $\Zcal$ is invertible if and only if it lands in the Picard groupoid of $\Ccal$, which is defined as the full subgroupoid $\mrm{Pic}(\Ccal) \subset \Ccal^\simeq$ of the core on the $\otimes$-invertible objects.
Thus, to study invertible TFTs it suffices to study functors from $\Bord_d$ to Picard groupoids.
There is an adjunction
\[
    |-|^{\rm gp} \colon \SM \adj \Omega^\infty\text{-}\mrm{Spaces} 
\]
where the right adjoint identifies the category of infinite loop spaces with the full subcategory of $\SM$ on the Picard groupoids.
Thus, to study $d$-dimensional TFTs we only need to know the infinite loop space $|\Bord_d|$. (This is automatically group-like, so we don't need to apply the group completion $(-)^{\rm gp}$.)
This infinite loop space was described by Galatius--Madsen--Tillmann--Weiss \cite{GMTW} using a parametrized Pontryagin--Thom construction as
\[
    |\Bord_d| \simeq \Omega^{\infty-1} \mrm{MTSO}(d).
\]
Thus, invertible $d$-dimensional TFTs can be described in terms of maps out of the connective spectrum $\tau_{\ge0}\Sigma\mrm{MTSO}(d)$.
A more sophisticated version of this approach, allowing for tangential structures, reflection positivity, and fully extended TFTs, is used to great effect by Freed--Hopkins \cite{FreedHopkins} to make computations in condensed matter physics.

\subsubsection{Bordism categories.}
Another source of $2$-dimensional TFTs is by \hldef{dimensional reduction} from higher dimensional TFTs.
Suppose we have a $3$-dimensional TFT $\Zcal$ valued in $\Ccal$, then we obtain a $2$d TFT as the composite
\[
    \Bord_2 \xtoo{-\times S^1} \Bord_3 \xtoo{\Zcal} \Ccal
\]
which is also sometimes called the $S^1$-compactification of $\Zcal$.
This can be a physically meaningful operation, but it can also be a useful tool for studying the TFT $\Zcal$. (This strategy is for example used extensively in \cite{SchommerPries2018}.)
Note that this reduction gives us more than just a $2$d TFT:
since we can act on $S^1$ by rotations, we in fact get a $2$d TFT with $\SO(2)$-action.
More generally, this construction thus defines for every closed $n$-manifold $M$ a functor
\[
    \Fun^\otimes(\Bord_d, \Ccal) \too \Map(B\Diff(M), \Fun^\otimes(\Bord_{d-n}, \Ccal)).
\]

Motivated by this, we can ask if there are any other ways of doing a dimensional reduction, which do not come from crossing with a manifold.
\begin{question}
    Are there any symmetric monodial functors
    \[
        \Bord_{d-n} \too \Bord_d
    \]
    that are not of the form $-\times M$ for some closed $n-d$?
\end{question}
Presumably such functors exist, but the author has no clue how one would go about constructing them.
One category level up, one can also ask what the automorphisms of the functor $-\times M$ are.
Certainly $\Diff(M)$ acts on the functor, but there might be more automorphisms that don't come from diffeomorphisms.
We can also ask this for $n=0$, in which case the question concerns the automorphisms of $\Bord_d$.
This is an important group as it acts on the space of all $d$-dimensional TFTs, but as far as the author is aware, very little is known about it.
As a consequence of the main theorem of these lectures one can determine this automorphism group in dimension $2$:
\begin{cor}
    The identity and orientation reversal automorphism of $\Bord_2$ induce an equivalence
    \[  
        \Ctwo \xtoo{\simeq} \Aut_{\SM}(\Bord_2).
    \]
\end{cor}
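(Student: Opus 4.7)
The plan is to translate the statement into one about automorphisms of the modular $\infty$-operad of surfaces $\mathcal{M}$, and then exploit the genus filtration that is the main technical tool of the lectures. Invoking the main theorem (that $\mathrm{Bord}_2 \simeq \mathrm{Env}(\mathcal{M})$) together with the adjunction $\mathrm{Env} \dashv U$ between modular $\infty$-operads and symmetric monoidal $\infty$-categories, the full faithfulness of $\mathrm{Env}$ on the relevant subcategory identifies
\[
\mathrm{Aut}_{\mathrm{SM}}(\mathrm{Bord}_2) \;\simeq\; \mathrm{Aut}_{\mathrm{ModOp}}(\mathcal{M}),
\]
so it suffices to compute the automorphism space of $\mathcal{M}$ as a modular $\infty$-operad. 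The $C_2$-action on the left-hand side corresponds under this equivalence to the $C_2$-action on $\mathcal{M}$ that acts on each mapping class group $\pi_0 \mathrm{Diff}(\Sigma_{g,n})$ by conjugating with an orientation-reversing diffeomorphism.

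Next I would set up the genus filtration $\mathcal{M}^{\le 0} \hookrightarrow \mathcal{M}^{\le 1} \hookrightarrow \cdots$, where $\mathcal{M}^{\le g}$ is the sub-modular-$\infty$-operad generated by surfaces of genus $\le g$, and compute $\mathrm{Aut}(\mathcal{M}^{\le 0})$ first. The genus $0$ part is essentially the cyclic $\infty$-operad whose space of $n$-ary operations is $B\mathrm{Diff}_\partial(\Sigma_{0,n+1})$, i.e.\ (for $n \ge 2$) a $K(\pi,1)$ for the pure mapping class group of the $(n+1)$-holed sphere. An operadic automorphism is a compatible system of self-equivalences of these classifying spaces commuting with gluing. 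Because the low-arity pieces ($n=2,3$) are contractible or built from simple braid-type groups, the rigidity of spherical pure mapping class groups under operadic constraints (essentially: the only non-trivial outer symmetry respecting all gluings is the reflection that reverses the cyclic order of punctures) pins the answer down to $C_2$, generated by orientation reversal.

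The inductive step is then to show that $\mathrm{Aut}(\mathcal{M}^{\le g-1}) \to \mathrm{Aut}(\mathcal{M}^{\le g})$ is an equivalence for every $g \ge 1$. This is exactly the sort of obstruction-theoretic input advertised in the abstract: $\mathcal{M}^{\le g}$ is obtained from $\mathcal{M}^{\le g-1}$ by a pushout in modular $\infty$-operads attaching the new genus-$g$ cells, and automorphisms extend uniquely provided the relevant derivation/obstruction space vanishes. Concretely, the new generator in each arity $n$ contributes a factor whose automorphism space (as a pointed space over the already-fixed lower-genus structure) is controlled by $\pi_0\mathrm{Diff}(\Sigma_{g,n})$ together with the outer action by the previously fixed part; a Dehn twist / boundary argument shows that the centre of $\pi_0\mathrm{Diff}_\partial(\Sigma_{g,n})$ is already hit by $\mathcal{M}^{\le g-1}$, leaving no new symmetry to choose.

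The main obstacle will be this last step — verifying that the extension/derivation spaces really vanish. Concretely, one needs to know that the mapping class group of a positive-genus surface admits no non-trivial outer automorphisms compatible with all gluings from $\mathcal{M}^{\le g-1}$; this is the operadic enhancement of the Ivanov–Farb–Margalit rigidity of mapping class groups, and is the genuinely non-formal content. Once this is in place, the inductive colimit gives $\mathrm{Aut}(\mathcal{M}) \simeq \mathrm{Aut}(\mathcal{M}^{\le 0}) \simeq C_2$, yielding the claimed equivalence.
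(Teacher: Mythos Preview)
The paper does not actually supply a proof of this corollary: it appears in the introductory section with only the sentence ``As a consequence of the main theorem of these lectures one can determine this automorphism group in dimension $2$,'' and no argument is given. So there is no proof to compare against; your strategy of passing to $\Aut_{\ModOp}(\Mcal)$ and then climbing the genus filtration using \cref{thm:genus-extension} is exactly the intended route.

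That said, your sketch has real gaps. First, the reduction $\Aut_{\SM}(\Bord_2)\simeq\Aut_{\ModOp}(\Mcal)$ does not follow from the adjunction alone: the paper only establishes that $\Env$ is fully faithful \emph{after slicing over $\Csp$}, and $\Udual(\Bord_2)$ is strictly larger than $\Mcal$ (it contains disconnected bordisms as operations). You need to argue separately that any symmetric monoidal automorphism of $\Bord_2$ preserves connected morphisms; one way is via the exercise immediately following the corollary, which shows that every such automorphism induces the identity on $h(\Bord_2)$ and hence preserves diffeomorphism types. Second, your base case (``rigidity of spherical pure mapping class groups under operadic constraints'') is too vague to count as an argument; you are really computing $\Aut_{\CycOp}(\EtwoSO)$, and this needs either a direct analysis or the description via \cref{thm:cyclic} as automorphisms of the pair $(\EtwoSO,\omega_{\EtwoSO})$. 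Third, and most seriously, the inductive step via \cref{thm:genus-extension} reduces to showing that the disk-presheaf $E_{\Sigma_g}\sslash\Diff(\Sigma_g)$ has no nontrivial automorphisms in $(\RMod_{\EtwoSO})_{L_g\Mcal/}$; this is a statement about right modules, not directly about $\mrm{Out}(\Gamma_g)$, and your appeal to Ivanov--Farb--Margalit rigidity does not obviously translate. This last point is the genuine content and is not addressed in your outline.
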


\begin{exc}
    Consider the $1$-category $h(\Bord_2)$ where morphisms are diffeomorphism classes of bordisms.
    Show that the orientation reversal automorphism of $\Bord_2$ becomes isomorphic to the identity as a functor $h(\Bord_2) \to h(\Bord_2)$.
    Use the classification from \cref{thm:folk-2D-TQFT} (suitably adapted to general $1$-categorical targets) to show that $\Aut(h(\Bord_2)) \simeq *$.
\end{exc}

\section{Modular \captioninfty-operads}

The goal of this lecture is to introduce modular operads, which encode the gluing operations of connected surfaces.
The idea of modular operads was first conceived by Getzler--Kapranov \cite{GetzlerKapranov1998} who introduced what we would call ``modular operads enriched in chain complexes''%
\footnote{
    Also note that their definition comes with a genus-grading built in and that they require a ``stability condition'', which for example ensures that no disks appear in their modular operad.
    We'll encounter genus gradings later, but we'll not be imposing a stability condition.
}
in order to study the cohomology of the moduli spaces of curves $\Mcal_{g,k}$ and its compactification $\overline{\Mcal}_{g,k}$.
In particular, for them as for us the key example of a modular operad comes from surfaces.
(Note that we will not discuss enrichment here, but the theory presented easily generalizes to the enriched world thanks to \cite{patterns3}.)
The notion of modular $\infty$-operad, i.e.~modular operads enriched in ``spaces'', was first introduced by Hackney--Robertson--Yau \cite{HRY-graphical-ModOp} and the approach we present here is equivalent to theirs.
The only difference is that we give an alternative, but equivalent, definition of the graph category $\Gr$ (denoted $\mbf{U}^\op$ in their work).
Moreover, we in \cref{subsec:mfd-modular-operad} sketch a construction of the manifold modular operad $\Bcal_d$, which seems to be new in this form.%
\footnote{
    Though it of course is modelled on Getzler--Kapranov's modular operad.
    Also, Basualdo-Bonatto--Robertson already have a construction of a modular $\infty$-operad analogous to $\Bcal_2$.
}

The key idea behind modular operads is that they should model the algebraic structure that the moduli spaces of connected surfaces $B\Diff_\partial(\Sigma_{g,k})$ have under gluing boundary circles.
The elementary operations are the gluing maps
\[
    B\Diff_\partial(\Sigma_{g_1,k_1}) \times  B\Diff_\partial(\Sigma_{g_2,k_2}) \to
    B\Diff_\partial(\Sigma_{g_1+g_2,k_1+k_2-2}) 
    \quad\text{and}\quad
    B\Diff_\partial(\Sigma_{g,k}) \to B\Diff_\partial(\Sigma_{g+1,k-2}),
\]
but to keep track of compatibilities between these operations we will in fact consider more elaborate gluing maps where the combinatorics of the gluing are prescribed by a graph.
\begin{defn}\label{defn:cut-system}
    Let $M$ be a connected compact oriented $d$-manifold with boundary.
    A \hldef{cut-system} in $M$ is a codimension $1$ submanifold $S \subset M$ that is two-sided/coorientable and contains the boundary of $M$.
    To such cut-system we can assign a \hldef{dual graph $\Delta(M,S)$} whose vertices are the path components of $M \setminus S$ and whose edges are the components of $S$.
\end{defn}

\begin{figure}[ht]
    \centering
    \includegraphics[width = \linewidth]{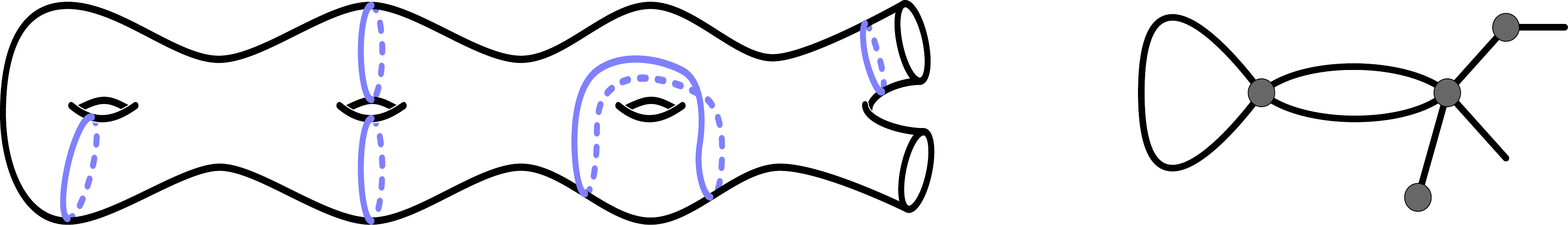}
    \caption{A connected surface $\Sigma$ with a cut system $S \subset \Sigma$ and its dual graph $\Delta(\Sigma,S)$.
    Note that dual graphs are connected, but can have loops, double-edges and univalent vertices.}
    \label{fig:dual-graph}
\end{figure}

\subsection{Graphs}

We now set up the precise combinatorial framework that we will use for describing the gluing of surfaces.
\begin{defn}
    A \hldef{graph} is a $4$-tuple $(V,A,\dagger,r)$ where $V$ and $A$ are finite sets, $\dagger \colon A \to A$ is a fixed-point free involution, and $t\colon A \to V_+$.
    We refer to $V$ as the set of vertices and $A$ as the set of arcs or half-edges.
    We will require all graphs to be \hldef{connected} in the sense that they are non-empty and cannot be written as a disjoint union.
\end{defn}

Equivalently, a graph is connected if the equivalence relation on $A \sqcup V$ generated by
$a \sim a^\dagger$ for $a \in A$ and $a \sim t(a)$ for $a \in t^{-1}(V)$
has only one equivalence class.

\begin{defn}
    We say that a graph is \hldef{elementary} if it is isomorphic to one of the following.
    \begin{itemize}
        \item The \hldef{edge} $\hldef{\fre} \coloneq (\emptyset, \{a, a^\dagger\})$, or
        \item The \hldef{$k$-corolla} $\hldef{\frc_k} \colon (\{v\}, \{a_1,a_1^\dagger, \dots, a_k, a_k^\dagger\})$ for some $k \in \Nbb$ where $r(a_i) = v$ and $t(a_i^\dagger) = \infty$, 
        including the \hldef{isolated vertex} $\frc_0 = (\{v\}, \emptyset)$.
    \end{itemize}
    Equivalently, a graph $(V,A)$ is elementary if and only if $V$ has at most one element and for every $a \in A$ we have $\infty \in \{t(a),t(a^\dagger)\}$.
\end{defn}

\begin{example}\label{ex:linear-graph}
    For all $n\ge 0$ the \hldef{linear graph} $\mfr{l}_n$ is defined as
    \[
        \mrm{l}_n \coloneq (\{v_1,\dots,v_n\}, \{a_0,a_0^\dagger, \dots, a_n,a_n^\dagger\}))
        \text{ where }
        t(a_i) = \begin{cases}
            v_i & \text{ if } i>0, \\
            \infty & \text{ if } i=0,
        \end{cases}
        \text{ and }
        t(a_i^\dagger) = \begin{cases}
            v_{i+1} & \text{ if } i<n, \\
            \infty & \text{ if } i=n.
        \end{cases}
    \]
\end{example}

\begin{defn}
    To each graph $\Gamma = (V,A)$ we assign a topological space, called its \hldef{compactification} defined by
    \[
        \Gamma^+ \coloneq \left( V_+ \sqcup A \times [0,1] \right) / ( (a,0) \sim t(a) \text{ and } (a,s) \sim (a^\dagger,1-s))
    \]
    This is a finite $1$-dimensional pointed CW complex with $0$ skeleton $V_+$ and base point $\infty$.
\end{defn}

\begin{figure}[ht]
    \centering
    \includegraphics[width = .6\linewidth]{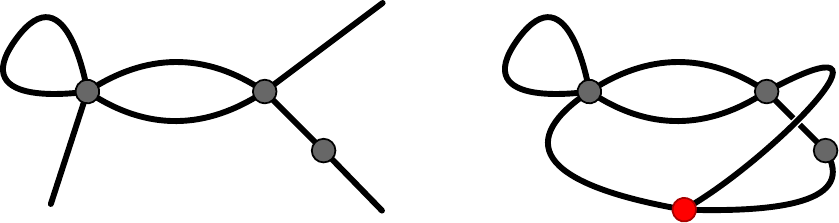}
    \caption{A graph $(X,V)$ and its one-point compactification $(X^+,V^+)$ with $\infty$ labelled in red.}
    \label{fig:compactification}
\end{figure}

\begin{rem}
    In fact, there's a bijection between isomorphism classes of graphs and isomorphism classes of finite $1$-dimensional pointed CW complexes that have the property that they are connected after removing the base point.
\end{rem}

\begin{war}\label{war:nodeless-loop}
    The description of graphs through their compactification suggests that we might want to add another graph $\frs$ to our category $\Gr$ such that $\frs^+ = S^1 \sqcup\{\infty\}$.
    This would be a graph with no vertices and a single edge that loops back onto itself; called the \hldef{nodeless loop}.
    Such a graph cannot be expressed using the combinatorial framework we have chosen, but it makes sense topologically, if we define a graph to be anything that ``locally looks like $\fre$ or $\frc_k$''.
    (A concrete definition would characterize $\Gamma^+$ as those pairs $(X,V^+,\infty)$ of pointed compact Hausdorff spaces where $V$ is finite and $X \setminus V^+$ is a $1$-manifold.)
    Adding this nodeless loop will yield a new category $\Gr^{\frs}$ that contains $\Gr$ (\cref{defn:graph-map}) as a full subcategory.
    This new category has better categorical properties, in particular it is an \emph{extendable algebraic pattern} in the sense of \cite[Definition 8.5]{patterns1}, which is useful for computing free modular operads.
    The downside is that $\Gr^{\frs}$ is no longer a $1$-category, but rather an \category{}, as for example $\Aut_{\Gr^\frs}(\frs) = \mrm{O}(2)$.
    Hackney--Robertson--Yau introduce such a variant of their graph category $\widetilde{U}$ \cite[Section 4.1]{HRY-graphical-ModOp}, which is a $1$-category and thus will be equivalent to $h(\Gr^{\frs})^\op$.
    However, this does not make $\widetilde{U}^\op$ into an extendable pattern.
    For the purpose of these lectures we'll simply ignore the nodeless loop and work with the smaller category $\Gr$, as it turns out that $\Gr$ and $\Gr^\frs$ result in equivalent notions of modular \operads.
\end{war}

\subsubsection{Graph maps -- topological.}

\begin{defn}\label{defn:graph-map}
    A \hldef{graph map} $f \colon \Gamma \to \Lambda$ is a continuous map $f\colon \Gamma^+ \to \Lambda^+$ between one-point compactifications such that
    \begin{enumerate}[(1)]
        \item $f(V_\Gamma^+) \subset V_\Lambda^+$ and $f(\infty) = \infty$, 
        \item $f^{-1}(\Lambda^+ \setminus V_\Lambda^+) \to \Lambda^+ \setminus V_\Lambda^+$ is a homeomorphism, and 
        \item $f^{-1}(v)$ is connected for all $v \in V_\Lambda$.
    \end{enumerate}
    We let $\hldef{\Gr}$ denote the category whose objects are connected graphs and morphisms are isotopy classes of graph maps.
    (An isotopy of graph maps is a continuous $[0,1]$-family of graph maps.)
\end{defn}

\begin{rem}
    If we drop the third condition we obtain a more general category $\GR$.
    This $\GR$ seems to be the appropriate generality in which to develop the general theory discussed here, but its corresponding algebraic structure are ``circuit algebras'' / ``non-connected modular operads'', which will not turn up in this lecture series.
    So we will restrict to $\Gr$ throughout.
\end{rem}

Note that a graph map $f\colon \Gamma \to \Lambda$ induces a map of vertices $f_{\rm vert} \colon V_\Gamma^+ \to V_\Lambda^+$ and a map of arcs (by taking preimages) $A_\Gamma \leftarrow A_\Lambda\cocolon f_{\rm arc}$.
These induce functors
\[
    V^+\colon \Gr \too \Fin_* 
    \qqand
    A\colon \Gr^\op \too \Fin^{\BCtwo}.
\]

The idea that graph maps are isotopy classes of continuous maps might be a little unsettling at first, but it turns out that (at least for connected graphs) the data of a graph map is determined entirely combinatorially.
\begin{lem}\label{lem:isotopy-class}
    The combined functor
    \[
        (V^+, A)\colon \Gr \too \Fin_* \times (\Fin^{\BCtwo})^\op
    \]
    is faithful.
    In other words, two graph maps are isotopic if and only if they induce the same map on vertices and on arcs.
\end{lem}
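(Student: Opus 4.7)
My plan is to reduce every graph map to a ``standard form'' determined by its combinatorial shadow $(f_{\rm vert}, f_{\rm arc})$, and then exhibit an explicit isotopy between two standard maps sharing this data.

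First, I would argue that every graph map $f\colon \Gamma \to \Lambda$ is isotopic to one where each topological edge $e = \{a, a^\dagger\}$ of $\Gamma$ either (a) is collapsed by $f$ to a single point of $V_\Lambda^+$ (namely the common image of its two endpoint-vertices), or (b) is mapped by $f$ homeomorphically onto a whole edge of $\Lambda$, carrying endpoint-vertices of $\Gamma$ to endpoint-vertices of $\Lambda$. Condition~(2) of \cref{defn:graph-map} already tells us that $f^{-1}(\Lambda^+ \setminus V_\Lambda^+)$ is a disjoint union of open intervals sitting inside the open edges of $\Gamma$, each mapping homeomorphically onto an edge of $\Lambda$; these intervals can however in principle be proper sub-arcs, with the complementary sub-arcs mapped constantly to vertices of $\Lambda$. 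A reparametrization isotopy on each edge of $\Gamma$ pushes such a sub-arc to either exhaust the full edge or shrink to a point, which is the desired standardization. Since the isotopy is supported inside the open edges of $\Gamma$, it leaves $(f_{\rm vert}, f_{\rm arc})$ and condition~(3) unaffected.

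Next, for a standard graph map the combinatorial data $(f_{\rm vert}, f_{\rm arc})$ determines $f$ up to reparametrization of the non-collapsed edges: the image of $f_{\rm arc} \colon A_\Lambda \to A_\Gamma$ singles out the non-collapsed edges of $\Gamma$ and, for each such edge, the values of $f_{\rm arc}$ on the two arcs of the target edge in $\Lambda$ pin down the orientation of the homeomorphism and its endpoint behaviour; the remaining edges are collapsed, and the map there is the constant map to the vertex prescribed by $f_{\rm vert}$. Given two graph maps $f, g$ with $(f_{\rm vert}, f_{\rm arc}) = (g_{\rm vert}, g_{\rm arc})$, we may therefore replace them by standard representatives, and on each non-collapsed edge $e$ both $f|_e$ and $g|_e$ become orientation-preserving homeomorphisms $[0,1] \to [0,1]$ fixing the endpoints. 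The space of such homeomorphisms is contractible (it is convex in the vector space of monotone functions), so straight-line interpolation of the inverse functions gives a canonical isotopy $f|_e \simeq g|_e$ rel endpoints. Combining this with the constant isotopy on the collapsed edges and on $V_\Gamma^+$, the pieces fit together into a continuous isotopy $f \simeq g$ through graph maps, since the combinatorial data and condition~(3) remain constant throughout.

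The main obstacle is the first step: carefully justifying why any arrangement of open-interval preimages inside an edge of $\Gamma$ can be standardized by an isotopy, and verifying that this local manipulation can be performed coherently over all edges while remaining continuous in neighbourhoods of the vertices (in particular near $\infty$, where infinitely-many legs meet). Once standardization is in place, the rest of the argument is a transparent consequence of the contractibility of the space of orientation-preserving self-homeomorphisms of the interval.
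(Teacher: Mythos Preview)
The paper does not actually supply a proof of this lemma; it is stated as a motivational fact in the transition to the purely combinatorial definition of graph maps, and then the equivalence of the two definitions is asserted as a separate lemma (also without proof). So there is nothing to compare your argument against directly, but your attempt still deserves scrutiny on its own terms.

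Your standardisation step has a genuine gap: you claim that after an isotopy each edge of $\Gamma$ is either collapsed to a point or mapped homeomorphically onto a \emph{single} edge of $\Lambda$. This dichotomy is false for graph maps that subdivide. Condition~(2) tells you that $f^{-1}(\Lambda^+\setminus V_\Lambda^+)$ is a disjoint union of open intervals, one per edge of $\Lambda$, each contained in an open edge of $\Gamma$ --- but several of these intervals can lie in the \emph{same} edge of $\Gamma$, separated by points that $f$ sends to bivalent vertices of $\Lambda$ not hit by $f_{\rm vert}$. Concretely, the active map $\fre \to \mfr{l}_n$ (or the unit $\fre \actto \frc_2$) sends the single edge of $\fre$ onto a path of $n$ edges in $\Lambda$; no reparametrisation of $\fre$ can make this a homeomorphism onto one edge. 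Your phrase ``pushes such a sub-arc to either exhaust the full edge or shrink to a point'' implicitly assumes there is at most one sub-arc per edge of $\Gamma$, and that is precisely what fails.

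The fix is straightforward and your overall strategy survives. The correct standard form on a fixed edge $e$ of $\Gamma$ is: if $f_{\rm arc}^{-1}(e)$ consists of $n$ arcs of $\Lambda$ (forming a bivalent path, as in condition~(3) of the combinatorial definition), then subdivide $e \cong [0,1]$ into $n$ equal pieces and map the $i$th piece affinely onto the $i$th edge of that path; if $n=0$, collapse $e$. One then argues, exactly as you do, that two such standard maps with the same $(f_{\rm vert}, f_{\rm arc})$ agree, and that any graph map can be isotoped to standard form by a reparametrisation of each edge of $\Gamma$ (now allowing the reparametrisation to move the interior preimages of vertices of $\Lambda$). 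The contractibility argument at the end goes through unchanged, applied piecewise.
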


\begin{figure}[ht]
    \centering
    \includegraphics[width = .8\linewidth]{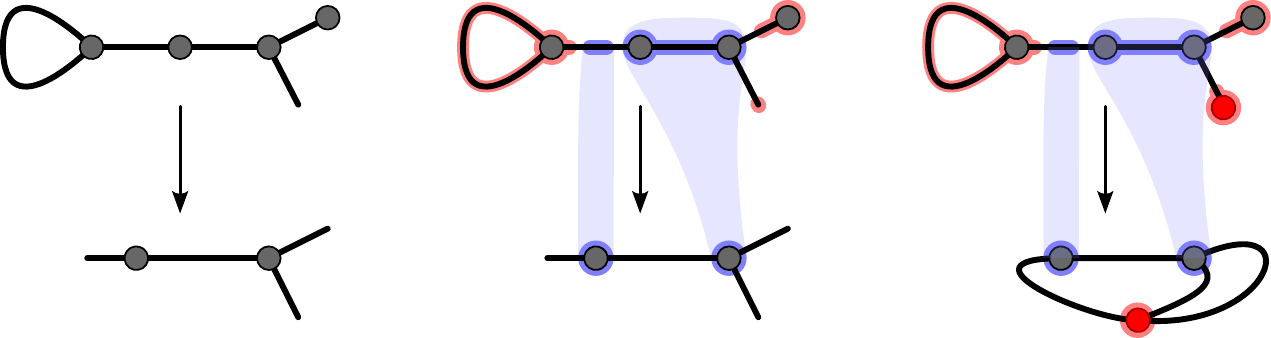}
    \caption{
    Three pictures of the same graph map $(X,V) \to (Y,W)$.
    First unlabelled, second with the preimages of vertices in blue and the preimages of infinity in red, and third the continuous map $X^+ \to Y^+$ with the preimages of infinity in red.}
    \label{fig:three-maps}
\end{figure}

\subsubsection{Graph maps -- combinatorial.}
By \cref{lem:isotopy-class} graph maps are entirely determined by combinatorial data. 
Consequently, it must be possible to define them without ever making reference to the topological space $\Gamma^+$.
We'll discuss this combinatorial definition here and leave it up to the reader to chooses which of the equivalent definitions they work with.

In order to define graph maps we will need the notion of a path in a graph.
Fix a graph $\Gamma = (V,A)$ and two vertices $v_a, v_b \in \Gamma$.
A \hldef{path} in $\Gamma$ from $v_{\rm start}$ to $v_{\rm end}$ is a finite sequence of arcs $(a_1,\dots, a_n)$ such that
    \[
        v_{\rm start} = t(a_1^\dagger), \,
        t(a_1) = t(a_2^\dagger), \,
        \dots \,
        t(a_{n-1}) = t(a_n^\dagger), \,
        t(a_n) = v_{\rm end}.
    \]
If $v_{\rm start} = v_{\rm end}$ we allow the empty path.
We say that a path is \hldef{bivalent} if each of the vertices $t(a_i)$ is bivalent or $\infty$ for all $1\le i <n$.

\begin{defn}
    A \hldef{graph map} $f\colon (V,A) \to (W,B)$ consists of two maps
    \[
        f_{\rm v}\colon V_+ \to W_+
        \qqand 
        A \longleftarrow B \cocolon f_{\rm a}
    \]
    subject to the following conditions
    \begin{enumerate}
        \item $f_{\rm v}(\infty) = \infty$,
        \item $f_{\rm a}(b^\dagger) = f_{\rm a}(b)^\dagger$ for all $b \in B$,
        \item $f_{\rm a}^{-1}(a)$ admits a (unique) ordering making it a bivalent path from $f_{\rm v}(t(a))$ to $f_{\rm v}(t(a^\dagger))$ in $B$,
        \item\label{it:bivpath} if $f_{\rm v}(v) = f_{\rm v}(v') \neq \infty$ for some $v,v' \in V$, then there is a path from $v$ to $v'$ in $(V,A)$ that uses only half-edges in $A\setminus f_{\rm h}(B)$.
        \item each $w \in W_+ \setminus f_{\rm v}(V^+)$ is bivalent and appears in exactly one path as described in \ref{it:bivpath}.
    \end{enumerate}
\end{defn}

\begin{rem}
    The combinatorial definition here relies (just like \cref{lem:isotopy-class}) on the assumption that the graphs are connected, but this can be generalized by instead encoding $f_{\rm a}$ as a map in $\Assoc$, i.e.~by remembering total orders on its preimages.
\end{rem}

\begin{lem}
    Both the topological and the combinatorial definition yield well-defined categories, and these categories are equivalent to each other and to the category $\mathbf{U}^\op$ of \cite{HRY-graphical-ModOp}.
\end{lem}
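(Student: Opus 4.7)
The statement has three parts: (a) both definitions produce well-defined categories, (b) these categories are equivalent, and (c) they agree with $\mathbf{U}^\op$ of \cite{HRY-graphical-ModOp}. Since all three share the same objects, the comparisons will be about Hom-sets.

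For (a), in the topological setting the composite $g \circ f \colon \Gamma^+ \to \Theta^+$ of two graph maps is continuous, and I would verify the three defining conditions: condition (1) follows by composing the inclusions $V_\Gamma^+ \hookrightarrow V_\Lambda^+ \hookrightarrow V_\Theta^+$; for condition (2), condition (1) for $g$ forces $g^{-1}(\Theta^+ \setminus V_\Theta^+) \subset \Lambda^+ \setminus V_\Lambda^+$, on which $f$ is a homeomorphism onto an open subset and $g$ is a homeomorphism onto $\Theta^+ \setminus V_\Theta^+$, so the composite is a homeomorphism; for condition (3), $(g \circ f)^{-1}(w) = f^{-1}(g^{-1}(w))$ is a glueing of connected $f$-preimages of the vertices and arcs of the connected subgraph $g^{-1}(w)$, hence connected. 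Isotopy is an equivalence relation stable under composition, so $\Gr$ is well-defined. Combinatorially, composition is pointwise: $(g \circ f)_{\rm v} = g_{\rm v} \circ f_{\rm v}$ and $(g \circ f)_{\rm a} = f_{\rm a} \circ g_{\rm a}$; the key check is condition (3), where the $(g \circ f)_{\rm a}$-preimage of a source arc $a$ is the concatenation of the $g_{\rm a}$-preimages of the arcs in the bivalent path $f_{\rm a}^{-1}(a)$, which is again bivalent because interior joins sit at bivalent vertices of the concatenated path.

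For (b), I would build a natural bijection between topological graph maps modulo isotopy and combinatorial graph maps. Given a topological $f$, extract $f_{\rm v}$ by restriction to $V_\Gamma^+$ and define $f_{\rm a}\colon B \to A$ by sending a target arc $b$ to the source arc whose interior contains $f^{-1}(\mathrm{int}(b))$ (unique by condition (2) and connectedness); this is preserved under isotopy, yielding a functor whose faithfulness is exactly \cref{lem:isotopy-class}. For fullness, given combinatorial data $(f_{\rm v}, f_{\rm a})$ I construct a continuous representative by sending $V_\Gamma^+$ to $V_\Lambda^+$ via $f_{\rm v}$ and, for each source edge $\{a, a^\dagger\}$, mapping its $[0,1]$-cell via a monotone parametrization onto the concatenated path in $\Lambda^+$ determined by the bivalent path $f_{\rm a}^{-1}(a)$, which terminates at $f_{\rm v}(t(a))$ and $f_{\rm v}(t(a^\dagger))$ by condition (3). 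The topological conditions (1)--(3) then follow from the combinatorial axioms: in particular, condition (3) topologically is guaranteed by combinatorial conditions (4) and (5), which force the preimage of any target vertex to be a connected subgraph of the source built out of contracted arcs.

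For (c), the equivalence with $\mathbf{U}^\op$ is a dictionary translation: a morphism $\Gamma \to \Lambda$ in $\mathbf{U}^\op$ presents $\Lambda$ as obtained from $\Gamma$ by substituting connected subgraphs at each vertex (corresponding to the $f_{\rm v}$-preimages) and by bivalent subdivision of the remaining edges (corresponding to the bivalent paths $f_{\rm a}^{-1}(a)$). The main obstacle is the fullness step in (b): one must check that the parametrization choice does not affect the map up to isotopy---which follows from contractibility of the space of orientation-preserving self-homeomorphisms of $[0,1]$---and that the resulting continuous map genuinely satisfies condition (2) rather than accidentally sending some open sub-interval of an edge to a vertex. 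Once this is pinned down, parts (a) and (c) are combinatorial bookkeeping.
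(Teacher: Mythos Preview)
The paper does not prove this lemma; it is stated as a fact and the details are deferred (this is lecture-notes style, and the comparison with $\mathbf{U}^\op$ is really a reference to \cite{HRY-graphical-ModOp}). So there is no ``paper's own proof'' to match against. Your outline is a sensible plan for filling in the omitted verification, and you correctly identify \cref{lem:isotopy-class} as the crux of faithfulness in part~(b).

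One place where your sketch glosses over a genuine subtlety is the closure argument for condition~(3) under composition in the topological category. You write that $(g\circ f)^{-1}(w)$ is a ``glueing of connected $f$-preimages of the vertices and arcs of the connected subgraph $g^{-1}(w)$'', but it is not immediate from conditions (1)--(3) alone that $g^{-1}(w)$ is a \emph{subgraph} (vertices together with full closed edges): condition~(2) only tells you that $g^{-1}(\Theta^+\setminus V_\Theta^+)$ is open and homeomorphic to a disjoint union of open intervals, and one must argue that an open interval in a graph cannot pass through a vertex of valence $\neq 2$, so that each component of $g^{-1}(\Theta^+\setminus V_\Theta^+)$ is contained in a single edge interior of $\Lambda$. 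Once you know that, $g^{-1}(w)$ really is a closed subgraph and your glueing description goes through. A similar care is needed in the fullness step of~(b): your reconstruction of a continuous representative from $(f_{\rm v},f_{\rm a})$ must also handle the edges with $f_{\rm a}^{-1}(a)=\emptyset$, which get collapsed to a vertex rather than parametrised onto a path---this is exactly where combinatorial condition~(3) (that $f_{\rm a}^{-1}(a)$ is a bivalent path between $f_{\rm v}(t(a))$ and $f_{\rm v}(t(a^\dagger))$, possibly empty when these coincide) feeds in.

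Part~(c) as you have it is more of a slogan than an argument; a real proof would unpack the definition of morphism in $\mathbf{U}$ (an embedding--collapse pair, or equivalently an \'etale map of graphs in the sense of \cite{HRY-graphical-ModOp}) and exhibit the explicit bijection on Hom-sets. Since the paper itself simply cites \cite{HRY-graphical-ModOp} for this, your level of detail is arguably consistent with the paper's intent.
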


\begin{figure}[ht]
    \centering
    \includegraphics[width = \linewidth]{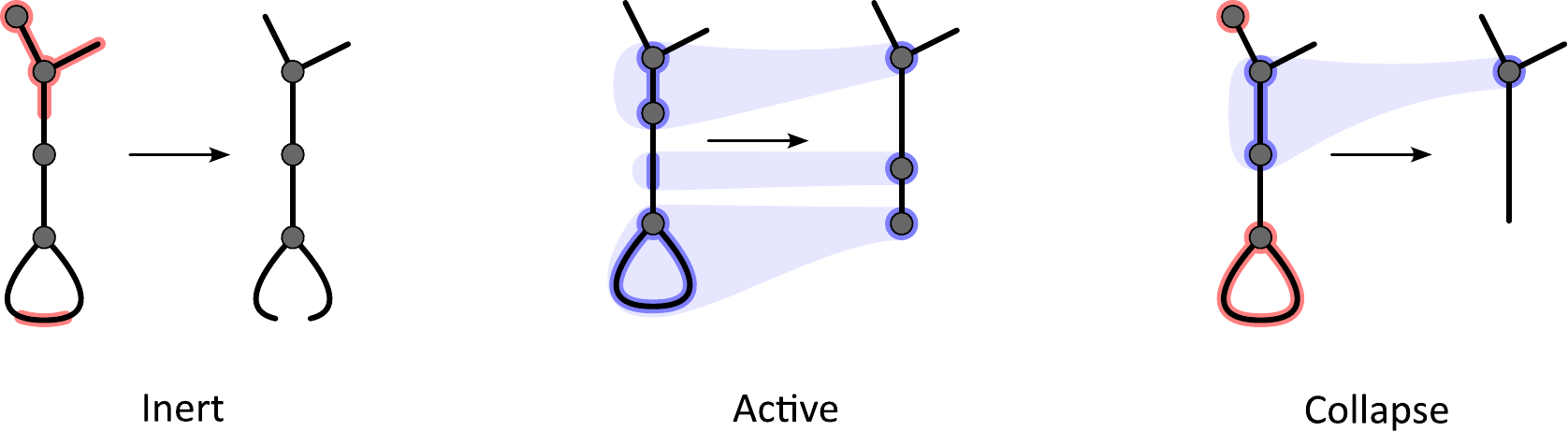}
    \caption{
        Three graph maps. The first two are inert and active, respectively.
        The third map is neither inert nor active, but it is a (quasi-)collapse in the sense of \cref{defn:quasi-collapse}
    }
    \label{fig:three-connected-maps}
\end{figure}

\subsection{Modular operads}
\subsubsection{The Segal condition -- one-coloured.}
We would like to define modular \operads{} as functors $\Gr \to \An$ satisfying a condition analogous to \cref{eqn:Segal-condition} in the definition of commutative monoids.
For this we need analogues of the maps $\rho_a\colon A_+ \to \ul{1}_+$ that project to individual elements.

For every vertex $v \in \Gamma$ let $A_v = r^{-1}(v) \subset A(\Gamma)$ be the set of half-edges incident at $v$.
Let $\frc_v$ be the corolla that has $A_v \times \{+,-\}$ as its set of arcs with $(a^+)^\dagger = a^-$, $r(a^+) = v$ and $r(a^-) = \infty$.
Then there is a canonical graph map
\[
    \rho_v\colon \Gamma \too \frc_v
\]
that sends $v$ to the unique vertex of $\frc_v$ and every other vertex to $\infty$, and on half-edges is the map $A_v \times \{+,-\} \to A$ that restricts to the inclusion of $A_v$ on $A_v \times \{+\}$.

\begin{defn}
    A functor $\Ocal\colon \Gr \too \An$ is a \hldef{one-coloured modular \operad} if for every graph $\Gamma$ the map
    \[
        (\rho_v)_{v \in V(\Gamma)} \colon \Ocal(\Gamma) \too \prod_{v \in V(\Gamma)} \Ocal(\frc_v)
    \]
    is an equivalence.
\end{defn}

\begin{example}
    Given a commutative monoid $M\colon \Fin_* \too \An$ as in \cref{defn:CMon} we can define a modular operad as the composite
    \[
        V_+^*M\colon \Gr \xtoo{V_+} \Fin_* \xtoo{M} \An
    \]
    where the first functor sends a graph to its set of vertices plus an additional base point.
\end{example}

\begin{example}\label{ex:grading-modular-operad}
    The \hldef{grading modular operad} $\gr\colon \Gr \to \Sets \subset \An$ is defined by assigning to each graph $\Gamma$ its set of possible labellings of vertices by natural numbers
    \[
        \gr(\Gamma) \coloneq \Map(V_\Gamma, \Nbb)
    \]
    and to a morphism $f\colon\Gamma \to \Lambda$ it assigns the map
    \begin{align*}
        \gr(f) \colon \Map(V_\Gamma, \Nbb) & \too \Map(V_\Lambda, \Nbb) \\
        \alpha & \longmapsto \left( v \in V_\Lambda \mapsto b_1(f^{-1}(v)) + \sum_{w \in W \cap f^{-1}(v)} \alpha(w) \right)
    \end{align*}
    that sums the weights over the fiber and adds the Betti number of the fiber.
    See \cref{fig:graded-graph-map} on page \pageref{fig:graded-graph-map} that describes the unstraightening of this functor.
\end{example}

\begin{exc}
    Show that $\gr$ is a well-defined functor and satisfies the Segal condition.
    Show that it is not isomorphic to $V^*M$ for any commutative monoid $M$.
    Consider modular suboperads of $\gr$, i.e.~subfunctors $F \subseteq \gr$ that also satisfy the Segal condition.
    Show there are infinitely many.
    Show that there are exactly two modular suboperads that contain the labelling $0 \in \gr(\frc_3)$
\end{exc}

\begin{example}\label{ex:endomorphism}
    For $\Ccal$ a symmetric monoidal $1$-category, $x \in \Ccal$ a dualisable object, and $e\colon x \otimes x \to \unit$ a non-degenerate symmetric pairing, we can define a one-coloured modular operad $\Ecal$, which generalizes the endomorphism operad of $x$.
    Its value on $\frc_k$ is $\Ecal(\frc_k) = \Map_\Ccal(1, x^{\otimes k})$.
    The functoriality with respect to graph morphisms is defined by using the pairing to cancel pairs of half-edges that are collapsed and by using the dual co-paring to label newly introduced bivalent vertices.
    We will discuss variants and generalizations of this in \cref{ex:underlying}, \cref{ex:underlying-sketch}, and \cref{ex:Udual}.
\end{example}

\subsubsection{The Segal condition -- multi-coloured.}
In a one-coloured modular operad $\Ocal$ the value on the edge $\Ocal(\fre)$ is always trivial.
In general, we need more flexibility than this: for example, when considering the modular operad $\Bcal_d$ that corresponds to $\Bord_d$, the space of colours $\Bcal_d(\fre)$ should be the space of connected closed $(d-1)$-manifolds.
Instead, there should be a space of colours and the product in the Segal condition should be replaced by a fiber product over this space.
In addition to the maps $\rho_v\colon \Gamma \to \frc_v$ for each vertex, we will also need maps
\[
    \rho_h \colon \Gamma \to \fre
\]
for every half-edge $h \in A(\Gamma)$.
This map (necessarily) sends all vertices to $\infty$ and on half-edges it is given by the inclusion $\{h,h^\dagger\} \subset A(\Gamma)$.

\begin{defn}
    A map of graphs $f\colon \Gamma^+ \to \Lambda^+$ is called \hldef{inert} if for all $w \in V_\Lambda$ the preimage $f^{-1}(v)\subset \Gamma^+$ is a single point and this point is a vertex.
    We let $\hldef{\Gr^\xint} \subset \Gr$ denote the wide subcategory where morphisms are inert maps.
    We let $\hldef{\Gr^\el} \subset \Gr^\xint$ denote the full subcategory on the elementary graphs $\fre$ and $\{\frc_k\}_{k\ge 0}$.
\end{defn}

Generalizing from the characterization of the Segal for commutative monoids in terms of inert maps in \cref{lem:Segal-RKE} we make the following definition.

\begin{defn}\label{defn:modular-operad}
    A \hldef{modular operad} is a functor $\Ocal\colon \Gr \to \An$ such that the restriction
    \[
        \Ocal_{|\Gr^\xint} \colon \Gr^\xint \too \An
    \]
    is right Kan extended from the full subcategory $\Gr^\el \subset \Gr^\xint$.
\end{defn}

This definition an instance of the concept of Segal spaces over algebraic patterns.
The theory of algebraic patterns, developed by Chu and Haugseng \cite{patterns1}, is extremely useful, and you should check it out!

We can also spell out the condition more concretely.
\begin{lem}\label{lem:Segal-condition-pullback}
    A functor $\Ocal \colon \Gr \to \An$ is a modular operad if for every graph $\Gamma$ the square
        \[\begin{tikzcd}
        	{X(\Gamma)} & {\displaystyle\prod_{v \in V(\Gamma)} X(\frc_v)} \\
        	{\displaystyle \prod_{e \in E(\Gamma)} X(\fre)} 
        	& {\displaystyle \prod_{a \in A(\Gamma)} X(\fre_a)} 
        	\arrow[from=1-1, to=2-1, "{(\rho_{e^+})_{e \in E(\Gamma)}}"']
        	\arrow[from=2-1, to=2-2, "{(+,-)}"]
        	\arrow[from=1-2, to=2-2]
        	\arrow[from=1-1, to=1-2, "{(\rho_v)_{v \in V(\Gamma)}}"]
        	\arrow["\lrcorner"{anchor=center, pos=0.125}, draw=none, from=1-1, to=2-2]
        \end{tikzcd}\]
    is cartesian, i.e.~it is a pullback square.
    To construct this square, we choose for each edge $e \in E(\Gamma)$ an ordering of the arcs as $e = \{e^+, e^-\}$; whether the square is cartesian will not depend on this choice.
    Then $\rho_{e^+}\colon \Gamma \intto \fre$ is the inert map we constructed before, 
    $(+,-)$ is the product of maps $X(\fre_e) \to X(\fre_{e^+}) \times X(\fre_{e^-})$ that are the identity on the first factor and apply the automorphism of $\fre$ on the second factor,
    and the right maps is the product of maps $X(\frc_v) \to \prod_{t(a) = v} X(\fre_a)$ coming from the inert maps $\frc_v \intto \fre_a$ that restrict to the arcs incident at the vertex.
\end{lem}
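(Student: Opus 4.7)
This lemma is a pointwise unpacking of the right Kan extension condition in \cref{defn:modular-operad}: that condition says for every graph $\Gamma$ the canonical map
\[
    \Ocal(\Gamma) \too \lim_{(\Gamma \intto E) \in (\Gr^\el)_{\Gamma/}} \Ocal(E)
\]
to the pointwise limit over the slice of inert maps to elementary graphs is an equivalence. The plan is to describe this slice category concretely and then rewrite the resulting limit as the pullback in the statement.

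First I would enumerate the slice. An inert map $\Gamma \intto \frc_k$ is forced to send a single vertex $v \in V(\Gamma)$ to the vertex of $\frc_k$ and all other vertices to $\infty$; condition (2) of a graph map then identifies $\frc_k$ with $\frc_v$, so these maps are exactly the $\rho_v$. A graph map $\Gamma \to \fre$ is automatically inert (as $\fre$ has no vertices) and, by the same homeomorphism condition on edge-interiors, is pinned down by a single edge of $\Gamma$ together with an orientation, i.e.\ a half-edge $a \in A(\Gamma)$; so these are the $\rho_a$. Since $\fre$ has no vertices there are no inert maps $\fre \intto \frc_v$, and inspecting inert maps between elementary graphs shows that the remaining morphisms in the slice are: a morphism $\rho_v \to \rho_a$ whenever $t(a) = v$, coming from the inert map $\frc_v \intto \fre$ that picks out the directed arc $(a, +)$, together with the flip isomorphism $\rho_a \cong \rho_{a^\dagger}$ induced by the $C_2$-automorphism $\tau$ of $\fre$.

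Second, I would compute the limit over this diagram: it is the space of tuples $(x_v)_v \in \prod_v \Ocal(\frc_v)$ and $(y_a)_a \in \prod_a \Ocal(\fre)$ satisfying (i) $y_a = \mathrm{res}_a(x_v)$ whenever $t(a) = v$, and (ii) $y_{a^\dagger} = \tau(y_a)$ for every $a$. Relation (ii) lets me replace the data $(y_a)_a$ by $(y_e)_e \in \prod_e \Ocal(\fre)$: after fixing orderings $e = \{e^+, e^-\}$ of the edges and setting $y_e := y_{e^+}$, we recover $y_{e^-} = \tau(y_e)$; the two instances of (i) at $e^+$ and $e^-$ then become exactly the condition that the composites $\prod_v \Ocal(\frc_v) \to \prod_a \Ocal(\fre_a)$ and $\prod_e \Ocal(\fre) \to \prod_a \Ocal(\fre_a)$ appearing in the lemma agree, i.e.\ the cartesianness of the square.

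The main obstacle is the orientation bookkeeping in the first step, especially in pathological cases like loop edges (where both $a$ and $a^\dagger$ lie in $A_v$): here the slice actually contains two parallel morphisms $\rho_v \rightrightarrows \rho_a$, the second being the composite of $\rho_v \to \rho_{a^\dagger}$ with $\tau$, and one must check that the extra constraint it would impose on the limit is already implied by (i) for $a^\dagger$ together with (ii) — without which the lemma's pullback would under-determine the limit. Once this enumeration is pinned down, the final rewrite of the limit as the stated pullback is formal, and the equivalence of the two conditions follows pointwise over $\Gamma$.
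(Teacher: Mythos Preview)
The paper states this lemma without proof, presenting it simply as a concrete unpacking of \cref{defn:modular-operad} (with a pointer to the general framework of \cite{patterns1}).  There is therefore nothing to compare against; your proposal \emph{is} the standard argument one would give here, and it is correct.

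A couple of minor remarks on your write-up.  First, your handling of loop edges is right: the two morphisms $\rho_v \rightrightarrows \rho_a$ you worry about are exactly the morphism to $\rho_a$ and the composite of the morphism to $\rho_{a^\dagger}$ with $\tau$, so the ``extra'' constraint is already implied by condition (i) at $a^\dagger$ together with (ii), just as you say.  Second, it is worth making explicit the case of an external arc, i.e.\ $a$ with $t(a)=\infty$: here $\rho_a$ receives no morphism from any $\rho_v$ in the slice, and its contribution to the limit comes entirely through the isomorphism $\rho_a \cong \rho_{a^\dagger}$ (where $t(a^\dagger)$ is a vertex, since $\Gamma$ is connected and not $\fre$).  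This is why the right vertical map in the square, which as stated only hits the factors $\fre_a$ with $t(a)\in V$, still determines the full limit once combined with the $(+,-)$ map on the bottom --- the external-arc factors in $\prod_a X(\fre_a)$ are redundant.  Saying this in one line would close the only gap a careful reader might flag.
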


\begin{cor}
    A functor $\Ocal \colon \Gr \to \An$ is a one-coloured modular operad if and only if it is a modular operad and $\Ocal(\fre)$ is contractible.
\end{cor}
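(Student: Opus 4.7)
The plan is to use the explicit pullback description of the Segal condition given in \cref{lem:Segal-condition-pullback} and observe that the two conditions coincide exactly when $\Ocal(\fre)$ is contractible.

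For the forward direction, suppose $\Ocal$ is a one-coloured modular operad, so $\Ocal(\Gamma) \to \prod_{v \in V(\Gamma)} \Ocal(\frc_v)$ is an equivalence for every $\Gamma \in \Gr$. Applying this to $\Gamma = \fre$, where $V(\fre) = \emptyset$, the empty product is terminal and we get $\Ocal(\fre) \xrightarrow{\simeq} *$, so $\Ocal(\fre)$ is contractible. To verify the pullback condition from \cref{lem:Segal-condition-pullback}, note that contractibility of $\Ocal(\fre)$ makes both $\prod_{e \in E(\Gamma)} \Ocal(\fre)$ and $\prod_{a \in A(\Gamma)} \Ocal(\fre_a)$ contractible, so the pullback square collapses and its apex is equivalent to $\prod_v \Ocal(\frc_v)$; the one-coloured condition identifies $\Ocal(\Gamma)$ with exactly this space, so the square is cartesian and $\Ocal$ is a modular operad.

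For the reverse direction, assume $\Ocal$ is a modular operad and $\Ocal(\fre) \simeq *$. Then the same collapsing argument shows that the pullback in \cref{lem:Segal-condition-pullback} reduces to $\prod_{v \in V(\Gamma)} \Ocal(\frc_v)$, and the cartesianness of the square provides the equivalence $\Ocal(\Gamma) \simeq \prod_v \Ocal(\frc_v)$ required by the one-coloured Segal condition.

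There is essentially no obstacle here: both directions amount to the elementary observation that fiber products over contractible spaces are ordinary products. The only subtlety worth checking is that the map $\Ocal(\Gamma) \to \prod_v \Ocal(\frc_v)$ obtained by collapsing the pullback square agrees with the map $(\rho_v)_{v \in V(\Gamma)}$ from the definition of a one-coloured modular operad, which is immediate from the construction of the square using the inert maps $\rho_v\colon \Gamma \to \frc_v$.
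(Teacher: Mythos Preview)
Your proof is correct and is exactly the argument the paper has in mind: the corollary is stated without proof precisely because it follows immediately from \cref{lem:Segal-condition-pullback} by the observation that a pullback over contractible spaces is a product.
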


\begin{rem}
    If we think about graphs topologically, then we can describe the Segal condition as a sheaf condition with respect to covers by open subgraphs.
    Suppose for simplicity that we allow disconnected graphs in $\Gr$.
    Fix a graph $\Gamma$ and a decomposition $(\Gamma^+)\setminus \infty = U_1 \cup U_2$ such that each $U_i$ is open and has finitely many components.
    Then there are graphs $\Lambda_i$ with $\Lambda_i^+\cong U_i^+$ and $\Lambda_{01}^+ \cong (U_0\cap U_1)^+$ and the collapse maps $\Gamma^+ \to U_i^+$ induce inert maps $\Gamma \intto \Lambda_i$ as well as $\Lambda_i \intto \Lambda_{01}$.
    Then the Segal condition can be expressed as saying
    \[\Ocal(\Gamma) = \Ocal(\Lambda_0) \times_{\Ocal(\Lambda_{01})} \Ocal(\Lambda_1).\]
    The pullback square in \cref{lem:Segal-condition-pullback} is in fact an instance of this for the canonical open cover where $U_0 = \Gamma^+ \setminus V^+$ and $U_1$ is a disjoint union of small open neighbourhoods of the vertices.
    (Here we also need to use that if $\Ocal$ is defined on disconnected graphs, then the Segal condition in particular tells us that $\Ocal(\Gamma \sqcup \Gamma') = \Ocal(\Gamma) \times \Ocal(\Gamma')$.)
    
    Said yet another way, we can interpret $(\Gr^{\xint})^\op$ as a category of open topological graphs and open embeddings, with the comparison to the topological definition of $\Gr$ given by taking each open embedding $i\colon X \hookrightarrow Y$ to its collapse map $i^!\colon Y^+ \too X^+$.
    Then the Segal condition says exactly that $\Ocal_{|\Gr^\xint}$ is a sheaf with respect to such open embeddings.
\end{rem}

\begin{example}\label{ex:orientation}
    The orientation modular operad is the functor
    \[
        \mrm{or}\colon \Gr \xtoo{A} (\Fin^{\BCtwo})^\op \xtoo{\Map^{\Ctwo}(-,\Ctwo)} \Sets \subset \An,
    \]
    which sends a graph $\Gamma$ to the set $\Map^{\Ctwo}(A_\Gamma, \Ctwo)$ of orientations of the arcs of $\Gamma$.
    One checks that this indeed satisfies the condition of \cref{defn:modular-operad} as it is in fact right Kan extended from the full subcategory $\BCtwo=\{\fre\} \subset \Gr$.
    This modular operad has two colours $\mrm{or}(\fre) = \{\to,\leftarrow\}$ corresponding to the two orientations of the edge $\fre$.
    It has exactly one $k$-ary operation for choice of input colours, so $\Ocal(\frc_k)$ has $2^k$ elements.
\end{example}

\begin{example}\label{ex:underlying}
    We can extend the example from \cref{ex:endomorphism} to a multicoloured modular operad $\Ucal_{\rm vect}$ as follows.
    (See also \cite[(2.25)]{GetzlerKapranov1998}.)
    The colours of $\Ucal_{\rm vect}$ will be pairs of a vector space $V$ and a non-degenerate symmetric bilinear form on $V$.
    For a graph $\Gamma$, let $\Ucal_{\rm vect}(\Gamma)$ be the set of labellings $(V, \alpha)$ where for each edge $e \in E_\Gamma$, $V_e$ is a vector space with a non-degenerate symmetric bilinear form, and for each vertex $v \in V_\Gamma$, $\alpha_v \in \bigotimes_{a \in t^{-1}(v)} V_a$ is a vector in the tensor product of the vector spaces assigned to the adjacent edges.
    See \cref{fig:graph-labelled-by-vectors}.

    This can be made into a functor $\Gr \to \Sets$, where inert maps forget data, edge contractions are implemented by using the inner product to contract $\alpha_v \otimes \alpha_w$, and subdivisions are implemented using the copairing $\sum_{i} b_i \otimes b_i^\# \in V \otimes V$ induced by the inner product.
\end{example}

\begin{figure}[ht]
    \centering
    \def\svgwidth{.6\linewidth}
    \import{figures/}{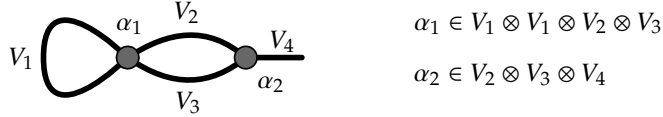}
    \caption{A graph $\Gamma$ and a labelling by vector spaces that defines an element in $\Ucal_{\rm vect}(\Gamma)$.}
    \label{fig:graph-labelled-by-vectors}
\end{figure}

\subsubsection{A factorization system.}
To explain why our definition of modular \operads{} in \cref{defn:modular-operad} is indeed an instance of Segal spaces over an algebraic pattern \cite{patterns1}, we briefly describe a factorization system on $\Gr$.
In doing so, we'll give another description of inert maps, this time in terms of the combinatorial definition of graph maps.

\begin{defn}
    A graph map $f\colon \Gamma \to \Lambda$ is called
    \begin{enumerate}[(1)]
        \item \hldef{inert} if $f_{\rm v}^{-1}(v)$ has exactly one element for all $v \in V(\Lambda)$ and $f_{\rm a}^{-1}(a)$ is only empty when $f_{\rm v}(r(a)) = \infty$.
        (This means that the only edges collapsed are those sent to $\infty$.)
        \item \hldef{active} if $f_{\rm v}^{-1}(\infty) = \{\infty\}$ and the path $f_{\rm a}^{-1}(a)$ never passes through $\infty$
    \end{enumerate}
    We denote inert morphism by $\intto$ and active morphisms by $\actto$.
\end{defn}

\begin{lem}
    Every morphism $f\colon \Gamma \to \Lambda$ admits a factorization
    \[
        \Gamma \intto \Sigma \actto \Lambda
    \]
    and the category of such factorizations of $f$ is a contractible groupoid.
\end{lem}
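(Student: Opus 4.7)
The plan is to construct the factorization by extracting from $\Gamma$ the ``non-infinity part'', i.e.\ the portion of $\Gamma^+$ not collapsed to $\infty_\Lambda$ by $f$. Working topologically, I would define $\Sigma^+$ as the quotient of $\Gamma^+$ obtained by collapsing $f^{-1}(\infty_\Lambda)$ to a single point, which becomes $\infty_\Sigma$. One first observes that condition (2) in the definition of graph map forces arcs of $\Gamma$ to behave well under $f$: interior points of an arc map to interior points of a $\Lambda$-arc unless the whole arc is sent to a vertex of $\Lambda^+$. Consequently $f^{-1}(\infty_\Lambda) \subset \Gamma^+$ is a subcomplex (a union of vertices and whole arcs), and the quotient $\Sigma^+$ is the one-point compactification of a graph $\Sigma$ in our sense, with $V_\Sigma = V_\Gamma \setminus f_v^{-1}(\infty_\Lambda)$ and arcs inherited from $\Gamma$ outside the collapsed set. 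The factored maps are the quotient $g\colon \Gamma^+ \to \Sigma^+$ and the induced $h\colon \Sigma^+ \to \Lambda^+$, and $f = h \circ g$ holds by construction.

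Next I would verify inertness and activeness. For $g$: the vertices of $\Sigma$ are in bijection with $V_\Gamma \setminus f_v^{-1}(\infty_\Lambda)$, so $g_v^{-1}(u)$ is a single point for every $u \in V_\Sigma$, and the only arcs collapsed by $g$ are those in $f^{-1}(\infty_\Lambda)$, which are sent to $\infty_\Sigma$. For $h$: by construction no element of $V_\Sigma$ is sent to $\infty_\Lambda$, so $h_v^{-1}(\infty_\Lambda) = \{\infty_\Sigma\}$; and for any arc $a \in A_\Lambda$, the preimage $h_a^{-1}(a)$ coincides with $f_a^{-1}(a)$ under the identification $A_\Sigma \hookrightarrow A_\Gamma$, which by condition (3) of the combinatorial definition is a bivalent path. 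This path lies entirely in the non-collapsed part and therefore does not pass through $\infty_\Sigma$, so $h$ is active.

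For uniqueness, suppose $\Gamma \xrightarrow{g'} \Sigma' \xrightarrow{h'} \Lambda$ is another factorization. A vertex $v \in V_\Gamma$ satisfies $g'_v(v) \in V_{\Sigma'}$ if and only if $v \notin f_v^{-1}(\infty_\Lambda)$: the forward direction follows because $h'$ active sends $V_{\Sigma'}$ into $V_\Lambda$, and the reverse because $g'_v(v) = \infty$ would give $f_v(v) = h'_v(\infty) = \infty$. Combined with injectivity of $g'_v$ on $V_{\Sigma'}$ (inertness), this forces a canonical bijection $V_\Sigma \cong V_{\Sigma'}$. An analogous analysis of arcs --- using that the preimage of each edge of $\Lambda$ under an active map is a bivalent path avoiding $\infty$, and that an inert map collapses only edges sent to $\infty$ --- identifies $A_{\Sigma'}$ with the arcs of $\Gamma$ outside $f^{-1}(\infty_\Lambda)$. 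The resulting isomorphism $\Sigma \cong \Sigma'$ is compatible with the structure maps, and has no nontrivial automorphisms because an automorphism of a factorization must be the identity on vertex and arc data, which faithfully detects morphisms in $\Gr$ by \cref{lem:isotopy-class}. Thus the category of factorizations is a contractible groupoid.

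The main obstacle will be the careful bookkeeping that translates between the topological and combinatorial descriptions: in particular, verifying that $\Sigma^+$ is again the compactification of a graph in our sense (which reduces to a local analysis of the quotient near $\infty_\Sigma$ and to the subcomplex property of $f^{-1}(\infty_\Lambda)$), and that the induced maps $g, h$ satisfy the correct combinatorial conditions. Once this dictionary is in hand, both existence and uniqueness reduce to essentially formal consequences, and contractibility follows from the canonicity of $\Sigma$ together with the faithfulness of \cref{lem:isotopy-class}.
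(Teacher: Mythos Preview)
The paper states this lemma without proof, so there is no argument to compare against. Your approach is correct and is the standard construction of the inert--active factorization: collapse $f^{-1}(\infty_\Lambda)$ to obtain the intermediate graph $\Sigma$, then verify that the quotient map is inert and the induced map is active.

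One detail you should make explicit is that $\Sigma$ is a \emph{connected} graph, as required for objects of $\Gr$. This holds because $\Sigma^+ \setminus \{\infty_\Sigma\} = f^{-1}(\Lambda^+ \setminus \{\infty_\Lambda\})$, and conditions (2) and (3) in the definition of graph map ensure this preimage is connected: each vertex of $\Lambda$ has connected preimage, each open edge has an open interval as preimage, and these pieces glue connectedly over the connected space $\Lambda^+ \setminus \{\infty_\Lambda\}$. With that addition, your existence argument is complete, and your uniqueness argument via the faithfulness lemma for $(V^+, A)$ is fine.
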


\subsection{The manifold modular operad}\label{subsec:mfd-modular-operad}

We now define a modular \operad{} $\Bcal_d$ that is analogous to the bordism category $\Bord_d$.
This analogy will be made precise in \cref{thm:envelope} where we will see that one can obtain $\Bord_d$ from $\Bcal_d$ (and vice versa).

In principle, we would like to define $\Bcal_d$ as follows.
For a graph $\Gamma$, a $d$-manifold fibered over $\Gamma$ is a smooth manifold $W$ with $\partial W = \emptyset$ and a continuous map $\pi\colon W \to X$ such that
    \begin{enumerate}
        \item $\pi$ is proper (i.e.~preimages of compacts are compact),
        \item the fiber $\pi^{-1}(x)$ is connected for all $x \in X$,
        \item $\pi^{-1}(X \setminus V) \to X \setminus V$ is smooth and a submersion, and
        \item $\pi^{-1}(V) \subset W$ is a codimension $0$ submanifold with boundary.
    \end{enumerate}
However, its quite difficult and tedious to make this well-defined and functorial.

\subsubsection{Non-unital modular operads.}
Instead of defining an entire modular operad, we will only define its non-unital part, i.e.~we won't be specifying the unit map $\fre \actto \frc_2$ and all its associated coherence.
\begin{defn}\label{defn:quasi-collapse}
    A graph map $f\colon \Gamma \to \Lambda$ is a \hldef{quasi-collapse} if $f_{\rm v}\colon V_+(\Gamma) \to V_+(\Lambda)$ is surjective.
    We let $\Gr^\qc \subset \Gr$ denote the subcategory that contains all objects but only the quasi collapse morphisms.
\end{defn}
This category still contains all the inert morphisms, so we can make sense of the Segal condition.
\begin{defn}
    A \hldef{non-unital modular operad} is a functor $\Ocal\colon \Gr^\qc \too \An$ that satisfies the Segal condition.
\end{defn}

\begin{thm}[\cite{modular}]\label{thm:non-unital}
    Restriction along the inclusion $i\colon \Gr^\qc \to \Gr$ induces a functor
    \[
        i^*\colon \ModOp \too \mrm{nuModOp}.
    \]
    This functor is replete, i.e.~it induces a monomorphism on mapping spaces, and it is fully faithful on maximal subgroupoids.
\end{thm}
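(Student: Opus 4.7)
The plan is to treat this as the modular-operad analogue of the quasi-unital story for topologically enriched categories alluded to just before \cref{thm:non-unital}: the unit of a modular operad should be a \emph{property} rather than additional structure once all the quasi-collapse (``non-unital'') data has been fixed, and both claimed properties of $i^*$ then follow formally. First I would check well-definedness of $i^*$. Every inert map is automatically a quasi-collapse, since the condition $f_{\rm v}^{-1}(v) = \{\text{one vertex}\}$ for each $v \in V(\Lambda)$ forces $f_{\rm v}$ to be surjective onto $V_+(\Lambda)$. Hence $\Gr^{\el} \subseteq \Gr^{\xint} \subseteq \Gr^{\qc} \subseteq \Gr$ with the elementary and inert subcategories in common, so the Segal/right-Kan-extension condition transfers along restriction and $i^*$ indeed lands in $\mrm{nuModOp}$.

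Next I would pinpoint the morphisms lost under the inclusion $i$. Using the active--inert factorization, any map not in $\Gr^{\qc}$ factors as an inert map followed by an active map that sends at least one vertex to $\infty$. The generating example is the unit insertion $u\colon \fre \actto \frc_2$, which topologically inserts a bivalent vertex on the edge $\fre$. Evaluating a modular operad $\Ocal$ on $u$ picks out, for each colour $c \in \Ocal(\fre)$, a distinguished element $u_c \in \Ocal(\frc_2)_{c,c}$ that behaves as an identity under gluing; all other missing active maps assemble from $u$ together with quasi-collapse data. The crucial lemma is then: for any $\Ocal^{\qc} \in \mrm{nuModOp}$, the space of unital extensions $\Ocal \in \ModOp$ restricting to $\Ocal^{\qc}$ is either empty or contractible. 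Colour-by-colour this reduces to the classical fact that quasi-units are unique up to contractible choice via the two-unit argument, and the Segal condition lets one glue the colourwise choices into coherent global unit data on all of $\Gr$.

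Given this lemma, both claims follow. For repleteness, a lift of $\phi\colon i^*\Ocal \to i^*\Pcal$ to a map $\Ocal \to \Pcal$ amounts to checking naturality of $\phi$ against the unit insertion $u$, i.e.\ checking that $\phi_{\frc_2}(u^\Ocal_c)$ coincides with $u^\Pcal_{\phi(c)}$. But the quasi-unit axioms for $u^\Ocal_c$ are expressed entirely via quasi-collapse maps and hence transport through $\phi$, so $\phi_{\frc_2}(u^\Ocal_c)$ is automatically a quasi-unit in $\Pcal$; the lemma then makes the space of such identifications contractible, giving a $(-1)$-truncated fiber of $i^*$ on mapping spaces. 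For full faithfulness on maximal subgroupoids, any equivalence $\phi$ is in particular an equivalence of elementary data and thus produces matching quasi-unit structures on both sides, so the lemma upgrades $\phi$ uniquely to an equivalence of unital modular operads. The main obstacle I anticipate is making this quasi-unit uniqueness fully homotopy-coherent across all colours and all higher active unit-insertion diagrams simultaneously; the cleanest approach is to argue within the algebraic patterns framework of \cite{patterns1}, where $\Gr^{\qc} \hookrightarrow \Gr$ is a morphism of patterns with identical inert subcategories and elementary objects, so that the abstract Segal-object extension theory applies directly.
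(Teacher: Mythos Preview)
The paper does not contain a proof of this theorem: it is imported as a black box from the reference \cite{modular}, with only the sentence ``There is a concrete description of the essential image of $i^*$ in terms of `quasi-unital modular operads', which makes this strategy feasible, but we won't go into the details here'' by way of commentary. So there is no proof in the paper to compare your proposal against.

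That said, your plan is the natural one and matches the hint the paper gives. One small slip: you write that a map not in $\Gr^{\qc}$ factors as an inert map followed by ``an active map that sends at least one vertex to $\infty$''. Active maps by definition satisfy $f_{\rm v}^{-1}(\infty)=\{\infty\}$, so they never send a vertex to $\infty$; the failure of surjectivity on $V_+$ for an active map comes instead from \emph{creating} new bivalent vertices via subdivision, with $u\colon \fre \actto \frc_2$ the basic instance (as you correctly identify). Apart from this mischaracterisation, the outline---uniqueness of quasi-units via the two-unit argument, propagated through the Segal condition, then repleteness from ``morphisms preserve quasi-units'' and full faithfulness on cores from contractibility of the space of extensions---is exactly the shape such a proof takes, and your anticipated difficulty (coherence across all colours and all subdivision maps simultaneously) is the genuine content that \cite{modular} has to supply.
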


Thus, to construct a modular operad it suffices to construct a non-unital modular operad and show that it is in the essential image of $i^*$; for those non-unital modular operads the theorem guarantees that there is a unique extension to a modular operad.
There is a concrete description of the essential image of $i^*$ in terms of ``quasi-unital modular operads'', which makes this strategy feasible, but we won't go into the details here.

\subsubsection{The non-unital manifold modular operad.}
To define $\Bcal_d$ as a non-unital modular operad we need some preliminary constructions.
Recall that a cut-system in a compact oriented $d$-manifold $M$ is a codimension $1$ submanifold $S \subset M$ that is coorientable and contains the boundary of $M$.
We can now precisely define its dual graph.

\begin{defn}
    The \hldef{dual graph $\Delta(M,S)$} of a cut system is defined as follows:
    its vertices are the path components of $M \setminus S$ and
    its half-edges are pairs of a component of $S$ and a coorientation on it.
    (A coorientation of $S_0 \subset S$ can be described as an equivalence class of nowhere-zero normal vector fields up to positive rescaling.)
    The involution $\dagger$ reverses the coorientation and the root map $r\colon A \to V$ sends a component $S_0 \subset S$ to the component of $M\setminus S$ into which the coorientation points, assigning $\infty$ if $S_0\subset \partial M$ and the coorientation points out of $M$.
\end{defn}

Now to construct $\Bcal_d\colon \Gr^\qc \to \An$ we define for each graph $\Gamma \in \Gr^\qc$, a topologically enriched groupoid $B_d(\Gamma)$.
This groupoid has as objects triples $(M,S,\alpha)$ of a connected compact oriented $d$-manifold $M$ with a cut-system $S \subset M$ and an isomorphism of graphs $\alpha \colon \Delta(M,S) \cong \Gamma$.
The space of morphisms $(M, S, \alpha) \to (N, R, \beta)$ is the space of those diffeomorphisms $\varphi\colon M \to N$ that satisfy $\varphi(S) = R$ and for which the induced isomorphism of dual graphs satisfies $\beta\circ \Delta(\varphi) = \alpha$.
When the graph $\Gamma$ is the edge $\fre$ we simply set $B_d(\fre)$ to be the groupoid of closed oriented $(d-1)$-manifolds.
Every graph map $f\colon \Gamma \to \Lambda$ induces a functor
\begin{align*}
    B_d(f)\colon B_d(\Gamma) &\too B_d(\Lambda)  \\
    (M,S,\alpha) &\longmapsto (N, S', \alpha')
\end{align*}
where $N$ is obtained from $M$ by deleting all components of $M\setminus S$ that correspond to vertices being deleted (i.e.~sent to $\infty$) by $f$, cutting along all components of $S$ that correspond to edges being cut by $f$ (i.e.~those edges $\{a,a^\dagger\}$ where $f_{\rm a}^{-1}(a)$ has more than one element).
The cut system $R$ is obtained from $S$ by removing all components that correspond to edges $e = \{a,a^\dagger\}$ that are being collapsed by $f$ (i.e.~where $f_{\rm a}^{-1}(a) =\emptyset$).
We then obtain our modular operad by taking the realization of this groupoid
\[
    \Bcal_d\colon \Gr^\qc \xtoo{B_d} \TopGpd \xtoo{|-|} \An.
\]
This is quasi-unital and thus by \cref{thm:non-unital} uniquely extends to a modular operad.

In dimension $d=2$ we get that 
\[
    \Bcal_2(\fre) = B\SO(2)  \qqand
    \Bcal_2(\frc_k) = \bigsqcup_{g \ge 0} B\Diff(\Sigma_{g,k}).
\]

\begin{example}\label{ex:handlebody-modular-operad}
    We can also define a variant $\Bcal_d^\partial$ of $\Bcal_d$ where the $d$-manifolds are allowed to have boundary.
    For $d=3$ this has an interesting sub modular operad $\hldef{\Hcal}\subset \Bcal_3^\partial$, the \hldef{handlebody modular operad}.
    Its colours are $2$-disks and its operations are handlebodies with marked disks in their boundary:
    \[
        \Hcal(\fre) = B\Diff(D^2) \simeq B\SO(2)  \qqand
        \Hcal(\frc_k) = \bigsqcup_{g \ge 0} B\Diff_{\sqcup_k D^2}((S^1 \times D^3)^{\natural g})
    \]
    There is a map of modular operads
    \[
        \partial\colon \Hcal \too \Bcal_2,
    \]
    and it follows from the Smale conjecture ($\Diff(D^3) \simeq \SO(3)$, as proven by Hatcher \cite{Hatcher1981}) and Smale's theorem $\Diff(S^2) \simeq \SO(3)$ \cite{Smale59} that this map induces an equivalence in genus $0$.
\end{example}

\section{Envelopes and duality}

\subsection{Variants of modular \captioninfty-operads}
There are various notions closely related to modular operads, whose definition is entirely analogous to modular operads, except that the category of graphs is replaced by another category.
From here on we will often leave the ``$\infty$'' in modular \operads{} implicit.

\subsubsection{Properads.}
One of the most important variants for us will be properads, which are built on directed acyclic graphs.

\begin{defn}
    The category of directed graphs $\hldef{\dGr}$ is defined as the pullback
    \[\begin{tikzcd}
        \dGr \ar[r] \ar[d, "E"'] \ar[dr, phantom,very near start, "\lrcorner"] & \Gr \ar[d, "A"] \\
        \Fin^\op \ar[r, "-\times \Ctwo"] & (\Fin^{\BCtwo})^\op.
    \end{tikzcd}\]
    In other words, a directed graph is a graph $\Gamma$ together with a splitting of $A_\Gamma$ as $E_\Gamma \times \Ctwo$.
    The category of \hldef{directed acyclic graphs} is the full subcategory $\hldef{\daGr} \subset \Gr$ on those directed graphs that do not have directed cycles.
\end{defn}

\begin{example}
    \Cref{fig:Ucal-labelling} on \pageref{fig:Ucal-labelling} depicts a map of directed graphs and both graphs shown are in the full subcategory $\daGr \subset \Gr$ of directed acyclic graphs.
\end{example}

We can also make sense of the Segal condition for functors $\daGr \to \An$, as we shall make precise shortly (\cref{defn:graph-pattern}).
The elementary objects here are the directed edge $\fre$ and the directed corollas $\hldef{\frc_{k,l}}$ for all $k,l\ge 0$.
(To obtain $\frc_{k,l}$ the edges of $\frc_{k+l}$ are oriented in such a way that the vertex has $k$ incoming and $l$ outgoing edges.)
\Cref{fig:elementary-composition} on page \pageref{fig:elementary-composition} depicts several directed corollas.
Such functors are called \hldef{properads}, and we let
\[
    \hldef{\Prpd} \subset \Fun(\daGr, \An)
\]
denote the category of properads.

\begin{example}
    Restriction along the forgetful functor $\varphi\colon \daGr \to \Gr$ preserves the Segal condition and thus defines a functor
    \[
        \varphi^*\colon \ModOp \too \Prpd .
    \]
    For a modular operad $\Ocal$ the resulting properad $\varphi^*\Ocal$ has the same space of colours (though it has forgotten about the $\Ctwo$-action on this space) and its spaces of operations are
    \[  
        (\varphi^*\Ocal)(\frc_{k,l}) = \Ocal(\varphi(\frc_{k,l})) = \Ocal(\frc_{k+l}).
    \]
    We will often abuse notation and simply denote $\varphi^*\Ocal$ by $\Ocal$.
\end{example}

\begin{example}\label{ex:underlying-sketch}
    For every symmetric monoidal \category{} $\Ccal$ we will in \cref{thm:envelope} encounter a properad $\Ucal(\Ccal)$.
    While constructing this coherently is quite a bit of work (and essentially the main theorem of \cite{properads}) we can already describe it conceptually.
    The value of $\Ucal(\Ccal)(\Gamma)$ at a graph $\Gamma$ is the space of all labellings of $\Gamma$ by $\Ccal$ in the following sense.
    A labelling assigns to each edge $e \in E_\Gamma$ of $\Gamma$ an object $x_e \in \Ccal$ and to each vertex $v \in V_\Gamma$ a morphism
    \[
        \alpha_v \colon \bigotimes_{e \in E_\Gamma(v)^{\rm in}} x_e \too \bigotimes_{e \in E_\Gamma(v)^{\rm out}} x_e
    \]
    whose source is the tensor product of labels edges entering $v$ and whose target is the tensor product of labels of edges exiting $v$.
    The functoriality of this with respect to inert maps $f\colon \Gamma \intto \Lambda$ is given by forgetting the part of the labelling associated to $f^{-1}(\infty)$, the functoriality with respect to edge contractions is by composing morphisms, and the functoriality with respect to subdivisions is by introducing identity morphisms.
\end{example}

\begin{figure}[ht]
    \centering
    \def\svgwidth{.8\linewidth}
    \import{figures/}{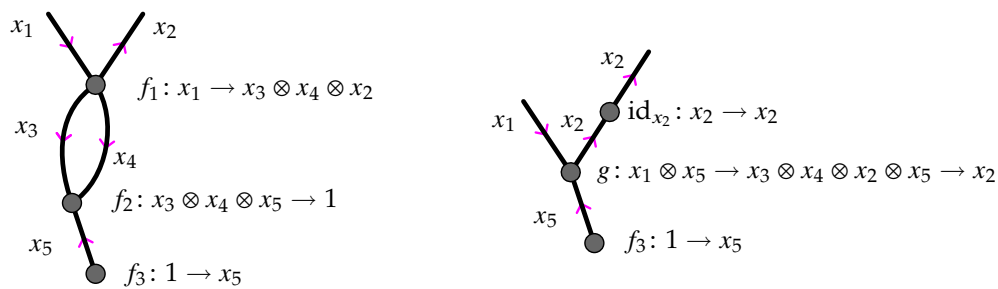}
    \caption{A map of graphs $f\colon \Gamma \to \Lambda$.
    The labelling of $\Gamma$ by objects and morphisms in a symmetric monoidal \category{} $\Ccal$ defines an element in $\Ucal(\Ccal)(\Gamma)$.
    The labelling of $\Lambda$ is the image under the (active) graph map that contracts the double-edge and subdivides the top right edge.
    Note that in computing $g$ as a composite of (tensors of) $f_1$ and $f_2$ we have to use that $\otimes$ is symmetric.
    Also note that for this composition to make sense we need that the edges we contract do not form a directed cycle.}
    \label{fig:Ucal-labelling}
\end{figure}

\subsubsection{Graph patterns.}
Properads are only one of many variants of the definition of modular operads one might want to consider.
All of these variants are defined by first specifying a variant of the graph category and then considering functors to $\An$ that satisfy an analogue of the Segal condition.
To have a framework for these examples, we introduce the following definition.
\begin{defn}\label{defn:graph-pattern}
    A \hldef{graph pattern} is a \category{} $\G$ with a functor $\pi\colon\G \to \Gr$ that admits cocartesian lifts for inerts.
    We let $\G^\xint \subset \G$ denote the wide subcategory where morphisms are cocartesian lifts of inerts and let $\G^\el \subset \G^\xint$ denote the full subcategory on all objects that map to $\fre$ or $\frc_k$ in $\Gr$.
    Then we define the category of $\G$-Segal spaces as the full subcategory
    \[
        \Seg(\G) \coloneq \Seg(\G; \An) \subset \Fun(\G, \An) 
    \]
    on those functors $F\colon \G \to \An$ such that $F_{|\G^\xint}$ is right Kan extended from $\G^\el$.
\end{defn}

\begin{example}
    We list several ``combinatorial'' examples of graph patterns in \cref{tab:graph-patterns}.
\begin{table}[h]
    \centering
    \begin{tabular}{c|lc}
         Graph pattern & allowed graphs & (complete) Segal objects \\
         \hline
         $\Gr$& all (connected) graphs & modular operads \\
         $\Tree$& all contractible graphs & cyclic operads \\
         $\mbf{linGr} \simeq \Dop\sslash\Ctwo$& all graphs such that $\Gamma^+ \cong S^1$,\tablefootnote{
            These are exactly the linear graphs $\mfr{l}_n$ from \cref{ex:linear-graph}.
         }& \categories{} with involution \\
         \hline
         $\dGr = \Un(\mrm{or})$& (con.) directed graphs & wheeled properads\\
         $\daGr$& directed acyclic graphs & properads \\
         $\dTree$& directed trees & dioperads \\
         $\dTree^{\rm 1-out} \simeq \Omega^\op$& rooted trees\tablefootnote{
            Here we think of rooted trees as connected directed graphs with the property that edge vertex has exactly one outgoing edge.
         } & operads \\
         $\mbf{dlinGr}\simeq \Dop$& directed linear graphs\tablefootnote{
            A directed linear graph is a graph obtained by subdividing the directed edges. Its underlying graph is isomorphic to $\mfr{l}_n$ for some $n$ and all its directed edges point in the same direction. 
         } & \categories{} \\
         \hline
         $\gGr = \Un(\gr)$& genus graded graphs & graded modular operads \\
         $\gGr^{\le g}$& genus-restricted graphs & genus-restricted mod. op. \\
    \end{tabular}
    \caption{Some graph patterns and the name for the resulting notion of Segal spaces.
    See \cite[Section 6]{Hackney-graph-categories} for a more detailed overview, though note that Hackney works with the opposites of the categories considered here.
    See \cref{defn:gGr} for graded graphs.}
    \label{tab:graph-patterns}
\end{table}
    Another type of example can be obtained as follows.
    Suppose that $\Vcal\in \SM$ is a symmetric monoidal \category{}, i.e.~a functor $\Vcal \colon \Fin_* \to \Cat$ satisfying the Segal condition and let $\Vcal^\otimes \to \Fin_*$ denote its unstraightening. 
    (This is how symmetric monoidal \categories{} are encoded in \cite{HA}.)
    Then we can define a graph pattern $\Gr^{\Vcal} \coloneq \Gr \times_{\Fin_*} \Vcal^\otimes$.
    This is a category where objects are graphs whose vertices are decorated by objects of $\Vcal$.
    The resulting notion of Segal spaces gives a definition of \emph{modular operads enriched in $\PSh(\Vcal^\op)$} where we equip the \category{} of presheaves with the Day convolution symmetric monoidal structure.
    By adding the assumption that certain presheaves are representable we can thus define the notion of a modular operad enriched in $\Vcal$.
    This is an instance of the approach to enrichment outlined in \cite{patterns3} and the reader interested in enrichment is strongly encouraged to read about it in \cite{patterns3}.
\end{example}

\begin{rem}
    The functor $\pi\colon \dGr \to \Gr$ is a left fibration and in fact it is exactly the unstraightening of the orientation modular operad $\mrm{or}\colon \Gr \to \An$ from \cref{ex:orientation}.
    It thus follows from \cite[Proposition 3.2.5]{HK21} that left Kan extension along $\pi$ induces an equivalence
    \[
        \pi\colon \Seg(\dGr) \simeq \Seg(\Gr)_{/\mrm{or}} = \ModOp_{/\mrm{or}}.
    \]
    In other words, a wheeled properad is the same as a modular operad with a map to the orientation modular operad.
    See also \cite[Theorem 6.23 and Remark 6.25]{Hackney-graph-categories}.
\end{rem}

\subsection{The monoidal envelope}
In this section we will import the main result of \cite{properads} as a black-box. It concerns an adjunction that allows us to build the free symmetric monoidal \category{} on an \properad{}.
The right adjoint will give use an \category{} version of the construction from \cref{ex:underlying} and \cref{ex:underlying-sketch}.

\begin{thm}[\cite{properads}]\label{thm:envelope}
    There is an adjunction
    \[
        \Env\colon \Prpd \adj \SM \cocolon \Ucal
    \]
    that has the various useful properties discussed below.
\end{thm}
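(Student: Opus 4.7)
The plan is to construct the right adjoint $\Ucal$ explicitly via the labelling description of \cref{ex:underlying-sketch}, check that it lands in properads and preserves limits, and then invoke the adjoint functor theorem to produce the left adjoint $\Env$. Both $\SM = \CMon(\Cat)$ and $\Prpd = \Seg(\daGr)$ are accessible localisations of the presentable \categories{} $\Fun(\Fin_*,\Cat)$ and $\Fun(\daGr,\An)$ respectively, so they are themselves presentable and the adjoint functor theorem will apply.

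First I would define $\Ucal(\Ccal) \colon \daGr \to \An$ for a symmetric monoidal \category{} $\Ccal^\otimes \colon \Fin_* \to \Cat$ by declaring $\Ucal(\Ccal)(\Gamma)$ to be the space of $\Ccal$-labellings: assignments $((x_e)_{e \in E_\Gamma},(\alpha_v)_{v \in V_\Gamma})$ of an object $x_e \in \Ccal$ to each edge and of a morphism $\alpha_v \colon \bigotimes_{e \in \mrm{in}(v)} x_e \to \bigotimes_{e \in \mrm{out}(v)} x_e$ to each vertex, with acyclicity making the two tensor products unambiguous. Evaluating on elementary graphs yields $\Ucal(\Ccal)(\fre) \simeq \Ccal^\simeq$ and $\Ucal(\Ccal)(\frc_{k,l}) \simeq \bigsqcup_{\vec x,\vec y} \Map_\Ccal(\bigotimes_i x_i, \bigotimes_j y_j)$, and the Segal condition for $\Ucal(\Ccal)$ reduces directly to the Segal condition on $\Ccal^\otimes$: a labelling of $\Gamma$ is the same datum as a compatible family of corolla-labellings glued along the common colour on each edge.

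Continuity of $\Ucal$ is essentially formal from this pointwise formula: limits in $\SM$ are computed pointwise in $\Cat$, passing to cores and mapping spaces preserves limits, and every ingredient in the construction of $\Ucal(\Ccal)(\Gamma)$ commutes with such limits. Combined with presentability of $\SM$ and $\Prpd$, the adjoint functor theorem produces the left adjoint $\Env$, completing the existence of the adjunction.

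The main obstacle is not this abstract nonsense but rather assembling $\Ucal$ rigorously as an \category{}-functor on all of $\daGr$, rather than merely on $\daGr^\xint$. Covariance under \emph{active} maps is where the symmetric monoidal coherence of $\Ccal^\otimes$ is really exercised: contracting an edge between two vertices forces composition of the adjacent $\alpha$'s along the shared colour; subdividing an edge inserts an identity morphism; and collapsing several vertices to one simultaneously composes along many edges while invoking the associator and braiding of $\otimes$. In the 1-categorical setting all of this is routine, but assembling it coherently in the \category{} setting is essentially the main technical content of \cite{properads}. A natural strategy, following the pattern-theoretic philosophy of \cite{patterns1}, is to form the pullback graph pattern $\daGr^{\Ccal} \coloneq \daGr \times_{\Fin_*} \Ccal^\otimes$ along the vertex functor $V^+ \colon \daGr \to \Fin_*$ and realise $\Ucal(\Ccal)$ as a right Kan extension along its projection to $\daGr$, reducing the coherence problem to compatibility of this Kan extension with the Segal conditions on both sides.
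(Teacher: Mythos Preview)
The paper does not prove this theorem: it explicitly imports it as a black-box from \cite{properads}, saying ``constructing this coherently is quite a bit of work (and essentially the main theorem of \cite{properads})''. So there is no proof in the present paper to compare your proposal against.

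That said, your outline is a reasonable sketch of how one would begin, and you correctly identify the main difficulty: functoriality of $\Ucal(\Ccal)$ along active maps is where all the symmetric monoidal coherence is spent. Two remarks, though. First, the adjoint functor theorem only gives \emph{existence} of $\Env$; the ``useful properties discussed below'' include concrete identifications such as $\Env(*) \simeq \Csp$, $\Env(\Bcal_d) \simeq \Bord_d$, and the explicit free-monoid description of the objects and morphisms of $\Env(\Ocal)$. None of these follow from abstract nonsense --- each requires either a direct construction of $\Env$ or a separate verification that the candidate corepresents the correct functor. So your argument, even if completed, would establish strictly less than the theorem claims.

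Second, the specific strategy you suggest at the end does not do what you want. Pulling back along the vertex functor $V^+\colon \daGr \to \Fin_*$ produces a category whose objects are graphs with \emph{vertices} labelled by objects of $\Ccal$ (this is exactly the enrichment pattern $\Gr^{\Vcal}$ mentioned later in the paper). But in $\Ucal(\Ccal)$ it is the \emph{edges} that carry objects and the vertices that carry morphisms. A right Kan extension from your $\daGr^{\Ccal}$ will not recover the labelling space you described. Whatever construction \cite{properads} uses, it must encode both the edge-labelling by objects and the vertex-labelling by morphisms simultaneously, which a single fibre product over $\Fin_*$ cannot do.
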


To describe the objects and morphisms of $\Env(\Ocal)$ we will need the free commutative monoid $\xF(X) = \bigsqcup_{n\ge 0} X^{\times n}_{h\Sigma_n}$. 
(One can show that this is indeed the forget-free monad of the adjunction $\CMon(\An) \adj \An$.)
The objects of $\Env(\Ocal)$ are freely generated by the colours of $\Ocal$ whereas the morphisms of $\Env(\Ocal)$ are freely generated by the operations of $\Ocal$, i.e.~
\[
    \Env(\Ocal)^\simeq = \xF(\Ocal(\fre)) 
    \qqand
    \Ar(\Env(\Ocal))^\simeq = \xF\left(\bigsqcup_{k,l \ge 0} \Ocal(\frc_{k,l})\right).
\]
\begin{example}\label{ex:envelope-operads}
    As \operads{} are just \properads{} where every operation has exactly one output we can also take monoidal envelopes of \operads{} and this recovers the concept of a ``free PROP on an operad'' and the monoidal envelope of \cite[\S2.2.4]{HA}.
    For example, the envelope of the terminal \operad{} $\Ebb_\infty$ is the symmetric monoidal category of finite sets under disjoint union $\Env(\Ebb_\infty) = \Fin$.
    More generally, if $\Ebb_n$ denotes the little $n$-disk operad, then its envelope is $\Env(\Ebb_n) = \Disk_n^{\rm fr}$, the symmetric monoidal \category{} of disjoint unions of framed $n$-disks and framing-preserving%
    \footnote{
        To be precise we take as objects disjoint unions of standard disks $D^n \subset \Rbb^n$ with their induced framing and embeddings $\sqcup_k D^n \hookrightarrow \sqcup_l D^n$ such that on each component of the source the restricted framing agrees with the standard framing up to rescaling.
        These embeddings are also known as rectilinear.
        (Equivalently, we could also consider embeddings that come with an isotopy between the restricted framing and the standard framing.)
    } 
    embeddings between them.
\end{example}

The envelope of the terminal properad $\Ocal = *$ is the \category{} of cospans of finite sets
\[
    \Env(*) \stackrel{!}{=} \Csp
\]
where objects are finite sets and a morphism $A \to B$ is a cospan $A \to X \leftarrow B$.
The general case of $\Env(\Ocal)$ can be thought of as a labelled version of $\Csp$: objects are finite sets labelled by the colours of $\Ocal$ and morphisms are cospans of finite sets labelled by operations of $\Ocal$.
For example, we recover the surface category as the envelope
\[
    \Env(\Bcal_2) \stackrel{!}{=} \Bord_2
\]
and more generally the envelope of $\Bcal_d$ is $\Bord_d$ and the envelope of the handlebody modular operad $\Hcal$ from \cref{ex:handlebody-modular-operad} is the handlebody bordism category, which is a subcategory $\Hbdy \subset \Bord_3^\partial$ whose objects are disjoint unions of $2$-disks and whose morphisms are disjoint unions of handlebodies.

\begin{rem}
    By slicing over $\Env(*) = \Csp$ the adjunction also gives an alternative description of \properads{}.
    Let $\Prpd^{\rm cpl} \subset \Prpd$ denote the full subcategory of complete \properads{} (see \cref{defn:complete}).
    Then the envelope induces a fully faithful functor
    \[
        (\Env(-) \to \Env(*))\colon \Prpd^{\rm cpl} \hookrightarrow \SMover{\Csp}
    \]
    whose essential image consists of those symmetric monoidal functors $\pi\colon \Pcal \to \Csp$ that are ``equifibered'' in the sense that the square
    \[\begin{tikzcd}
        \Pcal \times \Pcal \ar[r, "\otimes"] \ar[d, "\pi \times \pi"'] \ar[dr, phantom, very near start, "\lrcorner"]&
        \Pcal \ar[d, "\pi"] \\
        \Csp \times \Csp \ar[r, "\sqcup"] & \Csp
    \end{tikzcd}\]
    is cartesian.
    The functor $\pi\colon \Pcal \to \Csp$ factors through the subcategory $\Fin \subset \Csp$ of ``forward maps'' if and only if $\Pcal$ is an operad, i.e.~if and only if every operation in $\Pcal$ has exactly one output.
    In this case \cref{thm:envelope} recovers the main result of \cite{HK21}, see also \cite[\S4.3]{envelopes}.
\end{rem}

Using computations of free \properads{} one can determine the colours and operations of $\Ucal(\Ccal)$ for any $\Ccal \in \SM$.
It turns out to be an $\infty$-categorical version of the multi-colour modular operad from \cref{ex:underlying}.
Its colours are the objects of $\Ccal$ and its operations are maps between tensor products in $\Ccal$:
\[
    \Ucal(\Ccal)(\fre) = \Ccal^\simeq
    \qqand
    \Ucal(\Ccal)(\frc_{k,l}) = \colim_{\substack{x_1,\dots,x_k\\y_1,\dots,y_l} \in \Ccal^\simeq} \Map_\Ccal(x_1 \otimes \dots \otimes x_k, y_1 \otimes \dots \otimes y_l)
\]
The value of $\Ucal(\Ccal)$ on a graph $\Gamma$ is thus the space of labellings of edges by objects of $\Ccal$ and vertices by morphisms such that the source of the morphisms is the tensor product of the incoming edge labellings and the target is the tensor product of the outgoing edge labellings.
Thus, $\Ucal(\Ccal)$ coherently implements the sketched description from \cref{ex:underlying-sketch}.

\subsubsection{Algebras over \properads.}
Using the envelope adjunction we can define algebras over an \properad{} in a symmetric monoidal \category{}.

\begin{defn}
    Let $\Ccal \in \SM$ be a symmetric monoidal \category{} and $\Pcal \in \Prpd$ an \properad{}.
    Then we define the space of $\Pcal$-algebras in $\Ccal$ as the space of properad maps $\Pcal \to \Ucal(\Ccal)$
    \[
        \hldef{\Alg_\Pcal^{\rm prpd}(\Ccal)} \coloneq \Map_{\Prpd}(\Pcal, \Ucal(\Ccal)).
    \]
\end{defn}

Because of the adjunction $\Env\dashv \Ucal$ we can equivalently describe algebras as symmetric monoidal functors out of the envelope:
\[
    \Alg_\Pcal^{\rm prpd}(\Ccal) = \Map_{\Prpd}(\Pcal, \Ucal(\Ccal))
    \simeq \Map_{\SM}(\Env(\Pcal), \Ccal).
\]
In fact, this description might be preferable, as it defines an \emph{\category{}} of algebras over $\Pcal$ (where morphisms are symmetric monoidal natural transformations) rather than just an \groupoid{}.
If $\Pcal$ is an \properad{} where every operation has exactly one output (i.e.~it is an \operad{}) then this recovers the usual definition of algebras over an \operad{} as for example defined in \cite{HA}.

As we have that $\Env(\Bcal_d) = \Bord_d$ we get that $\Ccal$-valued TFTs are algebras over the properad $\Bcal_d$:
\[
    \Fun^\otimes(\Bord_d, \Ccal) \simeq \Alg_{\Bcal_d}^{\rm prpd}(\Ccal)
\]
(One can show that the left side is always a groupoid.)

\subsection{Completeness}
We briefly need to discuss the (slightly annoying) subject of ``completeness'' for modular operads.
(This was mostly skipped in the lecture.)
Let us first consider the case of $\Dop$-Segal spaces.
The \hldef{Rezk nerve} is the functor
\begin{align*}
    \xN\colon \Cat &\too \Fun(\Dop, \An) \\
    \Ccal &\longmapsto ([n] \mapsto \Map_{\Cat}([n], \Ccal))
\end{align*}
that turns a \category{} $\Ccal$ into the simplicial space $\xN_\bullet \Ccal$ which remembers the spaces of functors $[n] \to \Ccal$ for all $[n] = \{0\le \dots \le n\} \in \Dop$.
For example, $\xN_0\Ccal = \Ccal^\simeq$ is the space of objects of $\Ccal$ and $\xN_1\Ccal = (\Ar(\Ccal))^\simeq$ is the space of morphisms of $\Ccal$.
It's a theorem of Joyal--Tierney \cite{JT06, HS25-rezk-nerve} that this functor is fully faithful, and its essential image are those simplicial spaces that are ``complete Segal spaces'', i.e.~that
\[
    \xN\colon \Cat \xtoo{\simeq} \CSeg(\Dop) \subset \Fun(\Dop, \An).
\]
Here a simplicial space $X_\bullet$ is called Segal if the canonical maps
\[
    (\rho_1, \dots, \rho_n)\colon X_n \too X_1 \times_{X_0} \dots \times_{X_0} X_1 
\]
are equivalence for all $n$.
(To make sense of this just think of $\Dop$ as a subcategory of $\Gr$ whose objects are the linear graphs $[n] \hat{=} \mfr{l}_n$, though note that this is not a full subcategory.)

To define completeness, let $X_1^{\rm eq} \subset X_1$ denote the subspace of those $1$-simplices that are ``equivalences'' in the sense that they admit both-sided inverses under the composition defined via the Segal condition.
The degeneracy map $s_0$ always factors as
\[
    s_0\colon X_0 \too X_1^{\rm eq} \subseteq X_1
\]
and $X_\bullet$ is called \hldef{complete} if the map $X_0 \to X_1^{\rm eq}$ is an equivalence.

\begin{exc}
    Convince yourself that $\xN_\bullet \Ccal$ is always a complete Segal space.
\end{exc}

\begin{defn}\label{defn:complete}
    A modular \operad{} $\Ocal \colon \Gr \to \An$ is called \hldef{complete} if the restriction
    \[
        \Dop \xtoo{[n] \mapsto \mfr{l}_n} \Gr \xtoo{\Ocal} \An
    \]
    is complete. (It is always a Segal space.)
    We make the same definition for properads, cyclic operads, etc.
\end{defn}

Let us denote by $\ModOp^{\rm cpl} \subset \ModOp$ denote the full subcategory of complete modular operads. 
It then follows by abstract nonsense that there is a localization adjunction
\[
    L\colon \ModOp \adj \ModOp^{\rm cpl} \cocolon \mrm{include}
\]
where $L$ is the \hldef{completion functor}.
The non-formal part is the following.
\begin{thm}[\cite{modular}]\label{thm:completion}
    A morphism $f\colon \Ocal \to \Pcal$ is inverted by $L$ if and only if it is a \hldef{Dwyer--Kan equivalence}, i.e.~it satisfies the following two properties:
    \begin{enumerate}
        \item Fully faithful: for all $k \ge 0$ the square
        \[\begin{tikzcd}
            \Ocal(\frc_k) \ar[d] \ar[r, "f"] & \Pcal(\frc_k) \ar[d] \\
            \Ocal(\fre)^{\times k} \ar[r, "f"] & \Pcal(\fre)^{\times k}
        \end{tikzcd}\]
        is cartesian.
        \item Essentially surjective: 
        for every colour $x \in \Pcal(\fre)$ there exists a colour $y \in \Ocal$ such that $f(y)$ is ``isomorphic'' to $x$ in the sense that there are invertible $2$-ary operations relating them.
    \end{enumerate}
\end{thm}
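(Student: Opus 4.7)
The plan is to establish the biconditional via three auxiliary facts: (a) the unit $\eta_\Ocal\colon \Ocal \to L\Ocal$ of the completion adjunction is always a DK-equivalence, (b) the class of DK-equivalences satisfies $2$-out-of-$3$ (immediate from the defining cartesian-square and $\pi_0$-surjectivity conditions, both of which are $2$-out-of-$3$ stable), and (c) a DK-equivalence between complete modular operads is an equivalence. Granted (a), (b), and (c), the theorem follows by chasing the naturality square of $\eta$: if $f$ is DK then $Lf$ is DK between complete operads (by (a) and (b)), hence an equivalence by (c); conversely if $Lf$ is an equivalence, then $f$ is DK by applying (b) to $\eta_\Ocal$ and $\eta_\Pcal$.

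For (a), the key observation is that ``incompleteness'' of a Segal modular operad is entirely localized along the linear-graph subcategory $\Dop \hookrightarrow \Gr$, $[n]\mapsto \mfr{l}_n$, since completeness is imposed only there. I would reduce (a) to Rezk's statement for complete Segal spaces, perhaps realising $L$ by a small-object argument driven by a single generator -- the ``universal'' map of modular operads corresponding to Rezk's interval pushout $J \to \Delta^0$ -- and observing that this generating map is tautologically DK: it is an isomorphism on every $\frc_k$ and adjoins a quasi-inverse to a given invertible $2$-ary operation, realising essential surjectivity on $\fre$ by design. Saturation of the DK-class under transfinite composition and pushout (which must be checked) then yields (a).

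The main content is (c). Let $f\colon \Ocal \to \Pcal$ be DK between complete modular operads. The Segal condition of \cref{defn:modular-operad} reduces the problem to showing that $f$ is an equivalence on all elementary graphs $\fre$ and $\{\frc_k\}_{k\ge 0}$. Fully faithfulness (the cartesian-square condition) propagates an equivalence at $\fre$ to equivalences at every $\frc_k$, so it suffices to show $f\colon \Ocal(\fre) \to \Pcal(\fre)$ is an equivalence. I would do this by restricting to linear graphs to obtain complete Segal spaces $\Ocal \circ \mfr{l}_\bullet$ and $\Pcal \circ \mfr{l}_\bullet$; the DK conditions translate into fully faithfulness and essential surjectivity of the induced map of complete Segal spaces (using $\mfr{l}_0 = \fre$, $\mfr{l}_1 = \frc_2$, and that the invertible $2$-ary operations appearing in the essential-surjectivity clause are exactly the invertible morphisms in the Segal sense). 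The classical Rezk argument -- fully faithful plus essentially surjective plus completeness forces an equivalence -- then yields $\Ocal(\fre)\simeq \Pcal(\fre)$.

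The main obstacle is unwinding the translation between ``intrinsic'' DK-equivalence of modular operads and DK-equivalence for the restricted linear-graph Segal space. In particular, one must verify that the invertibility condition of the essential-surjectivity clause, phrased in terms of $2$-ary operations of $\Ocal$, agrees with the Segal-theoretic notion of equivalence for $\Ocal\circ \mfr{l}_\bullet$, and that $2$-ary Segal composition in this restricted Segal space genuinely recovers the $\infty$-categorical composition. These compatibilities are expected consequences of the Segal-over-patterns formalism of \cite{patterns1} but need to be carefully pinned down; once they are in place, the remainder is essentially Rezk's classical completeness argument applied fibrewise over colours.
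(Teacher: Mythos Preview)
The paper does not prove this theorem; it is cited as a black-box result from \cite{modular}, so there is no ``paper's own proof'' to compare against. Your outline follows the standard three-step strategy used for complete Segal spaces and for Segal objects over algebraic patterns more generally (Rezk, Chu--Haugseng), and steps (b) and (c) are essentially correct as sketched: the reduction of (c) to the restricted Segal space along $[n]\mapsto \mfr{l}_n$ is exactly how completeness is defined in the paper, and the translation you flag between ``invertible $2$-ary operation'' and ``equivalence in the underlying Segal space'' is routine once unwound.

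The genuine soft spot is (a). Running a small-object argument and then invoking ``saturation of the DK-class under pushout and transfinite composition'' hides the hardest step: DK-equivalences of Segal objects are \emph{not} obviously closed under pushouts in $\Fun(\Gr,\An)$, because pushouts of Segal objects are computed by first taking the levelwise pushout and then Segalifying, and Segalification can in principle disturb both the fully-faithfulness square and essential surjectivity. In Rezk's original paper this closure is a substantial technical lemma, and in the pattern setting one typically sidesteps it by instead constructing the completion explicitly (replacing $\Ocal(\fre)$ by the classifying space of the underlying $\infty$-category while leaving multi-mapping spaces unchanged) and verifying directly that this construction is a DK-equivalence landing in complete objects. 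Your proposal would go through, but you should either supply the pushout-stability argument or switch to the explicit-completion route; as written, ``which must be checked'' is doing a lot of work.
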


\subsection{Rigid properads}

\subsubsection{Multi-mapping spaces and composition.}
For a properad $\Pcal$ and colours $x_1,\dots,x_k, y_1,\dots,y_l \in \Pcal(\fre)$ we define the \hldef{multi-mapping-space} as the pullback
\[\begin{tikzcd}
    \hldef{\Pcal(x_1,\dots,x_k; y_1,\dots,y_l)} \ar[r] \ar[d] \ar[dr, very near start, phantom, "\lrcorner"] & \Pcal(\frc_{k,l}) \ar[d] \\
    \{((x_1,\dots, x_k), (y_1,\dots, y_l))\} \ar[r] & \Pcal(\fre)^{\times k} \times \Pcal(\fre)^{\times l}
\end{tikzcd}\]
Note that with this notation a map of properads is fully faithful in the sense of \cref{thm:completion} if and only if it induces equivalences on all multi-mapping-spaces.

For every properad $\Pcal$ and every two-vertex graph $\Gamma \in \daGr$ that is built from two corollas $\frc_{k,l+r}$ and $\frc_{r+k',l'}$ by gluing $r$ of their edges (see \cref{fig:elementary-composition}), the Segal condition allows us to define a composition map as a zig-zag
\[
    \Pcal(\frc_{k,l+r}) \times_{\Pcal(\fre)^{\times r}} \Pcal(\frc_{r+k',l'}) \xleftarrow[\qquad]{\simeq} \Pcal(\Gamma) \too \Pcal(\frc_{k+k',l+l'}).
\]
On multi-mapping spaces this induces maps 
\begin{align*}
    \circ_{w_1,\dots,w_r} \colon &
    \Pcal(w_1,\dots,w_r,x_1',\dots,x_{k'}'; y_1',\dots,y_{l'}') \times \Pcal(x_1,\dots,x_k; y_1,\dots,y_l,w_1,\dots,w_r) \\
    &\too \Pcal(x_1,\dots,x_k, x_1',\dots,x_{k'}'; y_1,\dots,y_l,y_1',\dots,y_{l'}')
\end{align*}
which are subject to certain associativity constraints.
This combinatorial nightmare is neatly encoded in the category $\daGr$ of directed acyclic graphs.
See \cite{HRY15} for a more careful description of this structure and for the proof that it is faithfully encoded in $\Fun(\daGr, \Sets)$.

\begin{figure}[ht]
    \centering
    \def\svgwidth{.8\linewidth}
\begingroup%
  \makeatletter%
  \providecommand\color[2][]{%
    \errmessage{(Inkscape) Color is used for the text in Inkscape, but the package 'color.sty' is not loaded}%
    \renewcommand\color[2][]{}%
  }%
  \providecommand\transparent[1]{%
    \errmessage{(Inkscape) Transparency is used (non-zero) for the text in Inkscape, but the package 'transparent.sty' is not loaded}%
    \renewcommand\transparent[1]{}%
  }%
  \providecommand\rotatebox[2]{#2}%
  \newcommand*\fsize{\dimexpr\f@size pt\relax}%
  \newcommand*\lineheight[1]{\fontsize{\fsize}{#1\fsize}\selectfont}%
  \ifx\svgwidth\undefined%
    \setlength{\unitlength}{640.75942152bp}%
    \ifx\svgscale\undefined%
      \relax%
    \else%
      \setlength{\unitlength}{\unitlength * \real{\svgscale}}%
    \fi%
  \else%
    \setlength{\unitlength}{\svgwidth}%
  \fi%
  \global\let\svgwidth\undefined%
  \global\let\svgscale\undefined%
  \makeatother%
  \begin{picture}(1,0.18965883)%
    \lineheight{1}%
    \setlength\tabcolsep{0pt}%
    \put(0,0){\includegraphics[width=\unitlength,page=1]{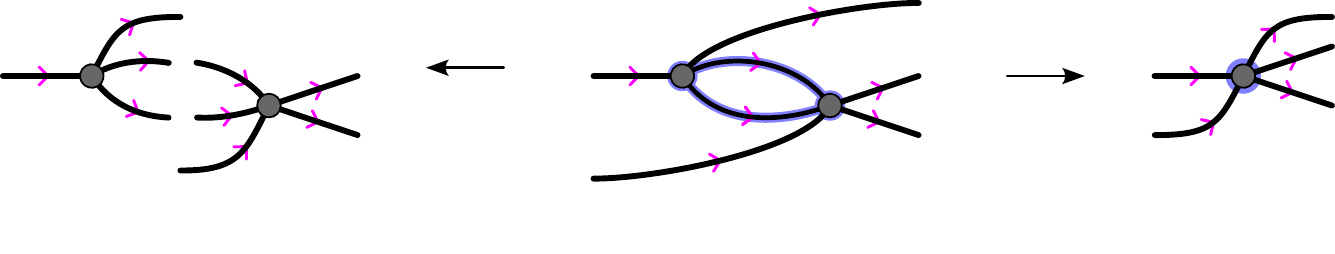}}%
    \put(0.46714656,0.00233243){\color[rgb]{0,0,0}\makebox(0,0)[lt]{\lineheight{1.25}\smash{\begin{tabular}[t]{l}$\Gamma = \frc_{2,3} \cup_{\fre \sqcup \fre} \frc_{3,2}$\end{tabular}}}}%
    \put(0.04726621,0.00233243){\color[rgb]{0,0,0}\makebox(0,0)[lt]{\lineheight{1.25}\smash{\begin{tabular}[t]{l}$\frc_{1,3}$\end{tabular}}}}%
    \put(0.90992369,0.00233243){\color[rgb]{0,0,0}\makebox(0,0)[lt]{\lineheight{1.25}\smash{\begin{tabular}[t]{l}$\frc_{2,3}$\end{tabular}}}}%
    \put(0.17998275,0.00233243){\color[rgb]{0,0,0}\makebox(0,0)[lt]{\lineheight{1.25}\smash{\begin{tabular}[t]{l}$\frc_{3,2}$\end{tabular}}}}%
    \put(0,0){\includegraphics[width=\unitlength,page=2]{elementary-composition.pdf}}%
  \end{picture}%
\endgroup%

    \caption{The graphs used in defining the composition maps. The right map collapses the internal edges (highlighted in blue) and the left maps are inert maps that restrict to the two corollas.}
    \label{fig:elementary-composition}
\end{figure}

\begin{example}
    For the underlying properad $\Ucal(\Ccal)$ of a symmetric monoidal \category{} $\Ccal \in \SM$ we get
    \[
        \Ucal(\Ccal)(x_1,\dots,x_k; y_1,\dots,y_l) = \Map_\Ccal(x_1 \otimes \dots \otimes x_k, y_1 \otimes \dots \otimes y_l).
    \]
    Here the composition morphisms are literally given by composing, at least after we suitably tensor with identity morphisms:
    \[
        \beta \circ_{w_1,\dots,w_r} \alpha = (\id_{y_1\otimes \dots\otimes y_l} \otimes \beta) \circ (\alpha \otimes \id_{x_1' \otimes \dots \otimes x_{k'}'}).
    \]
\end{example}

\subsubsection{Duality in properads.}
Using the notation of multi-mapping spaces we can translate the notion of a dualisable object in a (symmetric) monoidal category to the setting of properads.
\begin{defn}
    Two colours $x,y \in \Pcal(\fre)$ are called \hldef{dual} if there exist operations $e \in \Pcal(y,x; \emptyset)$ and $c \in \Pcal(\emptyset; x,y)$ such that the two composites
    \[
        e \circ_x c \simeq \id_y \in \Pcal(y;y)
        \qqand
        e \circ_y c \simeq \id_x \in \Pcal(x;x)
    \]
    are homotopic to the respective identity operations.
    In this case we say that $e$ is a \hldef{duality pairing} and $c$ a \hldef{duality copairing} between $x$ and $y$.
    A properad $\Pcal$ is called \hldef{rigid} if all its colours are dualizable, and we let
    \[
        \hldef{\Prpd^{\rig}} \subset \Prpd
    \]
    denote the full subcategory on the rigid properads.
\end{defn}

\begin{example}
    Two colours of the underlying modular operad $\Ucal(\Ccal)$ are dual if and only if they are dual (in the usual sense for monoidal categories) as objects of the symmetric monoidal category $\Ccal$.
\end{example}

\begin{example}
    If $\Ocal$ is a modular operad, then we can obtain a properad $\varphi^\ast\Ocal$ by restricting along $\varphi\colon \daGr \to \Gr$.
    This properad is always rigid: the automorphism of $\fre \in \Gr$ induces a map $(-)^\vee\colon \Ocal(\fre) \to \Ocal(\fre)$ and the identity operation $\id_x \in \Ocal(\frc_2)$ induces both $e \in (\varphi^*\Ocal)(x^\vee,x; \emptyset)$ and $c \in (\varphi^*\Ocal)(\emptyset;x,x^\vee)$, which compose to the identity.
\end{example}

\subsubsection{Rigid properads are modular operads.}
If a properad is rigid, the duals and duality (co)pairings turn out to be unique in a suitable sense.
Thus, duality induces an involution $(-)^\vee\colon \Pcal(\fre) \to \Pcal(\fre)$ on the space of colours, and we can use the (co)pairings to turn outputs into inputs and vice versa:
\[
    \Pcal(x_1,\dots,x_k; y, z_1,\dots, z_l) \simeq \Pcal(x_1, \dots, x_k, y^\vee; z_1,\dots,z_l)
\]
Thus we only really need to remember operations without outputs and a way of gluing them, in other words, this data should be equivalent to that of a modular operad.
This is precisely the content of the following theorem.
\begin{thm}[\cite{modular}]\label{thm:rigid properads}
    Restriction along the forgetful functor $\varphi\colon \daGr \to \Gr$ induces an equivalence
    \[
        \ModOp^{\cpl} \xtoo{\simeq} \Prpd^{\cpl, \rig} \subset \Prpd
    \]
    between the \category{} of complete modular operads and the \category{} of complete and rigid properads.
\end{thm}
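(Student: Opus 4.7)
The plan is to exhibit an explicit inverse to $\varphi^*$ and to verify the unit and counit by comparison on elementary graphs. First, I note that $\varphi^*$ preserves completeness, since the linear graphs $\mfr{l}_n$ that witness completeness embed in both $\daGr$ and $\Gr$, and that $\varphi^*\Ocal$ is rigid for every modular operad $\Ocal$ by the example preceding the theorem: the $\Ctwo$-automorphism of $\fre\in\Gr$ supplies the duality involution on colours, and the identity operation in $\Ocal(\frc_2)$ gives both a pairing and a copairing that satisfy the triangle identities up to homotopy.

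Next, I would construct an inverse $\Psi\colon\Prpd^{\cpl,\rig}\to\ModOp^{\cpl}$. Given a complete rigid properad $\Pcal$, set $\Psi(\Pcal)(\fre):=\Pcal(\fre)$ with the $\Ctwo$-action given by $(-)^\vee$, and $\Psi(\Pcal)(\frc_k):=\Pcal(\frc_{0,k})$ equipped with an extended $\Sigma_k$-action obtained by combining the tautological action on outputs with duality ``flips'' on arbitrary subsets of legs. Extend $\Psi(\Pcal)$ to $\Gr^\xint$ by right Kan extension from $\Gr^\el$, so that the Segal condition holds by construction. Extend to active morphisms in $\Gr$ by using the existing active structure of $\Pcal$ for maps that factor through $\daGr$, and by using the duality pairings and copairings to implement the remaining ``modular'' operations---in particular the edge-contractions producing loops and the self-gluings creating directed cycles, which have no direct analogue in $\daGr$.

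Third, I would verify that both unit and counit are equivalences. For the unit $\Pcal\to\varphi^*\Psi(\Pcal)$, Segal-ness on $\daGr$ reduces the check to elementary directed corollas $\frc_{p,q}$, where $\varphi^*\Psi(\Pcal)(\frc_{p,q})=\Psi(\Pcal)(\frc_{p+q})\simeq\Pcal(\frc_{0,p+q})\simeq\Pcal(\frc_{p,q})$, the last equivalence using duality to convert $p$ outputs into inputs. For the counit $\Psi(\varphi^*\Ocal)\to\Ocal$, compare values on elementary graphs in $\Gr$ and verify that the $\Sigma_k$-action reassembled from the $\Sigma_k$-action on $(\varphi^*\Ocal)(\frc_{0,k})\simeq\Ocal(\frc_k)$ together with duality recovers the original modular $\Sigma_k$-action on $\Ocal(\frc_k)$; this is automatic because the rigid structure on $\varphi^*\Ocal$ was constructed from the ambient modular symmetries in the first place.

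The main obstacle is step two: coherently extending $\Psi(\Pcal)$ to active morphisms in $\Gr$. This requires showing that $\Gr$ is generated, in an appropriate sense, by $\daGr$ together with the duality data (pairing, copairing, and their coherences), so that the resulting $\Psi(\Pcal)$ is a well-defined functor independent of the chosen factorizations. A rigorous argument would lean on a presentation of $\Gr$ analogous to Hackney--Robertson--Yau's $\mbf{U}^\op$ and on the algebraic pattern framework of \cite{patterns1}, invoking the triangle identities for duality in $\Pcal$ to check that any two factorizations of an active morphism into pieces of $\daGr$ plus duality applications give homotopic results.
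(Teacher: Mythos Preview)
Your proposal correctly identifies where the difficulty lies, but it does not resolve it, and the paper takes a genuinely different route that does.

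Your plan is to define $\Psi(\Pcal)$ on elementaries, right Kan extend along $\Gr^\el \hookrightarrow \Gr^\xint$, and then extend to actives using duality data. As you say yourself, the ``main obstacle is step two''. The problem is not merely a presentation of $\Gr$ by generators and relations: for an active map $\Gamma \actto \frc_k$ where $\Gamma$ has loops or directed cycles, there are many inequivalent ways to break $\Gamma$ into a directed acyclic piece plus duality (co)pairings, and you need a \emph{contractible space} of such choices, not just a check that any two choices agree up to homotopy. A presentation-by-relations argument would give you compatibility on $\pi_0$ but not the higher coherences needed for an $\infty$-functor.

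The paper's approach avoids defining $\Psi(\Pcal)$ on elementaries first. Instead, for \emph{every} graph $\Gamma \in \Gr$ it considers the category of directed acyclic subdivisions $(\Gamma \hookrightarrow \Gamma', \fro)$, and defines $\Qcal(\Gamma) \subset \Pcal(\Gamma', \fro)$ as the subspace of labellings in which each new (bivalent) vertex is labelled by an invertible morphism, a duality pairing, or a duality copairing according to the local orientation. Completeness (for the invertible morphisms) and rigidity (for the (co)pairings) guarantee that this subspace is independent of the chosen $(\Gamma', \fro)$ up to equivalence; the key geometric input is that the category of directed acyclic subdivisions of $\Gamma$ is weakly contractible, so the choice is canonical. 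Functoriality in $\Gamma$ then comes along for free, since graph maps act compatibly on subdivisions. This is exactly the coherence machine your step two is missing: rather than factoring actives through $\daGr$ plus duality moves and checking relations, the paper parametrises all such factorisations at once and shows the parameter space is contractible.
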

\begin{proof}[Proof idea]
    On the most basic level, what we need to do is to show that given a complete rigid properad $\Pcal$, we can construct a functor $\Qcal\colon \Gr \to \An$ with $\varphi^*\Qcal \simeq \Pcal$.
    The actual theorem is just a more precise version of this statement.
    To define $\Qcal$ on a graph $\Gamma \in \Gr$ we proceed as follows.
    Pick $\Gamma \hookrightarrow \Gamma'$ a subdivision of $\Gamma$ that admits an acyclic orientation, and pick such an acyclic orientation $\fro$.
    ($\Gamma$ itself might not be in the essential image of $\daGr \to \Gr$ as it might have loops, i.e.~edges whose source and target agree.)
    Given such a directed acyclic subdivision, we could try to set
    \(
        \Qcal(\Gamma) \coloneq \Pcal(\Gamma',\fro),
    \)
    but this does not have a chance of yielding a well-defined functor on $\Gr$ as even the homotopy type of $\Qcal'(\Gamma)$ depends on the way we chose to subdivide $\Gamma \hookrightarrow \Gamma'$.
    However, we can define a subspace
    \[
        \Qcal(\Gamma) \subset \Pcal(\Gamma', \fro)
    \]
    where we only allow those labelling of $\Gamma'$ by $\Pcal$, which ``label each new vertex invertibly''.
    More precisely, this means that we take only those $x \in \Pcal(\Gamma', \fro)$ such that for every bivalent vertex $\alpha\colon \Gamma' \intto \frc_2$, which did not exist in $\Gamma$, the resulting element $\alpha_*(x) \in \Pcal(\frc_2,\fro_{|\dots})$ is an invertible morphism if it's in $\Pcal(\frc_{1,1})$, a duality pairing if it's in $\Pcal(\frc_{2,0})$, and a duality copairing if it's in $\Pcal(\frc_{0,2})$.
    Using completeness and rigidity one can then show that any two choices of directed acyclic subdivision of $\Gamma$ yield equivalent results for $\Qcal(\Gamma)$.
    Moreover, one can show that the category of directed acyclic subdivisions of $\Gamma$ is weakly contractible and that thus we have only made a contractible choice in picking one.
    Making all of the above precise essentially yields a proof of the theorem.
\end{proof}

\subsubsection{The modular operad of dualisable objects.}
The simplest way of obtaining rigid properads is to just discard all the colours that are not dualisable.
We let $\Pcal^\dual \subset \Pcal$ denote the full subproperad on the dualisable colours.
This defines a right adjoint
\[
    \begin{tikzcd}
        \Prpd^\rig \ar[r, hook, shift left = 1] & \ar[l, shift left = 1] \Prpd \cocolon (-)^\dual
    \end{tikzcd}
\]
(This full inclusion also has a left adjoint for formal reasons, but this left adjoint freely adds duals and is generally not as easy to compute.)
In particular, when $\Ccal$ is a symmetric monoidal \category{}, then $(\Ucal(\Ccal))^\dual \subset \Ucal(\Ccal)$ is a complete and rigid properad.
By \cref{thm:rigid properads} it thus acquires the structure of a modular operad.
Concatenating adjunctions we get a right adjoint
\[
    \Env\colon \ModOp^\cpl \simeq \Prpd^{\cpl,\rig} \adj \Prpd^\cpl \adj \SM \cocolon \Udual
\]

\begin{example}\label{ex:Udual}
    The value of $\Udual(\Ccal)$ on a graph $\Gamma$ is the space of ``labellings of $\Gamma$ by dualisable objects in $\Ccal$''.
    Such a labelling consists of a $\Ctwo$-equivariant map $x\colon A_\Gamma \to \Ccal^\simeq$ and a choice of $\alpha_v \in \Map(\bigotimes_{a \in s^{-1}(v)} x_a, \unit)$ for all $v \in V_\Gamma$, see \cref{fig:Ucal-dual-labelling}.
    This means that to each edge $\{a,a^\dagger\}$ we assign a pair of an object $x_a$ and its dual $x_{a^\dagger} = x_a^\vee$ and to each vertex a morphism $\alpha_v \colon x_{a_1} \otimes \dots \otimes x_{a_k} \to \unit$ where $a_1, \dots, a_k$ are the arcs that end at $v$.
    The functoriality of $\Udual(\Ccal)$ is such that when we contract an edge $\{a,a^\dagger\}$ the two incident labels $\alpha_{s(a)}$ and $\alpha_{s(a^\dagger)}$ are ``contracted'' using the pairing $x_a \otimes x_{a^\dagger} \to \unit$.
    That this sketched data indeed can be assembled into a modular operad is one of the main applications of \cref{thm:rigid properads}.
    We can think of $\Udual(\Ccal)$ and its functoriality as encoding the ``string calculus for dualisable objects in $\Ccal$''.
\end{example}

\begin{figure}[ht]
    \centering
    \def\svgwidth{.8\linewidth}
    \import{figures/}{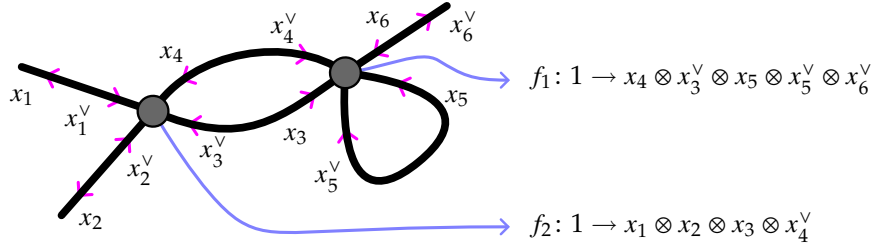}
    \caption{A graph $\Gamma$ and a labelling that by dualizable objects and morphisms in a symmetric monoidal \category{} $\Ccal$ that defines a point in $\Udual(\Ccal)(\Gamma)$.
    Note that the arcs are labelled by dualizable objects such that the dual arc is labelled by the dual object -- without choosing an orientation of the graph this is not the same as a map $E(\Gamma) \to \Ccal^{\dbl}$.}
    \label{fig:Ucal-dual-labelling}
\end{figure}

\begin{rem}
    The modular operad $\Ucal_{\rm vect}$ from \cref{ex:underlying} is Dwyer--Kan equivalent to the modular operad $\Ucal(\Vect)$.
    The only difference between the two is that in $\Ucal_{\rm vect}$ we chose symmetric self-duality data for each vector space, but we can forget this extra data to get a map $\Ucal_{\rm vect} \to \Ucal(\Vect)$.
    (Precisely defining this map would require opening the black-box \cref{thm:envelope}, which we won't do here.)
    This map is fully faithful (assuming that the black-box does what it's supposed to) and it is also essentially surjective because every vector space admits a self-duality. 
    
    Note that it is sometimes convenient to build a version of $\Ucal(\Ccal)$ where the colours aren't just object of $\Ccal$, but rather self-dual objects of $\Ccal$ equipped with symmetric self-duality data.
    This equivalently amounts to replacing the space of objects of $\Ucal(\Ccal)$ with its $\Ctwo$-fixed points.
    The reason this can be useful is that in the resulting modular operad $\Ucal^{\rm self-dual}(\Ccal)$ the action of $\Ctwo$ on $\Ucal^{\rm self-dual}(\Ccal)(\fre)$ is trivial.
    There'll always be a fully faithful comparison map $\Ucal^{\rm self-dual}(\Ccal) \to \Udual(\Ccal)$, but in general it won't be essentially surjective as objects in $\Ccal$ need not be isomorphic to their dual.
    Moreover, it can be a little confusing to work with $\Ucal^{\rm self-dual}(\Ccal)$ as it is not complete: for example, there are several non-isomorphic self-dualities on $\Rbb^n$ (e.g.~the positive definite and the negative definite one), but they all define isomorphic colours in the modular operad $\Ucal^{\rm self-dual}(\Ccal)$ in the sense that there are invertible binary operations between them.
    (This is analogous to how we might define a category whose objects are finite dimensional Hilbert spaces, but where morphisms are just arbitrary linear maps -- such a category is just equivalent to the category of finite dimensional vector space.)
\end{rem}

\subsubsection{TFTs as modular algebras.}
We define modular algebras analogously to how we defined algebras over properads.
\begin{defn}
    For a modular operad $\Ocal$ and a symmetric monoidal \category{} $\Ccal$ we define the \category{} of modular $\Ocal$-algebras in $\Ccal$ as
    \[
        \hldef{\Algmod_\Ocal(\Ccal)} \coloneq \Map_{\ModOp}(\Ocal, \Udual(\Ccal)).
    \]
\end{defn}
By \cref{thm:rigid properads} this agrees with the \category{} $\Alg_{\varphi^*\Ocal}^{\rm prpd}(\Ccal)$.
Combining all the above we have
\[
    \Fun^\otimes(\Bord_d, \Ccal) \simeq \Map_{\Prpd}(\Bcal_d, \Ucal(\Ccal)) \simeq \Map_{\ModOp}(\Bcal_d, \Udual(\Ccal)) = \Algmod_{\Bcal_d}(\Ccal),
\]
so a $\Ccal$-valued $d$-dimensional TFT is equivalent to a modular $\Bcal_d$-algebra in $\Ccal$.

\subsubsection{The $1$D cobordism hypothesis.}
As another application of \cref{thm:rigid properads} we can classify TFTs in dimension $1$, in accordance with the Baez--Dolan cobordism hypothesis in dimension $1$ \cite{baez1997higher}.
To do this, we will need the adjunction
\[
    \Free\colon \An^{\BCtwo} \adj \ModOp \cocolon \col
\]
where the right adjoint $\col$ sends $\Ocal$ to its space of colours $\col(\Ocal) = \Ocal(\fre)$.
The left adjoint exists for formal reasons, but it also turns out to be fairly computable.
We'll see more examples of how to do this later -- it basically amounts to computing a left and a right Kan extension, as explained in a \cref{exc:B1-is-free}.
For now, we'll use the following fact as a black-box.
\begin{lem}\label{lem:B1-is-free}
    The $1$-dimensional manifold modular operad $\Bcal_1$ is equivalent to the free modular operad $\Free(\Ctwo)$.
\end{lem}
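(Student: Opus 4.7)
The plan is to construct the counit $\eta\colon \Free(\Bcal_1(\fre)) \to \Bcal_1$ coming from the adjunction $\Free \dashv \col$ and to verify it is a Dwyer--Kan equivalence by invoking \cref{thm:completion}. A connected closed oriented $0$-manifold is an oriented point, so $\Bcal_1(\fre) \simeq \Ctwo$ with the swap $\Ctwo$-action given by orientation reversal; then $\eta$ is the modular operad map adjoint to $\mrm{id}_{\Ctwo}$. By construction $\eta$ is an equivalence on $\fre$, so it is essentially surjective on colors, and it remains to check fully faithfulness, i.e.~that $\Free(\Ctwo)(\frc_k) \to \Bcal_1(\frc_k)$ is an equivalence over $\Ctwo^{\times k}$ for each $k$.

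The right-hand side is easy to read off from the classification of connected compact oriented $1$-manifolds, of which there are only $S^1$ and $[0,1]$. Concretely, $\Bcal_1(\frc_k) = \emptyset$ whenever $k = 1$ or $k \ge 3$ (no such connected compact $1$-manifold), $\Bcal_1(\frc_2) \simeq \Ctwo$ (the two orientations of the interval, with $\Diff^+([0,1])$ contractible), and $\Bcal_1(\frc_0) \simeq B\Diff^+(S^1) \simeq B\SO(2)$.

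For the left-hand side I would compute $\Free$ via the recipe hinted at in \cref{exc:B1-is-free}: first left Kan extend $X\colon\{\fre\} \to \An$ along $\{\fre\} \hookrightarrow \Gr^\el$, then impose the Segal condition on $\Gr^\xint$ (a right Kan extension by \cref{lem:Segal-RKE}), and finally left Kan extend along $\Gr^\xint \hookrightarrow \Gr$ to install the functoriality on active morphisms. For $X = \Ctwo$ the first two steps produce empty values at $\frc_k$ for $k = 1$ and $k \ge 3$ (no generating operations of those arities), and $\Ctwo$ at $\frc_2$ (the identity-type operations, one per dual pair of colors).

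The main obstacle is to compute $\Free(\Ctwo)(\frc_0)$, the ``trace of the identity'', which is the only place a non-discrete automorphism enters. The cleanest approach is to pass through the larger pattern $\Gr^\frs$ from \cref{war:nodeless-loop}, whose nodeless loop $\frs$ satisfies $\Aut_{\Gr^\frs}(\frs) = \mrm{O}(2)$; since modular $\infty$-operads on $\Gr^\frs$ and on $\Gr$ agree, the value of $\Free(\Ctwo)$ at $\frc_0 \in \Gr$ matches that at $\frs \in \Gr^\frs$. A $\Ctwo$-equivariant labelling of $\frs$ is a single choice of element of $\Ctwo$, and the $\mrm{O}(2)$-action on such labellings factors through the determinant $\det\colon\mrm{O}(2) \to \Ctwo$ with kernel $\SO(2)$; hence the homotopy quotient is $(\Ctwo)_{h\mrm{O}(2)} \simeq E\mrm{O}(2)/\SO(2) \simeq B\SO(2)$, matching the right-hand side. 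Combining all cases shows $\eta$ is fully faithful, hence a Dwyer--Kan equivalence, which completes the proof.
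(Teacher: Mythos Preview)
Your overall strategy---compute both sides at each corolla and match them---is sound, and your identification of $\Bcal_1(\frc_k)$ is correct.  The problem is your computation of $\Free(\Ctwo)$.  The recipe you describe is \emph{not} the one in \cref{exc:B1-is-free}: the exercise passes through the chain of \emph{full} subcategories $\{\fre\} \hookrightarrow \mbf{linGr} \hookrightarrow \Gr^{\rm biv} \hookrightarrow \Gr$ (linear graphs, then bivalent graphs, then all graphs), not through $\Gr^\el$ and $\Gr^\xint$.  This matters.  Your last step left Kan extends along the wide, non-full inclusion $\Gr^\xint \hookrightarrow \Gr$, and there is no reason this preserves Segal objects or computes the free functor.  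Already your first step does not give what you claim: there are no inert maps $\fre \to \frc_2$ (the target has a vertex, the source has none), so left Kan extension from $\{\fre\}$ to $\Gr^\el \subset \Gr^\xint$ yields $\emptyset$ at $\frc_2$, not $\Ctwo$.  The identity operation at $\frc_2$ arises in the exercise's recipe precisely because $\frc_2 = \mfr{l}_1$ lies in $\mbf{linGr}$ and the \emph{active} unit map $\fre \actto \mfr{l}_1$ is available there; it is absent from $\Gr^\el$.

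Your argument at $\frc_0$ also has a gap: you assert that the value of $\Free(\Ctwo)$ at $\frc_0$ coincides with its value at the nodeless loop $\frs$, but these are distinct objects of $\Gr^{\frs}$ and you give no reason why the two values should agree.  In the exercise's framework the relevant step is $k_!$ along $\Gr^{\rm biv} \hookrightarrow \Gr$: its value at $\frc_0$ is a colimit over bivalent graphs admitting a map to $\frc_0$, which are exactly the cycle graphs, and one then checks directly that this colimit produces $B\SO(2)$.
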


The following statement is the $1$-dimensional cobordism hypothesis, see \cite{baez1997higher,lurie2008classification,Harpaz-cobordism,GradyPavlov}.
\begin{cor}
    For every symmetric monoidal \category{} $\Ccal$, evaluating a $1$D TFT at the positive point $\pt_+$ yields an equivalence
    \[
        \ev_{\pt_+}\colon \Fun^\otimes(\Bord_1, \Ccal) \xtoo{\simeq} (\Ccal^\dbl)^\simeq.
    \]
\end{cor}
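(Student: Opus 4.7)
The plan is to chain together the equivalences built up in the excerpt, reducing the statement to a formal computation with free $\Ctwo$-sets. First I would invoke $\Fun^\otimes(\Bord_1, \Ccal) \simeq \Map_{\ModOp}(\Bcal_1, \Udual(\Ccal))$, which follows from $\Env(\Bcal_1) = \Bord_1$ and the envelope adjunction $\Env \dashv \Udual$ supplied by \cref{thm:rigid properads}. Then by \cref{lem:B1-is-free} I would replace $\Bcal_1$ by $\Free(\Ctwo)$ and apply the adjunction $\Free \dashv \col$ to obtain
\[
    \Map_{\ModOp}(\Free(\Ctwo), \Udual(\Ccal)) \simeq \Map_{\An^{\BCtwo}}(\Ctwo, \col(\Udual(\Ccal))).
\]

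The remaining computation is formal. The $\Ctwo$-space of colours is $\col(\Udual(\Ccal)) = \Udual(\Ccal)(\fre) = (\Ccal^\dbl)^\simeq$, equipped with the $\Ctwo$-action that dualises (induced by the non-trivial automorphism of $\fre \in \Gr$, as described in \cref{ex:Udual}). Since $\Ctwo$ is the free $\Ctwo$-set on one point, evaluation at the generator $\pt_+ \in \Ctwo$ yields an equivalence $\Map_{\An^{\BCtwo}}(\Ctwo, X) \xtoo{\simeq} X$ for any $X \in \An^{\BCtwo}$. Taking $X = (\Ccal^\dbl)^\simeq$ and composing the displayed equivalences produces $\Fun^\otimes(\Bord_1, \Ccal) \simeq (\Ccal^\dbl)^\simeq$.

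To finish I would verify that the composite really is evaluation at the positive point: a TFT $\Zcal \colon \Bord_1 \to \Ccal$ corresponds to a modular operad map $\Bcal_1 \to \Udual(\Ccal)$ whose effect on colours sends $\pt_+ \in \Bcal_1(\fre) = \{\pt_+,\pt_-\}$ to $\Zcal(\pt_+)$, and under $\Bcal_1(\fre) \simeq \Ctwo$ the positive point matches the chosen generator, so the composite really is $\ev_{\pt_+}$. The main obstacle is the black-box \cref{lem:B1-is-free}; unpacking it will require computing $\Free(\Ctwo)$ as a Kan extension along $\Gr^\el \hookrightarrow \Gr$ and matching the resulting value on each graph $\Gamma$ with the moduli of connected $1$-manifolds fibered over $\Gamma$ (disjoint unions of intervals glued according to $\Gamma$, with contractible diffeomorphism groups rel boundary by Cerf-type arguments). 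Given \cref{lem:B1-is-free}, however, the $1$-dimensional cobordism hypothesis reduces to the three-step adjunction chase sketched above.
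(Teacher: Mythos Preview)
Your proposal is correct and follows essentially the same route as the paper: concatenate the adjunctions $\Env \dashv \Udual$ and $\Free \dashv \col$, invoke \cref{lem:B1-is-free} to identify $\Bcal_1 \simeq \Free(\Ctwo)$, and then use that $\Ctwo$ corepresents the underlying space in $\An^{\BCtwo}$. Your added verification that the resulting equivalence is indeed $\ev_{\pt_+}$ is a nice touch the paper leaves implicit.
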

\begin{proof}
    By concatenating adjunctions we get
    \[
        \Env(\Free(-)) \colon \An^{\BCtwo} \adj \ModOp \adj \SM  \cocolon \col(\Udual(-)) = (-^{\dual})^\simeq
    \]
    so that the claim follows from $\Bord_1 = \Env(\Bcal_1) = \Env(\Free(C_2))$ and
    \[
        \Fun^\otimes(\Bord_1, \Ccal) \simeq \Map_{\An^{\BCtwo}}(\Ctwo, (\Ccal^\dbl)^\simeq) \simeq (\Ccal^\dbl)^\simeq. \qedhere
    \]
\end{proof}

\begin{exc}
    Describe the universal property of the unoriented $1$-dimensional bordism category $\Bord_1^{\rm unor}$.
    What about more general tangential structures?
\end{exc}

\begin{exc}\label{exc:B1-is-free}
    Prove \cref{lem:B1-is-free}. 
    To do this, consider the following full inclusions of graph categories.
    \[
        \BCtwo = \{\fre\} \xtoo{i} \mbf{linGr} \xtoo{j} \Gr^{\rm biv} \xtoo{k} \Gr
    \]
    Here $\mbf{linGr}$ is the full subcategory on the subdivisions of $\fre$ (i.e.~all graphs such that $\Gamma^+\setminus \{\infty\}$ is homeomorphic to $\Rbb$), and $\Gr^{\rm biv}$ is the full subcategory of bivalent graphs.
    These functors induce restrictions
    \[
        \Fun(\BCtwo,\An) \xleftarrow{i^*} \Fun(\mbf{linGr}, \An) \xleftarrow{j^*} \Fun(\Gr^{\rm biv}, \An) \xleftarrow{k^*} \Fun(\Gr, \An)
    \]
    all of which preserve the Segal condition.
    Show that the left Kan extension $i_!$, the right Kan extension $j_*$ and the left Kan extension $k_!$ all preserve Segal objects and that thus we have adjunctions
    \[\begin{tikzcd}
        \An^{\BCtwo} \rar["i_!", shift left = 1] &
        \Seg(\mbf{linGr}) \lar["i^*", shift left =1 ]\rar["j_*"', shift right = 1] &
        \Seg(\Gr^{\rm biv}) \rar["k_!", shift left = 1] \lar["j^*"', shift right = 1] &
        \Seg(\Gr). \lar["k^*", shift left = 1]
    \end{tikzcd}\]
    Moreover show that $j^*$ is conservative and that thus $j^* \dashv j_*$ is an adjoint equivalence.
    Conclude that we have an adjunction
    \[
        k_!j_*i_! \colon \An^{\BCtwo} \adj \Seg(\Gr) \cocolon i^*j^*k^*.
    \]
\end{exc}

\begin{exc}
    Describe a symmetric monoidal \category{} $\Dcal$ such that for every symmetric monoidal category $\Ccal$ there is an equivalence
    \[
        \Fun^\otimes(\Dcal, \Ccal) \simeq \Ccal \times_{\Ccal \times \Ccal} \Ar(\Ccal).
    \]
    What is $\Map_\Dcal(\unit, \unit)$? Show that if $f\colon x \to x$ is an endomorphism of a dualisable object in $x$, then $\mrm{Tr}(f^{\circ n}) = \ev_x \circ (f^{\circ n} \otimes \id_{x^\vee}) \circ \mrm{coev}_x$ has a $C_n$-action.
\end{exc}

\section{Cyclic operads}
The goal of this lecture is to understand $2$d TFTs ``restricted to genus $0$''.
Genus $0$ surfaces can be glued to one another, but they cannot be self-glued as that would increase genus.
Thus, they do not form a modular operad, but just a cyclic operad.
It will turn out that cyclic operads and algebras over them are easier to understand as they are closely related to ordinary ($\infty$-)operads, and we can quantify the difference between these two notions.
Using this theory we will also be able to classify ``handlebody TFTs'' as those turn out to be freely generated from genus $0$.

\subsection{Cyclic operads}
\subsubsection{Operads vs cyclic operads.}
Recall that we defined cyclic operads as functors $\Ocal \colon \Tree \to \An$ satisfying the Segal condition.
The \category{} of cyclic operads is then the full subcategory
\[
    \hldef{\CycOp} \subset \Fun(\Tree, \An)
\]
on these functors.
Every cyclic operad has an underlying operad, which we can obtain by precomposing with the forgetful functor
\[
    \Omega^\op = \dTree^{\rm 1-out} \xtoo{\psi} \Tree \xtoo{\Ocal} \An.
\]
We'll denote this restriction by $\underline{\Ocal} \coloneq \psi^*\Ocal$.
What was the space of $(k+1)$-ary operations in the cyclic operad $\Ocal(\frc_{k+1})$ is now the space of $k$-ary operations in the underlying operad $\ul{\Ocal}(\frc_{k,1}) = \Ocal(\frc_{k+1})$.
If we use the shorthand notation for arity $k$ operations this leads to the following confusing degree shift
\[
    \ul{\Ocal}(k) = \ul{\Ocal}(\frc_{k,1}) = \Ocal(\frc_{k+1}) = \Ocal(k+1).
\]
\begin{rem}
    Getzler and Kapranov \cite[(1.4)]{GetzlerKapranov1998} deal with this degree shift by introducing the notation $\Ocal((k+1)) \coloneq \Ocal(\frc_{k+1})$ and $\Ocal(k) \coloneq \ul{\Ocal}(k) = \ul{\Ocal}(\frc_{k,1})$.
    As we generally work with many-coloured (cyclic) operads we will rarely use the notation $\Ocal(k)$ anyway, so it will be more intuitive to distinguish the two sides by writing $\frc_{k,1}$ vs.~$\frc_{k+1}$.
\end{rem}

Classically, a cyclic operad can be defined as an operad $\Pcal$ together with an extension of the $\Sigma_{k}$-action on $\Pcal(\frc_{k,1})$ to a $\Sigma_{k+1}$-action, subject to a few axioms.%
\footnote{
    To be precise, this gives an ``un-augmented'' version of cyclic operads where the minimum arity is $\Ocal(1) = \Pcal(0)$ (if $\Pcal = \ul{\Ocal}$).
    The definition of cyclic operad we have here also has a space $\Ocal(0) = \Ocal(\frc_0)$, which is entirely forgotten when passing to the underlying operad.
    One can obtain operations of cyclic arity $0$ by composing two operations of cyclic arity $1$.
    This isn't possible in the underlying operad as the two corresponding operations of arity $0$ can't be composed.
}
In this perspective, a cyclic operad is an operad where one can (cyclicly?) permute the output and inputs.
Coherently, this approach won't work directly as there are a lot of compatibilities between these group-actions and the higher coherence of an $(\infty)$-operad, which it would be impracticable to spell out.
Instead, we'll later describe cyclic operads as operads equipped with a ``dualizing module''.

We'll think about operations in cyclic operas as not having outputs, but only inputs. One can then compose two such operations whenever they have dual inputs.
More precisely, let 
\[
    \Ocal(x_1,\dots,x_k) \coloneq \mrm{fib}_{(x_1, \dots, x_k)}\left( \Ocal(\frc_k) \too \Ocal(\fre)^{\times k} \right)
\]
denote the space of $k$-ary operations with fixed input colours $x_1$, \dots, $x_k$.
Then a cyclic operad has an involution $(-)^\vee\colon \Ocal(\fre) \to \Ocal(\fre)$ on its space of colours and composition operations
\[
    \Ocal(x_1,\dots, x_{k-1}, y) \times \Ocal(z_1, \dots, z_{l-1}, y^\vee) \too \Ocal(x_1, \dots, x_{k-1}, z_1, \dots, z_{l-1}).
\]

\subsubsection{Examples of cyclic operads.}
To get a better idea of cyclic operads, let's consider some examples. See table \ref{tab:cyclic operads} on page \pageref{tab:cyclic operads}.
\afterpage{
\begin{landscape}
\begin{table}[]
    \centering
    \begin{tabular}{c|cccccc}
         Operad & $\ul{\Ocal}(\frc_{k,1})$ & $\Env(\ul{\Ocal})$ & $\Alg_\Ocal(\Ccal)$ & $\Alg_\Ocal(\Cat)$ & $\Ocal(\frc_{k+1})$ \\
         \\
         \hline
         \\
         $\mrm{Com} = \Ebb_\infty$ & $*$ & $\Fin$ & commutative algebras & sym. mon. cat. &  $*$ \\
         \\
         $\mrm{Ass} = \Ebb_1$ & $\Sigma_k = \{\text{total orders on }\ul{k}\}$ & $\Assoc$ & associative algebras & monoidal cat. & \{\text{cyclic orders on }\ul{k+1}\}  \\
         \\
         $\Ebb_n$ & $\sqcup_k D^n \hookrightarrow D^n$ rectilinear & $\Disk_n^{\rm fr}$ & $\Alg_{\Ebb_1}(\dots\Alg_{\Ebb_1}(\Ccal)\dots)$ & braided mon. & generally not cyclic\\
         \\
         $\Ebb_n^{\rm SO} = \Ebb_n / \SO(n)$ & $\sqcup_k D^n \hookrightarrow D^n$ oriented & $\Disk_n^{\rm or}$ & $\Alg_{\Ebb_n}(\Ccal)^{h\SO(n)}$ & ribbon br. mon. & $\checkmark$ \\
         \\
         $\Bcal_d^{\rm sph}\subset \Bcal_d$ & $B\Diff_\partial(S^d \setminus \sqcup_{k+1} \Rbb^d)$ & $\subset \Bord_d$ & ? & ? & $B\Diff_\partial(S^d \setminus \sqcup_{k+1} \Rbb^d)$ \\
         \\
         $\Mcal \in \ModOp$ & $\Mcal(\frc_{k+1})$ & $\subset \Env(\Mcal)$ & ? & ? & $\Mcal(\frc_{k+1})$ 
    \end{tabular}
    \caption{Examples of (cyclic) operads}
    \label{tab:cyclic operads}
\end{table}
\end{landscape}
}
Here for any modular operad $\Mcal$ we can think of it as a cyclic operad by restricting it along the full inclusion $i\colon \Tree \hookrightarrow \Gr$.
Restriction along $i$ induces a functor
\[
    i^*\colon \ModOp = \Seg(\Gr) \too \Seg(\Tree) = \CycOp ,
\]
which we can think of as forgetting the self-gluing operations of a modular operads.
If we for example take $\Bcal_d$, then the cyclic operad obtained this way has an interesting cyclic suboperad $\Bcal_d^{\rm sph} \subset \Bcal_d$ that is defined by restricting to those colours that are $(d-1)$-spheres and those operations that are of the form $S^d \setminus \sqcup_k \Rbb^d$.
Note that this would not be a well-defined modular suboperad of $\Bcal_d$, as it is not closed under self-gluing. 

Some of the cyclic operads listed in the table actually coincide.
Recall the handlebody modular operad $\Hcal \subset \Bcal_3^\partial$ from \cref{ex:handlebody-modular-operad}.
Using Smale's theorem $\Diff(S^2) \simeq \SO(3)$ \cite{Smale59} and the Smale conjecture $\Diff(S^3) \simeq \SO(4)$ (as proved by Hatcher \cite{Hatcher1981}) one can show that there are equivalences of cyclic operads
\[
    \Ebb_2^{\rm SO} \simeq \Bcal_2^{\rm sph} \simeq \Hcal^{\rm sph}
\]
and the Smale conjecture also implies 
\[
    \Ebb_3^{\rm SO} \simeq \Bcal_3^{\rm sph}.
\]
Proving these equivalences mostly amounts to constructing an equivalence of operads and then using \cref{thm:cyclic}, which we'll see later in this section.

\subsubsection{From operads to cyclic operads.}
We will later be able to describe handlebody-TFTs as cyclic algebras over a cyclic operad and even for the usual $2$-TFTs the starting point of the classification will be cyclic algebras.
To make this kind of description actually useful we'll have to describe cyclic algebras in more concrete terms.
Our goal will thus be to write
\[
    \text{cyclic algebras over } \Ocal = \text{ algebras over } \ul{\Ocal} + \text{ extra data}
\]
Since we have a fairly good handle of algebras over $\ul{\Ocal}$ (for the $\Ocal$s involved) this will work as a ``concrete description'' as long as the extra data is manageable.
In fact, as cyclic algebras are just maps of cyclic operads, it will be more convenient to prove a version of this one category level up, i.e.~to prove a statement of the form
\[
    \text{cyclic operads } = \text{ operads } + \text{ extra data}.
\]
Here the relevant additional structure are certain right modules.

\subsection{Right modules over operads}
\begin{defn}
    A right%
    \footnote{
        The name comes from the point of view where one defines (one-coloured) operads as algebras for the composition product in symmetric sequences. (A version of this also works for multiple colours.)
        In this setting operads are associative algebras, so we can talk about left and right module over them. 
        Left modules turn out to be algebras (over the operad) in spaces and right modules turn out to be, well, right modules in the above sense.
    }
    module over an operad $\Ocal$ is a functor
    \[
        X\colon \Env(\Ocal)^\op \too \An,
    \]
    and we write $\hldef{\RMod_\Ocal} \coloneq \PSh(\Env(\Ocal))$ for the \category{} of right modules. 
    Note that here we are using the symmetric monoidal envelope construction for operads, as defined in \cite[\S2.2.4]{HA}, which is a special case of the envelope construction for properads, as discussed in \cref{ex:envelope-operads}.
\end{defn}

Since an object in the envelope is a formal tensor product of colours of the operad, we can think of the right module as contravariantly assigning to each such tensor product a space
\[
    x_1 \boxtimes \dots \boxtimes x_k \longmapsto X(x_1 \boxtimes \dots \boxtimes x_k)
\]
with the functoriality being such that if $\alpha \in \Ocal(y_1,\dots, y_l; x_1)$ is an operation with output colour $x_1$ (or equivalently $x_i$) there is a morphism
\[
    X(\alpha) \colon X(x_1 \boxtimes \dots \boxtimes x_k) \too X(y_1 \boxtimes \dots \boxtimes y_l \boxtimes x_2 \boxtimes \dots \boxtimes x_k).
\]

\subsubsection{Disk presheaves.}
A particular interesting example of right modules is the case of $\Ebb_n^{\rm SO}$ where the monoidal envelope is $\Disk_n$.
Recall that this category has objects $\sqcup_k \Rbb^n$ for $k \ge 0$ and the mapping spaces are the space of smooth embeddings.
Thus, a right module is a functor
\[
    X \colon \Disk_n^\op \too \An.
\]
Such functors are also called disk presheaves and the category of right modules is
\[
    \RMod_{\Ebb_n^{\rm SO}} = \PSh(\Disk_n).
\]

\begin{example}\label{ex:E_M}
    If $M$ is any $n$-manifold we can define an $\Ebb_n^{\rm SO}$-right module $E_M$ by
    \[
        \hldef{E_M}(\sqcup_k \Rbb^n) \coloneq \Emb(\sqcup_k \Rbb^n, M).
    \]
    This can be thought of as recording all the framed configuration spaces of $M$ as well as certain collapse maps between them.
    This is the basis of \emph{Goodwillie--Weiss embedding calculus}, where one for example defines
    \[
        T_\infty\Emb(M, N) \coloneq \Map_{\PSh(\Disk_n)}(E_M, E_N).
    \]
    While it is not necessary to think of these disk presheaves as right modules over an operad in order to do embedding calculus, this has turned out to be a very fruitful perspective, see \cite{KraKup-operadic-embedding-caluclus}.
\end{example}

\begin{example}\label{ex:factorisation-homology}
    Another concept that fits into this framework is \emph{factorization homology}, which we will encounter again later in \cref{ex:cylic-EnSO-algs}.
    Suppose for simplicity that $\Ccal$ is a symmetric monoidal category that has sufficient colimits and that for every $x \in \Ccal$ the functor $x \otimes -\colon \Ccal \to \Ccal$ preserves colimits.
    (E.g.~$\Ccal \in \CAlg(\PrL)$.)
    An $\Ebb_n^{\rm SO}$-algebra $A$ in $\Ccal$ can be written as a symmetric monoidal functor $\Disk_n \to \Ccal$ whose value at $\sqcup_k \Rbb^n$ is $A^{\otimes n}$.
    By composing with the functor $\Map_\Ccal(-,x)$ for some object $x \in \Ccal$ we obtain another disk presheaf $\Map_\Ccal(A^{\otimes -}, x)$.
    The mapping space from $E_M$ into this disk presheaf can be expressed in terms of factorization homology as
    \[
        \Map_{\PSh(\Disk)}(E_M, \Map_\Ccal(A^{\otimes -}, x)) \simeq \Map_\Ccal\big(\smallint_M A, x\big).
    \]
    Via the Yoneda lemma this in fact uniquely characterizes $\smallint_M A \in \Ccal$ as we vary $x$.
\end{example}

\subsubsection{A graph pattern for right modules.}
In order to put right modules on the same footing as the other algebraic structures we're considering it will be useful to write them as Segal spaces for a certain graph pattern.
This will in particular avoid the use of the monoidal envelope $\Env\colon \Op \to \SM$ whose construction we haven't really discussed.
What we will describe is not the category $\RMod_\Ocal$ of right modules over a specific operad $\Ocal$, but rather the more general category where objects are a pair of an operad and a right module over it.
Formally, this obtained as the cartesian unstraightening%
\footnote{
    Here and elsewhere we are liberally ignoring size issues.
}
\[
    \hldef{\RMod} \coloneq \Un^{\rm cart}\left(\Op^\op \xtoo{\Env} (\SM)^\op \too \Cat^\op \xtoo{\PSh(-)} \Cat\right).
\]

\begin{figure}[ht]
    \centering
    \def\svgwidth{.99\linewidth}
    \import{figures/}{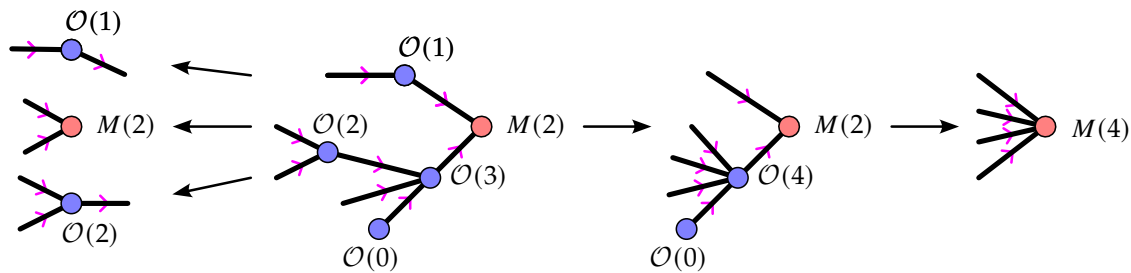}
    \caption{Several objects and morphisms in $\dTree^{\le1\rm-out}$.
    \cref{prop:Seg-rMod} allows us to think of a $\dTree^{\le1\rm-out}$-Segal space as a pair of an operad $\Ocal$ and a right module $M$.}
    \label{fig:module-action}
\end{figure}

\begin{prop}\label{prop:Seg-rMod}
    There is an equivalence 
    \[
        \Seg(\dTree^{\rm \le1-out}) \simeq \RMod
    \]
    between the category of Segal spaces over $\dTree^{\rm \le1-out}$, the category of directed trees where each vertex has at most one outgoing edge, and the category of operads and right modules.
\end{prop}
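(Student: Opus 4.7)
The plan is to realize the restriction functor $\iota^*\colon \Seg(\dTree^{\rm \le1-out}) \to \Seg(\dTree^{\rm 1-out}) = \Op$ along the full inclusion $\iota$ as a cartesian fibration whose straightening is $\Ocal \mapsto \PSh(\Env(\Ocal))$; by the definition $\RMod = \Un^{\rm cart}(\PSh \circ \Env)$ this gives the claimed equivalence. The combinatorial starting point is that every connected tree in $\dTree^{\rm \le1-out}$ is either \emph{purely rooted} (lies in $\dTree^{\rm 1-out}$, with a dangling root half-edge) or \emph{capped} (has a unique vertex of out-degree zero), since the outgoing-edge path from any vertex in a finite at-most-one-out tree terminates in a unique maximal point. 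The elementary objects of this pattern are therefore $\fre$, the rooted corollas $\frc_{k,1}$, and the cap corollas $\frc_{k,0}$, and a Segal space is determined by its values on these together with the inert functoriality.

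Given $X \in \Seg(\dTree^{\rm \le1-out})$, I would set $\Ocal \coloneq \iota^* X$, which is an operad since $\dTree^{\rm 1-out} \simeq \Omega^\op$, and construct a right module $M\colon \Env(\Ocal)^\op \to \An$ by
\[
    M(x_1 \boxtimes \dots \boxtimes x_k) \coloneq \mrm{fib}_{(x_1,\dots,x_k)}\bigl(X(\frc_{k,0}) \to X(\fre)^k\bigr).
\]
For the contravariant functoriality, a morphism $x_\bullet \to y_\bullet$ in $\Env(\Ocal)$ is (up to coherent data) a forest $F = \sqcup_j T_j$ of $\Ocal$-operations with leaves $x_\bullet$ and roots $y_\bullet$. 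Grafting $F$ onto the inputs of the cap corolla $\frc_{l,0}$ produces a capped tree $T$ equipped with an inert map $T \intto \frc_{l,0}$ and an active collapse $T \actto \frc_{k,0}$. The Segal condition expresses $X(T)$ as a pullback combining $\Ocal$-values on the $T_j$ with $M$-values on the cap, and the active collapse provides a map $X(T) \to X(\frc_{k,0})$; restricting to the fiber over the specified colours yields a composite $M(y_\bullet) \times \prod_j \Ocal(T_j) \to M(x_\bullet)$, which is the desired action map. Going back, given $(\Ocal, M) \in \RMod$, one extends $\Ocal$ to capped trees by the same Segal pullback recipe, using the $\Env(\Ocal)$-action on $M$ to define functoriality along active maps between capped trees.

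The main obstacle is the coherent verification that the mapping-space structure between capped trees in $\dTree^{\rm \le1-out}$ matches that of $\Env(\Ocal)^\op$, so that the construction assembles into an equivalence of \categories{} rather than a mere bijection on objects. The most efficient route is to apply the algebraic-patterns framework of \cite{patterns1}: verify that $\iota$ is a Segal-compatible morphism of patterns whose induced functor on Segal objects is a cartesian fibration, and identify the fiber over $\Ocal$ with $\PSh(\Env(\Ocal))$ by exhibiting the full subcategory on capped trees lying over $\Ocal$ as the opposite of $\Env(\Ocal)$. The core content of this fiber identification is a restatement of the universal property of the monoidal envelope from \cref{thm:envelope} specialised to operads: the \category{} of formal tensor products of colours of $\Ocal$ with morphisms given by forests of operations is precisely $\Env(\Ocal)$. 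Alternatively, one can proceed by hand using the active--inert factorization on $\dTree^{\rm \le1-out}$ and checking the equivalence on generators, reducing everything to the case of corollas and their graftings.
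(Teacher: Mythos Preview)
Your approach is essentially the same as the paper's, which also presents only a proof idea rather than a full argument: both identify the elementary objects as $\fre$, $\frc_{k,1}$ (operad vertices), and $\frc_{k,0}$ (module vertices), both describe the action via grafting/edge-contraction, and both defer the coherent comparison to the construction of $\Env$. Your framing via realizing $\iota^*$ as a cartesian fibration whose straightening is $\Ocal \mapsto \PSh(\Env(\Ocal))$ is a sharper organizing principle than the paper's sketch---it matches the paper's definition of $\RMod$ as a cartesian unstraightening on the nose---and your combinatorial observation that every tree in $\dTree^{\rm \le1-out}$ has at most one out-degree-zero vertex is correct and slightly more explicit than the paper's phrasing in terms of ``two types of vertices''.
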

\begin{proof}[Proof idea]
    A tree $T \in \dTree^{\rm\le1-out}$ has two types of vertices: those with an outgoing edge, for which the local corolla is $\frc_{k,1}$, and those with no outgoing edges for which the local corolla is $\frc_{k}$.
    The idea is that the former should be labelled by $k$-ary operations of an operad whereas the latter should be labelled by the values of a right module over said operad (when evaluated on $k$-fold tensor products of colours).
    See \cref{fig:module-action}.
    When contracting edges in this type of tree we can either identify two vertices with outgoing edge, in which case we use the operadic multiplication, or two vertices where one of them has an outgoing edges, in which case we use the action of the operad on the right module.
    In order to make this precise one has to look into the details of the construction of the symmetric monoidal envelope $\Env$ and relate it to trees.
\end{proof}

The morphism of graph patterns
\[
    \psi\colon \dTree^{\rm\le1-out} \too \Tree
\]
that forgets the direction of the edges induces a functor
\[
    \psi^*\colon \CycOp = \Seg(\Tree) \too \Seg(\dTree^{\rm\le1-out}) = \RMod.
\]
If we inspect this more closely, we see that it sends a cyclic operad $\Ocal$ to the tuple $(\ul{\Ocal}, \omega_\Ocal)$ where $\ul{\Ocal}$ is the underlying operad as before and $\omega_\Ocal$ is a right module over $\ul{\Ocal}$ given by the cyclic operad itself, in the sense that
\[
    \omega_\Ocal(x_1 \boxtimes \dots \boxtimes x_k) \coloneq \Ocal(x_1,\dots,x_k)
\]
We can think about this as saying that
\[
\text{``every cyclic operad is a right module over itself.''}
\]
Note that, confusingly, this is not the tautological right module structure of a (one-coloured) operad over itself.
Instead, it is an interesting module that involves a shift of the arities by one.
Writing this in a simplified notation that ignores colours we have
\[
    \ul{\Ocal}(k) \simeq \omega_\Ocal(k+1) .
\]

\subsection{Cyclic operads via right modules}
In this section we'll see a theorem that says that the data $(\ul{\Ocal}, \omega_\Ocal)$ of the underlying operad and its canonical right module suffices to reconstruct the cyclic operad $\Ocal$.
\subsubsection{Defining dualizing modules.}
The basic idea is that because we have $\ul{\Ocal}(k) \simeq \omega_\Ocal(k+1)$ and the symmetric group $\Sigma_{k+1}$ acts on the right we can transport the action along the equivalence to recover the symmetry of $\Ocal(k+1) = \ul{\Ocal}(k)$.
The higher coherences should follow by a more sophisticated version of this argument.
However, in order for this to be a useful description the equivalence between the operad and the right module should not be additional coherence data, but rather should emerge from the existing structure.
We can achieve this by imitating the notion of a duality pairing.

\begin{defn}
    Let $\omega\colon \Env(\Ocal)^\op \to \An$ be a right module over an operad $\Ocal$.
    We say that two colours $c, c^\vee \in \col(\Ocal)$ are \hldef{dual with respect to $\omega$} if there is an element $e \in \omega(c^\vee \boxtimes c)$ that is a \hldef{non-degenerate pairing} in the sense that  the composite maps (defined by acting on $e$)
    \begin{align*}
        \Ocal(x_1,\dots, x_k; c) \xtoo{\id \times e} 
        \Ocal(x_1,\dots, x_k; c) \times \omega(c^\vee \boxtimes c) \too \omega(c^\vee \boxtimes x_1 \boxtimes \dots \boxtimes x_k) \\
        \Ocal(x_1,\dots, x_k; c^\vee) \xtoo{\id \times e} 
        \Ocal(x_1,\dots, x_k; c^\vee) \times \omega(c^\vee \boxtimes c) \too \omega(x_1 \boxtimes \dots \boxtimes x_k \boxtimes c)
    \end{align*}
    both are equivalences for any choice of colours $x_1, \dots, x_k \in \col(\Ocal)$.
    We say that $\omega$ is a \hldef{dualizing module} for $\Ocal$ if every colour of $\Ocal$ has a dual (with respect to $\omega$).
\end{defn}

The motivating example for this definition is the following setting, where the above essentially recovers the notion of a duality pairing in a (symmetric) monoidal category.
\begin{example}\label{ex:U(C)-dualizing-module}
    If $\Ocal = \ul{\Ucal}(\Ccal) \in \Op$ is the underlying operad of a symmetric monoidal category $\Ccal$ then we can define a right module over it by 
    \[
        \omega_\Ccal(x_1 \boxtimes \dots \boxtimes x_k) \coloneq \Map_\Ccal(x_1 \otimes \dots \otimes x_k, \unit_\Ccal).
    \]
    Suppose $e\colon x \otimes y \to \unit_\Ccal$ is a duality pairing in $\Ccal$, in the sense that there exists a compatible copairing $c\colon \unit_\Ccal \to y \otimes x$ such that
    \[
        (e \otimes \id_x) \circ (\id_x \otimes c) = \id_x
        \qqand
        (\id_y \otimes e) \circ (c \otimes \id_y) = \id_y.
    \]
    Then we can think of $e$ as a point in $\omega_\Ccal(x \otimes y)$ and as such it will be a non-degenerate pairing because it induces equivalences
    \[
        \Map_\Ccal(z, y) \simeq \Map_\Ccal(z \otimes x, \unit)
        \qqand
        \Map_\Ccal(z, x) \simeq \Map_\Ccal(z \otimes y, \unit).
    \]
    Therefore, if $\Ccal$ is a symmetric monoidal category where every object is dualisable (i.e.~rigid) then $\omega_\Ccal$ will be a dualizing module for $\ul{\Ucal}(\Ccal)$.
    (Note, however, that the converse of this is not true:
    there interesting examples of non-rigid symmetric monoidal categories whose underlying operad admits a dualizing module, these turn out to be%
    \footnote{
        ``Turn out to be'' here means that with some work one can establish an equivalence between (symmetric monodial \categories{} $\Ccal$ plus a dualizing module for $\Ucal(\Ccal)$) and an $\infty$-categorical generalization of the definition in \cite{BoyarchenkoDrinfeld-GV}.
    }
    exactly the Grothendieck--Verdier categories of \cite{BoyarchenkoDrinfeld-GV}, which also appear in \cite{MuellerWoike-GV}.)
\end{example}

\subsubsection{The theorem.}
While the above emulates the notion of a duality pairing this analogy is not quite perfect:
as the relation between a duality pairing and its copairing can be expressed in terms of equations, duality pairings are automatically preserved by symmetric monoidal functors.
The same is not true for non-degenerate pairings in right modules: maps of right modules need not preserve them.
Thus, we need to impose this condition by hand when defining the category of dualizing modules.

\begin{defn}
    The \category{} of operads with dualizing modules is defined as the subcategory $\hldef{\RMod^{\rm dual}} \subset \RMod$ whose objects are those tuples $(\Ocal, \omega)$ where $\omega$ is a dualizing module for $\Ocal$ and whose morphisms are $(f,\alpha) \colon (\Ocal, \omega) \to (\Pcal, \omega')$ where $f\colon \Ocal \to \Pcal$ is any map of operads and $\alpha\colon \omega \to f^*\omega'$ is a map of $\Ocal$ right modules that preserves non-degenerate pairings:
    if $e \in \omega(c^\vee \boxtimes c)$ is non-degenerate, we require that $\alpha(e) \in \omega'(f(c^\vee) \boxtimes c)$ is also non-degenerate.
\end{defn}

The key theorem describing cyclic operads is the following. A version of this for the one-coloured (rational) case was proven by Willwacher \cite{Wil24}.%
\footnote{
    Note that there are some aesthetical differences between Willwacher's theorem and the theorem as it is stated here, which mostly come from the fact that Willwacher works with $1$-coloured cyclic operads while we allow multiple colours.
    This is, for instance, the reason that in our category $\RMod^{\rm dual}$ preserving the duality pairings is a condition that is homotopy invariant, whereas for Willwacher the right module is pointed (by a non-degenerate symmetric pairing) and maps have to strictly preserve this base point.
}
\begin{thm}[\cite{cyclic}]\label{thm:cyclic}
    The forgetful functor $\psi\colon \dTree^{\rm\le 1-out} \too \Tree$ induces an equivalence
    \[
        \psi^\ast \colon \CycOp^{\cpl} \xtoo{\simeq} \RMod^{\dual,\cpl} \subset \RMod.
    \]
\end{thm}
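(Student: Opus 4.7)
The plan is to follow the template of \cref{thm:rigid properads}, with $\psi\colon\dTree^{\rm\le 1-out}\to\Tree$ in the role of $\varphi\colon\daGr\to\Gr$ and the non-degenerate pairings of a dualizing module in the role of the duality (co)pairings of a rigid properad. I would prove (a) that $\psi^*$ sends $\CycOp^{\cpl}$ into $\RMod^{\dual,\cpl}$, (b) essential surjectivity onto this subcategory, and (c) full faithfulness.

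For (a), \cref{prop:Seg-rMod} identifies $\psi^*\Ocal$ with the pair $(\ul\Ocal,\omega_\Ocal)$ where $\omega_\Ocal(x_1\boxtimes\dots\boxtimes x_k)=\Ocal(x_1,\dots,x_k)$. Given a colour $c\in\Ocal(\fre)$, the cyclic involution produces a candidate dual $c^\vee$, and the identity operation of the cyclic operad furnishes a canonical element $e_c\in\Ocal(c,c^\vee)=\omega_\Ocal(c^\vee\boxtimes c)$. Non-degeneracy of $e_c$ is then essentially tautological: composition with $e_c$ realizes the canonical identification
\[
\ul\Ocal(x_1,\dots,x_k;c)=\Ocal(x_1,\dots,x_k,c^\vee)=\omega_\Ocal(c^\vee\boxtimes x_1\boxtimes\dots\boxtimes x_k),
\]
where the first equality is the very definition of the underlying operad of a cyclic operad. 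Morphisms of cyclic operads preserve identities and hence these pairings, so $\psi^*$ does factor through $\RMod^{\dual,\cpl}$.

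For (b) and (c) I would construct an inverse on objects. Given $(\Ocal,\omega)\in\RMod^{\dual,\cpl}$, define $\Qcal\colon\Tree\to\An$ on a tree $T$ by choosing a subdivision $T\hookrightarrow T'$ together with an acyclic orientation $\fro$ on $T'$ that makes every vertex have at most one outgoing edge (one can always do this by rooting at an external half-edge or at a vertex). Then set
\[
\Qcal(T)\subset(\Ocal,\omega)(T',\fro)
\]
to be the subspace of Segal labellings where every new bivalent vertex introduced by the subdivision is labelled by an invertible unary operation in $\Ocal(\frc_{1,1})$ if the orientations of the two adjacent edges are coherent, by a non-degenerate pairing in $\omega$ if both point in, and by a non-degenerate copairing if both point out. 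Using non-degeneracy (to re-root across an edge) together with completeness (to pin down invertible unary operations up to identities), one shows that any two choices of directed-acyclic $\le 1$-out subdivision yield canonically equivalent spaces, and that the category of such subdivisions of $T$ is weakly contractible; the identification $\psi^*\Qcal\simeq(\Ocal,\omega)$ holds by construction. Full faithfulness of $\psi^*$ then follows because a map in $\RMod^{\dual,\cpl}$ is already precisely the data required to extend, one subdivision at a time, to a map of the reconstructed cyclic operads.

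The main obstacle will be promoting the pointwise assignment $T\mapsto\Qcal(T)$ to a bona fide functor $\Tree\to\An$ satisfying the Segal condition, and showing this construction is itself functorial in $(\Ocal,\omega)$ at the $\infty$-categorical level. This is the analogue of the most delicate part of the proof of \cref{thm:rigid properads}. I would expect to tackle it either abstractly, by exhibiting $\psi$ as a suitable morphism of algebraic patterns in the sense of \cite{patterns1} and invoking a general Kan-extension criterion, or else by carrying out the explicit combinatorial construction using the weak contractibility of the subdivision category together with the compatibility isomorphisms encoding non-degeneracy. Completeness is crucial in both approaches to ensure that ``invertible unary operations'' can be rigidified to identities.
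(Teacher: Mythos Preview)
Your proposal is correct and follows essentially the same approach as the paper's proof sketch: both proceed by analogy with \cref{thm:rigid properads}, reconstructing $\Qcal(T)$ from $(\Ocal,\omega)$ by choosing a directed $\le 1$-out subdivision of $T$ and showing the category of such choices is weakly contractible. The paper adds one concrete detail you omit, namely that this category of subdivisions can be identified with the poset of simplices of $T$ (hence is contractible), and note that your ``copairing'' case is vacuous here since a bivalent vertex in $\dTree^{\le 1\text{-out}}$ can never have two outgoing edges.
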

\begin{proof}[Proof idea]
    The proof of this is similar to theorem \cref{thm:rigid properads} by considering for each tree $T$ the category of directed subdivisions of $T$ such that each vertex has at most one outgoing edge.
    This category can be identified with the poset of simplices of $T$ and is thus weakly contractible.
    In assembling the proof one needs to be a little more careful as $\RMod^\dual$ is not a full subcategory of $\RMod$.
\end{proof}

\subsubsection{Examples of dualizing modules.}
By \cref{thm:cyclic} we can promote an operad $\Ocal$ to a cyclic operad, simply by constructing a right module over it and checking that with respect to this right module all colours have a dual.
Often our operads will happen to have a connected space of colours, in which case we only need to find a single non-degenerate duality pairing.

\begin{example}\label{ex:dualizing-modules}
\begin{enumerate}[(1)]
    \item The operad $\Ebb_\infty = \mrm{Com}$ has a dualizing module given by the terminal right module $\omega_{\Ebb_\infty}\colon \Fin^\op \to \An$ with $\omega_{\Ebb_\infty}(A) = *$.
    \item For the operad $\Ebb_1 = \mrm{Ass}$ the envelope is the symmetric monoidal $1$-category $\Assoc$, where objects are finite sets and morphisms are maps of finite sets together with total orderings on each fiber.
    We can define a dualizing right module by
    \begin{align*}
        \omega_{\Ebb_1}\colon \Assoc^\op & \too \An \\
        A & \longmapsto \{\text{cyclic orders on }A\}.
    \end{align*}
    This is functorial in maps in $\Assoc^\op$ because if we have a cyclic order on $A$ and a map $B \to A$ with totally ordered preimages, then we can define a cyclic order on $B$ by replacing each point in $A$ with its totally ordered preimage.
    The duality pairing is given by $e \in \omega_{\Ebb_1}(\ul{2})$ the unique cyclic order on two elements.
    To see that it is a duality pairing we observe that the map
    \[
        \Ebb_1(n) = \{\text{total orders on }\ul{n}\} \too \{\text{cyclic orders on }\ul{n+1}\} = \omega_{\Ebb_1}(n+1)
    \]
    defined by acting on $e$ is indeed a bijection.
    \item 
    The envelope of $\Ebb_n^{\rm SO}$ is the category $\Disk_n$ of (disjoint unions of oriented) $n$-disks.
    We can define a right module over it as the quotient of the natural $\SO(n+1)$-action on the right module $E_{S^{n}}$:
    \begin{align*}
        \omega_{\Ebb_n^{\rm SO}}\colon \Disk_n & \too \An \\
        \sqcup_k \Rbb^n &\longrightarrow \Emb(\sqcup_k \Rbb^n, S^n)\sslash \SO(n+1)        
    \end{align*}
    One checks that this is a dualizing module.
    This equips $\Ebb_n^{\rm SO}$ with the structure of a cyclic operad.
    \item 
    The dualizing module  
    \(
        \omega_\Ccal(x_1\boxtimes \dots \boxtimes x_n) \coloneq \Map_\Ccal(x_1 \otimes \dots \otimes x_n, \unit)
    \)
    from \cref{ex:U(C)-dualizing-module} improves the underlying operad $\ul{\Ucal}(\Ccal)$ of a rigid symmetric monodial \category{} to a cyclic operad.
    (More generally, this works for symmetric monoidal categories that admit a dualizing object in the sense of \cite{BoyarchenkoDrinfeld-GV}.)
\end{enumerate}
\end{example}

\begin{rem}
    It is a theorem of Budney \cite{Budney2008-cyclic} that a certain model of the little framed $n$-disk operad can be promoted to a cyclic operad, and thus that $\EnSO$ ``is'' a cyclic operad. 
    Our approach seems simpler, but much more implicit, as we for instance cannot write a model in topological spaces.
    Note that a priori we now have two cyclic structures on the same operad, but in the case at hand, it should not be too hard to compare our dualizing module with the one Budney constructs and to thus get an equivalence of cyclic operads.
\end{rem}

\begin{exc}
    Check that $\omega_{\Ebb_n^{\rm SO}}$ is indeed a dualizing module.
    Construct a dualizing module for $\Ebb_n$ whenever $S^n$ is a Lie group ($n=1,3$), recovering for $n=1$ the ``cyclic orders'' dualizing module from \cref{ex:dualizing-modules}.
    Show that if $\Ebb_n$ admits a dualizing module, then $S^n$ is a loop space and thus $n=1,3,\infty$. (This last part is hard.)
\end{exc}

\begin{question}
    Is there a geometric model for the cyclic structure on $\Ebb_3$, analogous to the ``conformal'' model Budney \cite{Budney2008-cyclic} gives for the cyclic structure on $\EnSO$?
\end{question}

\subsection{Cyclic algebras}

We can define cyclic algebras analogously to modular algebras as follows.
\begin{defn}
    Given a symmetric monoidal \category{} $\Ccal$ and a cyclic operad $\Ocal$ we can define the \category{} of \hldef{cyclic $\Ocal$-algebras} in $\Ccal$, analogously to how we defined modular algebras, as
    \[
        \hldef{\Algcyc_\Ocal(\Ccal)} \coloneq \Map_{\CycOp}(\Ocal, \Udual(\Ccal))
    \]
    where, by abuse of notation, we let $\Udual(\Ccal)$ denote the cyclic operad obtained from the modular operad of the same name.
\end{defn}

\subsubsection{As algebras over the induced modular operads.}
To give an alternative description, we note that the forgetful functor that we defined as restriction along $i\colon \Tree \hookrightarrow \Gr$ admits a left adjoint
\[
    \Indcycmod \colon \CycOp = \Seg(\Tree) \adj \Seg(\Gr) = \ModOp \cocolon i^\ast.
\]
Using the left adjoint we can rewrite
\[
    \Algcyc_\Ocal(\Ccal) = \Map_{\CycOp}(\Ocal, \Udual(\Ccal))
    \simeq \Map_{\ModOp}(\Indcycmod(\Ocal), \Udual(\Ccal))
    = \Algmod_{\Indcycmod(\Ocal)}(\Ccal) 
\]
\begin{rem}
    That this left-adjoint exists follows from the adjoint functor theorem, and we can write it as the composite of left Kan extension $i_!$ and the ``Segalification'', i.e.~the left adjoint to the full inclusion $\Seg(\Gr) \hookrightarrow \Fun(\Gr, \An)$.
    This Segalification is generally hard to compute, but it turns out that one can change some of the graph patterns involved so that instead of $i_!$ we need to compute the left Kan extension along another functor $j\colon \gGr^{=0} \hookrightarrow \gGr$, which has the property that $j_!$ preserves Segal spaces, so that no Segalification is needed.
    (See \cref{exc:B1-is-free} for an analogous story.)
    We'll see more of this in the final lecture.
\end{rem}

For now, we'll use the following fact as a black-box, but we'll see a bit more about its proof in the next lecture.
\begin{prop}\label{prop:Hbdy-free-on-E2SO}
    There are equivalences
    \[
        \Indcycmod(\Ebb_2^{\rm SO}) \simeq \Indcycmod(\Hcal^{\rm sph}) \simeq \Hcal \subset \Bcal_3^\partial
    \]
    and similarly, $\Indcycmod(\Ebb_3^{\rm SO})$ can be identified as a sub modular operad of $\Bcal_3$ with operations
    \[
        \Indcycmod(\Ebb_3^{\rm SO})(\frc_k) = \coprod_{g \ge 0} B\Diff((S^1\times S^2)^{\#g} \setminus \sqcup_k D^3).
    \]
\end{prop}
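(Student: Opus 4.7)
The plan is to establish all three equivalences by combining the universal property of the adjunction $\Indcycmod \dashv i^\ast$ with the Dwyer--Kan criterion of \cref{thm:completion}, ultimately reducing the content to two geometric contractibility statements. The first equivalence $\Indcycmod(\Ebb_2^{\rm SO}) \simeq \Indcycmod(\Hcal^{\rm sph})$ is immediate from the equivalence of cyclic operads $\Ebb_2^{\rm SO} \simeq \Hcal^{\rm sph}$ noted earlier, which in turn uses Smale's theorem and the Smale conjecture to identify the relevant diffeomorphism groups. The substantive claims are therefore $\Indcycmod(\Hcal^{\rm sph}) \simeq \Hcal$ and the analogous statement in dimension three.

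For the handlebody case, the comparison map is the counit adjoint to the inclusion $\Hcal^{\rm sph} \hookrightarrow i^\ast\Hcal$. Both sides have colour space $B\SO(2)$, so essential surjectivity is trivial and the task reduces to verifying fully faithfulness on each corolla $\frc_k$. Geometrically, a point of $\Indcycmod(\Hcal^{\rm sph})(\frc_k)$ should be represented (up to contractible indeterminacy in the choice of graph) by a connected graph $\Gamma$ with $k$ external legs together with a labelling of each $m$-valent vertex by a $3$-ball with $m$ marked boundary disks; the counit sends such data to the handlebody obtained by gluing these balls along the edges of $\Gamma$, whose genus is $b_1(\Gamma)$. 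I would realise both sides as classifying spaces of topological groupoids and compare them via an intermediate groupoid $\Hcal^{\rm cut}(\frc_k)$ of pairs $(H, D)$ of a handlebody with a maximal system of meridian compressing disks. The forgetful map to $\Hcal(\frc_k)$ has fibre over $H$ the space of maximal meridian-disk systems, which is contractible by Hatcher's classical theorem on the non-separating disk complex of a handlebody. The forgetful map to the cyclically-induced side is an equivalence essentially by construction of $\Indcycmod$ as the free modular operad on the cyclic one: cutting along a maximal meridian system exhibits $H$ as the graph-composition of $3$-balls dictated by the dual graph of the cut.

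The $\Ebb_3^{\rm SO}$ case is entirely parallel: $\Bcal_3^{\rm sph} \simeq \Ebb_3^{\rm SO}$ has operations given by $B\Diff_\partial(S^3 \setminus \sqcup_{k+1}\Rbb^3)$, and self-gluing these pieces along a graph of first Betti number $g$ yields precisely a manifold of the form $(S^1\times S^2)^{\# g}\setminus \sqcup_k D^3$. The role of the disk system is now played by a maximal collection of essential $2$-spheres cutting the manifold into copies of $S^3$-minus-balls, and contractibility of the space of such sphere systems follows from Hatcher's theorem on the sphere complex of $\#^g S^1\times S^2$ together with the smooth Schoenflies theorem in dimension three. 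The main obstacle in either argument is bridging the homotopy-coherent presentation of $\Indcycmod$---as a left Kan extension along $\Tree\hookrightarrow \Gr$ followed by Segalification, neither of which is directly computable---with the classical cut-system groupoid. I would avoid Segalification by working with the cut-system model of $\Hcal$ and $\Bcal_3$ from \cref{subsec:mfd-modular-operad}, which already presents the target modular operad as the classifying space of a groupoid of manifolds-with-cuts, and by using a genus-graded graph pattern (in the spirit of \cref{exc:B1-is-free}) in which the relevant left Kan extension automatically preserves Segal objects, so that the comparison with the disk/sphere complex model is essentially tautological.
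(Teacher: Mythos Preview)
Your approach is essentially the same as the paper's: the paper's proof idea simply cites Giansiracusa for the handlebody case (noting that he ``proves this using the contractibility of the disk complex'') and \cite[Proposition 3.12]{BBS-finiteness} for the $\Ebb_3^{\rm SO}$ case, which rests on Hatcher's sphere complex contractibility. Your sketch correctly identifies these two geometric inputs and supplies the bridging argument (via the intermediate cut-system groupoid) that the paper leaves implicit in the citations; this bridging is indeed made explicit later in the paper's genus-filtration section (see \cref{lem:Cutg-as-fiber}), confirming that your translation between $\Indcycmod$ and classical disk/sphere complexes is the intended one.
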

\begin{proof}[Proof idea]
    The first statement is due to Giansiracusa \cite{Giansiracusa2011-handlebody}, who proves this using the contractibility of the disk complex. 
    (He uses a slightly different model of the handlebody modular operad, but a similar argument applies here.)
    The second claim can be deduced from \cite[Proposition 3.12]{BBS-finiteness}.
\end{proof}

Combining this we above, we get that symmetric monoidal functors out of the handlebody \category{} are equivalent to $\Ebb_2^{\rm SO}$-algebras.
Recall that the handlebody category is the subcategory $\Hbdy \subset \Bord_3^\partial$ whose objects are disjoint unions of $2$-disks and whose morphisms are handlebodies.
It is the monoidal envelope of the handlebody modular operad.
\begin{cor}
    There are equivalences
    \[
        \Fun^\otimes(\Hbdy, \Ccal) 
        \simeq \Algmod_\Hcal(\Ccal)
        \simeq \Algcyc_{\Ebb_2^{\rm SO}}(\Ccal).
    \]
\end{cor}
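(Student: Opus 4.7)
The plan is to chain together the equivalences using only formal machinery, given that all the hard work has been done in the earlier sections. For the first equivalence, I would start from the observation (made after \cref{ex:handlebody-modular-operad}) that $\Hbdy = \Env(\Hcal)$. The envelope adjunction $\Env \dashv \Ucal$ from \cref{thm:envelope} then gives
\[
    \Fun^\otimes(\Hbdy, \Ccal) = \Map_{\SM}(\Env(\Hcal), \Ccal) \simeq \Map_{\Prpd}(\Hcal, \Ucal(\Ccal)) ,
\]
where I implicitly restrict $\Hcal$ to a properad along $\varphi \colon \daGr \to \Gr$. Since any such restriction is automatically rigid, and assuming completeness of $\Hcal$, \cref{thm:rigid properads} identifies the right hand side with $\Map_{\ModOp}(\Hcal, \Udual(\Ccal)) = \Algmod_\Hcal(\Ccal)$. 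This is exactly the chain of equivalences used to identify $\Ccal$-valued $d$-dimensional TFTs with modular $\Bcal_d$-algebras in the previous section.

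For the second equivalence, I would invoke the adjunction $\Indcycmod \dashv i^*\colon \ModOp \rightleftarrows \CycOp$ together with the equivalence $\Indcycmod(\Ebb_2^{\rm SO}) \simeq \Hcal$ from \cref{prop:Hbdy-free-on-E2SO}. This yields
\[
    \Algmod_\Hcal(\Ccal) = \Map_{\ModOp}(\Hcal, \Udual(\Ccal)) \simeq \Map_{\ModOp}(\Indcycmod(\Ebb_2^{\rm SO}), \Udual(\Ccal)) ,
\]
and by adjunction this is equivalent to $\Map_{\CycOp}(\Ebb_2^{\rm SO}, i^*\Udual(\Ccal)) = \Algcyc_{\Ebb_2^{\rm SO}}(\Ccal)$, using (as in the statement) that $\Udual(\Ccal)$ denotes both the modular operad and its underlying cyclic operad.

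There is no real mathematical obstacle at this point: the substantive content has been deferred to \cref{prop:Hbdy-free-on-E2SO} (attributed to Giansiracusa) and to the identification $\Env(\Hcal) = \Hbdy$, both of which we take as black boxes. The only bookkeeping task is to ensure that the completeness hypotheses needed to apply \cref{thm:rigid properads} hold for $\Hcal$ and for $\Udual(\Ccal)$; but $\Udual(-)$ lands in complete modular operads by construction (it is defined as the right adjoint landing in $\ModOp^\cpl$), and for $\Hcal$ one can either verify completeness directly using the Smale-type results cited in \cref{ex:handlebody-modular-operad}, or replace $\Hcal$ by its completion, which does not change any mapping spaces into a complete target. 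Once these mild checks are in place, the corollary follows by simply composing the adjunctions, and the two equivalences are natural in $\Ccal \in \SM$.
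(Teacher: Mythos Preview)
Your proposal is correct and follows essentially the same route as the paper: the first equivalence is exactly the chain $\Fun^\otimes(\Env(\Hcal),\Ccal) \simeq \Map_{\Prpd}(\Hcal,\Ucal(\Ccal)) \simeq \Map_{\ModOp}(\Hcal,\Udual(\Ccal))$ already used for $\Bcal_d$, and the second is the adjunction $\Indcycmod \dashv i^*$ combined with \cref{prop:Hbdy-free-on-E2SO}. Your extra care with the completeness hypotheses is appropriate, though the paper leaves this implicit.
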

In analogy with how TFTs $\Bord_2 \to \LinCat_k$ are modular functors, symmetric monoidal functors $\Hbdy \to \LinCat_k$ are called ``ansular functors'' \cite{MW23}.

\begin{rem}\label{rem:graph-bordism}
    Analogously to \cref{prop:Hbdy-free-on-E2SO} one can also describe the free modular operad on the cyclic operad $\Ebb_\infty$.
    Using Culler--Vogtmann's contractibility of ``outer space'' one can show that it has operations
    \[
        \Indcycmod(\Ebb_\infty)(\frc_k) \simeq \coprod_{n \ge 0} B\mrm{hAut}_{\ul{k}}(\vee_n S^1)
    \]
    and in particular for $k=0$ we get the classifying spaces for the groups $\mrm{Out}(F_n)$ of outer automorphisms of the free groups.
    The monoidal envelope of this modular operad is the subcategory
    \[
        \Env(\Indcycmod(\Ebb_\infty)) \subset \mrm{Cospan}(\An)
    \]
    of the \category{} of cospans of spaces, where objects are finite sets and morphisms are cospans $A \to X \leftarrow B$ such that $X$ is equivalent to a finite $1$-dimensional CW complex.
    This monoidal envelope is also almost equivalent to the graph bordism category used in Galatius' seminal paper on the stable homology of automorphism groups of free groups \cite{Galatius2011}, the only difference is that we allow univalent vertices.
\end{rem}

\subsubsection{Cyclic algebras in terms of right modules.}
Since cyclic algebras are just defined as maps of cyclic operads, we can use \cref{thm:cyclic} to express them as algebras over the underlying operad and a non-degenerate map of right modules.
Concretely, a cyclic $\Ocal$-algebra in $\Ccal$ consists of two pieces of data
\[
    A \in \Alg_{\ul{\Ocal}}(\Ccal) 
    \qqand
    \alpha\colon \omega_\Ocal \to A^*\omega_\Ccal.
\]
We can think of $A$ as a symmetric monoidal functor $\Env(\ul{\Ocal}) \too \Ccal$ that sends $x_1 \boxtimes \dots \boxtimes x_k$ to $A(x_1) \otimes \dots \otimes A(x_k)$, or simply to $A^{\otimes k}$ if $\ul{\Ocal}$ is one-coloured.
Thus, the pulled back right module $A^*\omega_\Ccal$ can be described as
\begin{align*}
    A^*\omega_\Ccal \colon \Env(\ul{\Ocal})^\op & \too \An \\
    x_1 \boxtimes \dots \boxtimes x_k & \longmapsto \Map_\Ccal(A(x_1) \otimes \dots \otimes A(x_k), \unit),
\end{align*}
which we'll usually write simply as $\Map_\Ccal(A^{\otimes-}, \unit)$.
The map $\alpha$ is a \emph{non-degenerate} map of right modules, so it is a map 
\[
    \alpha\colon \omega_\Ocal(-) \too \Map_\Ccal(A^{\otimes -}, \unit)
\]
in $\PSh(\Env(\ul{\Ocal}))$ such that it preserves non-degenerate pairings.
In other words, whenever $e \in \omega_\Ocal(x\boxtimes y)$ is a non-degenerate pairing between two colours $x,y \in \col(\ul{\Ocal})$, the resulting 
\[
    \alpha(e) \in \Map_\Ccal(A(x) \otimes A(y), \unit)
\]
is required to be a duality pairing between $A(x)$ and $A(y)$.

\begin{example}[Cyclic $\Ebb_\infty$-algebras]
    A cyclic algebra over $\Ebb_\infty$ consists of an $\Ebb_\infty$-algebra $A \in \Alg_{\Ebb_\infty}(\Ccal^\dbl)$ and a non-degenerate map $\alpha\colon \omega_{\Ebb_\infty} \to A^*\omega_\Ccal$.
    Since $\omega_{\Ebb_\infty}$ is the terminal right module we can easily compute the space of right module maps as 
    \[  
        \Map_{\PSh(\Fin)}(\omega_{\Ebb_\infty}, A^*\omega_\Ccal)
        \simeq \Map_{\PSh(\Fin)}(* , \Map_\Ccal(A^{\otimes -}, \unit))
        \simeq \lim_{\Fin^\op} \Map_\Ccal(A^{\otimes -}, \unit)
        \simeq \Map_\Ccal(A, \unit)
    \]
    where the last step uses that $\ul{1} \in \Fin^\op$ is initial.
    So the right module map $\alpha$ is uniquely determined by a map $\tau\colon A \to \unit$, which we think of as a ``trace'' on $A$.
    The value of $\alpha$ on $\ul{k}$ is then the composite
    \[
        \alpha_k = (A^{\otimes k} \xtoo{\mu} A \xtoo{\tau} \unit) \in \Map_\Ccal(A^{\otimes k}, \unit)
    \]
    where the first map is the $k$-fold multiplication (i.e.~the unit if $k=0$).
    The non-degeneracy condition thus means exactly that 
    \[
        \alpha_2 = \tau \circ \mu \colon A \otimes A \to \unit
    \]
    is a non-degenerate duality pairing between $A$ and itself.
\end{example}

We can study cyclic $\Ebb_n^{\rm SO}$-algebras similarly.
For simplicity, assume that $\Vcal \in \CAlg(\PrL)$, so that $\Vcal$ is a \category{} with sufficient colimits and with a symmetric monoidal product that preserves them.
\begin{example}[Cyclic $\EnSO$-algebras]\label{ex:cylic-EnSO-algs}
    A cyclic $\EnSO$-algebra in $\Vcal$ consists of an $\EnSO$-algebra $A \in \Alg_{\EnSO}(\Vcal^\dbl)$ (which we may describe as an $\SO(n)$-fixed point in $\Alg_{\En}(\Vcal^\dbl)$) and a non-degenerate map $\alpha\colon \omega_{\EnSO} \to A^*\omega_\Vcal$.
    Recall that $\Env(\EnSO) = \Disk_n$, so that $\RMod_{\EnSO} = \PSh(\Disk_n)$.
    Using the characterization of factorization homology from \cref{ex:factorisation-homology} we can describe the space of right module maps as
    \begin{align*}
        \Map_{\PSh(\Disk_n)}(\omega_{\EnSO}, A^*\omega_\Vcal)
        & \simeq \Map_{\PSh(\Disk_n)}(\Emb(-,S^n)\sslash \SO(n+1) , \Map_\Vcal(A^{\otimes -}, \unit)) \\
        & \simeq \Map_{\PSh(\Disk_n)}(\Emb(-,S^n), \Map_\Vcal(A^{\otimes -}, \unit))^{\SO(n+1)} \\
        & \simeq \Map_{\Vcal}\left(\int_{S^n} A, \unit\right)^{\SO(n+1)}.
    \end{align*}
    Thus, the data of $\alpha$ is equivalent to an $\SO(n+1)$-invariant map
    \[
        \tau\colon \int_{S^n} A \too \unit,
    \]
    which we could also further write as a map $(\int_{S^n} A)\sslash \SO(n+1) \to \unit$.
    We can think of this as a ``symmetric trace'' on $A$.
    The non-degeneracy condition means that the composite map
    \[
        A \otimes A = \int_{D^n \sqcup D^n} A \too \int_{S^n} A \too \unit
    \]
    is a non-degenerate duality pairing between $A$ and itself.
\end{example}

\subsubsection{Calabi--Yau algebras.}
We can summarize the structure we found above as follows.

\begin{defn}
    An \hldef{$\Ebb_\infty$-Calabi--Yau algebra} in $\Ccal$ is a tuple $(A,\tau)$ of an $\Ebb_\infty$-algebra $A$ in $\Ccal$ and a non-degenerate trace $\tau$, i.e.~a map $\tau\colon A \to \unit$ such that it induces a self-duality on $A$ as above.

    Similarly, an \hldef{$\EnSO$-Calabi--Yau algebra} in $\Ccal$ is a tuple $(A,\tau)$ of an $\EnSO$-algebra $A$ in $\Ccal$ and an $\SO(n+1)$-invariant map $\tau\colon \int_{S^n} A \to \unit$ such that it induces a self-duality on $A$ as above.
\end{defn}

\begin{cor}\label{cor:CY-algebras}
    There is an equivalence between cyclic $\Ebb_\infty$/$\EnSO$-algebras in $\Ccal$ and $\Ebb_\infty$/$\EnSO$-Calabi--Yau algebras in $\Ccal$.
\end{cor}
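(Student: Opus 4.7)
The plan is to deduce this corollary directly by unwinding the two examples immediately preceding it, using \cref{thm:cyclic} (the equivalence $\CycOp^{\cpl} \simeq \RMod^{\dual,\cpl}$) as the main engine. By that theorem, a cyclic $\Ocal$-algebra in $\Ccal$ is the same as a map $(\ul{\Ocal}, \omega_\Ocal) \to (\ul{\Ucal}(\Ccal), \omega_\Ccal)$ in $\RMod^{\dual}$, i.e.\ a pair $(A,\alpha)$ where $A \in \Alg_{\ul{\Ocal}}(\Ccal^{\dbl})$ is an algebra valued in dualisable objects and $\alpha\colon \omega_\Ocal \to A^*\omega_\Ccal$ is a map of right modules that preserves non-degenerate pairings. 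This is precisely the data I will match with the Calabi--Yau definition.

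For the $\Ebb_\infty$ case, the first step is to compute $\Map_{\PSh(\Fin)}(\omega_{\Ebb_\infty}, A^*\omega_\Ccal)$. Since $\omega_{\Ebb_\infty}$ is the terminal right module and $\ul{1} \in \Fin^\op$ is initial, this mapping space is equivalent to $\Map_\Ccal(A,\unit)$, so the datum of $\alpha$ reduces to a single trace $\tau\colon A \to \unit$, with $\alpha_k = \tau \circ \mu_k$. Second, I need to translate the non-degeneracy condition on $\alpha$: because $\Ebb_\infty$ is one-coloured and every colour must admit a dual with respect to $\omega_\Ocal$, the only possible pairing to check is the one in $\omega_{\Ebb_\infty}(\ul{1}\boxtimes \ul{1})$, which must map to a non-degenerate pairing $\tau \circ \mu\colon A \otimes A \to \unit$ in $\omega_\Ccal$. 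This is exactly the $\Ebb_\infty$-Calabi--Yau condition, so the equivalence of spaces of objects is immediate.

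For the $\EnSO$ case the strategy is identical, with the two computations already carried out in \cref{ex:cylic-EnSO-algs}: the mapping space of right modules is identified, via $\SO(n+1)$-equivariance and the defining property of factorization homology from \cref{ex:factorisation-homology}, with $\Map_\Vcal(\smallint_{S^n} A, \unit)^{\SO(n+1)}$, giving the symmetric trace $\tau$. Non-degeneracy at the unique colour pairing in $\omega_{\EnSO}(\Rbb^n \boxtimes \Rbb^n)$ (represented by the embedding of two disjoint disks into $S^n$) becomes, via $\smallint_{D^n \sqcup D^n} A = A \otimes A$ and the $\Disk_n$-functoriality of $\omega_{\EnSO}$ applied to $D^n \sqcup D^n \hookrightarrow S^n$, precisely the condition that $A \otimes A \to \smallint_{S^n} A \to \unit$ is a duality pairing. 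Combined with the already-known description $\Alg_{\Ebb_n^{\SO}}(\Ccal) \simeq \Alg_{\Ebb_n}(\Ccal)^{h\SO(n)}$ this matches the definition of an $\EnSO$-Calabi--Yau algebra.

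The only real subtlety, and thus the main point to handle with care, is checking that in each case it suffices to impose non-degeneracy at \emph{one} pairing rather than at \emph{all} pairings of $\omega_\Ocal$; this uses that both $\Ebb_\infty$ and $\EnSO$ are one-coloured so the non-degenerate pairings of $\omega_\Ocal$ form a single $\mathrm{Aut}$-orbit (in fact the pairing is unique for $\Ebb_\infty$ and forms a connected $\SO(n+1)$-quotient for $\EnSO$), and right-module maps send such orbits into orbits of pairings in $\omega_\Ccal$. Once this observation is in place, the two equivalences are purely formal consequences of the identifications of mapping spaces above.
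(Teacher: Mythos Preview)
Your proposal is correct and follows exactly the paper's approach: the corollary is simply a summary of the two preceding examples, which unwind \cref{thm:cyclic} to identify the right-module datum $\alpha$ with a trace $\tau$ and the non-degeneracy condition with the Calabi--Yau self-duality condition. Your added observation that it suffices to check non-degeneracy at a single pairing (since both operads have a connected space of colours) is a useful clarification that the paper leaves implicit.
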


\begin{example}
    For $n=1$ an $\Ebb_1$-Calabi--Yau algebra consists of an $\Ebb_1$-algebra $A$ together with an $\SO(2)$-invariant map $\tau\colon \int_{S^1} A \to \unit$ such that the induced pairing
    \[
        A \otimes A = \int_{D^1 \sqcup D^1} A \too \int_{S^1} A \xtoo{\tau} \unit
    \]
    is a duality pairing.
    In the case of the category of vector spaces $\int_{S^1} A = HH_0(A) = A/[A,A]$, and thus we can think of the trace as a function $\tau\colon A \to k$ that vanishes on the commutator; or equivalently that satisfies $\tau(ab) = \tau(ba)$.
    Thus, this recovers the notion of a symmetric Frobenius algebra.
\end{example}

\begin{rem}
    In \cite[\S4.6.5]{HA} Lurie defines a ``symmetric Frobenius algebra'' to be an $\Ebb_1$-algebra $A$ with a non-degenerate trace $\int_{S^1} A \to \unit$.
    He notes that one can also add $\SO(2)$-invariance data to the definition and that this condition plays an important role in the classification of (extended) $2$D TFTs \cite[Remark 4.6.5.9]{HA}.
\end{rem}

\begin{rem}
    The definition of $\Ebb_1$-Calabi--Yau algebras in the derived category $\Dcal(k)$ of a field $k$ agrees with the definition of a $1$-object case of a proper $A_\infty$-Calabi--Yau category as for example in \cite[Definition 1.5]{BravDykerhoff2019}.
\end{rem}

Combining the above consequences of \cref{thm:cyclic} with the fact that the handlebody modular operad $\Hcal$ is freely generated by the cyclic operad $\EtwoSO$ (\cref{prop:Hbdy-free-on-E2SO}) we get a classification of symmetric monoidal functors out of the handlebody category
\begin{cor}\label{cor:ansular-functors}
    There is an equivalence
    \[
        \Fun^\otimes(\Hbdy, \Vcal) 
        \simeq \{ (A, \tau) \;|\; A \in \Alg_{\EtwoSO}(\Vcal),\, \tau\colon \int_{S^2} A \too \unit \; \SO(3)\text{-invariant and non-deg.}\}
    \]
\end{cor}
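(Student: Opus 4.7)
The plan is to chain together the main results developed in the preceding sections so that the left-hand side is successively translated into cyclic-operadic data and then unpacked in terms of factorization homology. The identity on which everything hinges is $\Hbdy = \Env(\Hcal)$, together with the induction-restriction adjunction $\Indcycmod \dashv i^\ast$ between cyclic and modular operads, and the fact (Proposition \ref{prop:Hbdy-free-on-E2SO}) that $\Hcal \simeq \Indcycmod(\EtwoSO)$. None of the individual steps require new work beyond invoking these.

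First I would apply the envelope--underlying adjunction from \cref{thm:envelope}, combined with the passage from complete rigid properads to complete modular operads in \cref{thm:rigid properads}, to identify
\[
    \Fun^\otimes(\Hbdy, \Vcal) = \Fun^\otimes(\Env(\Hcal), \Vcal) \simeq \Algmod_{\Hcal}(\Vcal).
\]
Next, I would use Proposition \ref{prop:Hbdy-free-on-E2SO} to rewrite $\Hcal \simeq \Indcycmod(\EtwoSO)$, and then the adjunction $\Indcycmod \dashv i^\ast$ to obtain
\[
    \Algmod_{\Hcal}(\Vcal) \simeq \Map_{\ModOp}(\Indcycmod(\EtwoSO), \Udual(\Vcal)) \simeq \Map_{\CycOp}(\EtwoSO, \Udual(\Vcal)) = \Algcyc_{\EtwoSO}(\Vcal).
\]

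Finally, I would invoke \cref{cor:CY-algebras} in the case $n=2$: a cyclic $\EtwoSO$-algebra in $\Vcal$ is equivalently the data of an $\EtwoSO$-algebra $A$ (automatically with $A$ dualizable because it must land in $(\Ucal(\Vcal))^{\rm dual}$) together with an $\SO(3)$-invariant map $\tau\colon \int_{S^2} A \to \unit$ such that the induced pairing $A \otimes A = \int_{D^2 \sqcup D^2} A \to \int_{S^2} A \xrightarrow{\tau} \unit$ is non-degenerate. Composing all three equivalences yields precisely the claimed description of $\Fun^\otimes(\Hbdy, \Vcal)$.

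The only subtle point, and the main thing to double-check, is the interaction with completeness and with the forgetful functor $\psi^\ast \colon \CycOp^\cpl \to \RMod^{\dual,\cpl}$ of \cref{thm:cyclic} that is implicitly used in unpacking $\Algcyc_{\EtwoSO}(\Vcal)$ into $\EtwoSO$-Calabi--Yau data: one needs that $\Udual(\Vcal)$ is complete and that its underlying operad together with the dualizing right module from \cref{ex:U(C)-dualizing-module} really is what \cref{thm:cyclic} produces. Both assertions are by construction, but they are the spot at which care is required; everything else is a formal concatenation of adjunctions.
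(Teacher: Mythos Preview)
Your proposal is correct and follows essentially the same route as the paper: the paper's argument is exactly to combine the earlier corollary $\Fun^\otimes(\Hbdy,\Vcal)\simeq\Algmod_\Hcal(\Vcal)\simeq\Algcyc_{\EtwoSO}(\Vcal)$ (obtained from the envelope adjunction and \cref{prop:Hbdy-free-on-E2SO}) with \cref{cor:CY-algebras} for $n=2$. Your remark about completeness and the identification of dualizing modules is a reasonable point of care but not something the paper singles out.
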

In the case of $\Vcal = \LinCat_k$ (up to the caveats in \cref{rem:LinCat-caveat}) this gives an alternative description of the ansular functors from \cite{MW23}.
(This is not very surprising, as our reasoning here is analogous to the reasoning used in \cite{MuellerWoike-GV,MW23} where the authors also use cyclic operads and their envelopes.)

An analogous universal property holds for the category $\Env(\Indcycmod(\Ebb_\infty))$ from \cref{rem:graph-bordism}.

\section{The genus filtration}

In the previous lecture we discussed that the handlebody modular operad $\Hcal$ is freely generated by its genus $0$ part $\Hcal^{\rm sph}$, which in turn is equivalent to $\EtwoSO$.
Together with the description of cyclic operads as operads with dualizing modules, this allowed us to ``classify'' symmetric monoidal functors 
    $\Hbdy = \Env(\Hcal) \to \Ccal$
in terms of $\EtwoSO$-Calabi--Yau algebras in $\Ccal$.
Since the map $\partial \colon \Hcal \to \Mcal = \Bcal_2$ that sends a handlebody to its boundary is a bijection on $\pi_0$, this recovers the classification of $2$D TFTs in $\Vect_k$ as commutative Frobenius algebras. 
For more general targets $\Ccal \in \SM$ the restriction 
\[
    \Fun^\otimes(\Bord_2, \Ccal) \too \Fun^\otimes(\Hbdy, \Ccal)
\]
along $\partial$ won't be an equivalence, but in this lecture we will see that it is the starting point of a convergent tower that computes the space of $2$D TFTs valued in $\Ccal$.
Since the cyclic operads $\EtwoSO \simeq \Hcal^{\rm sph} \simeq \Mcal^{\rm sph}$ agree, we can think of $\Hcal \simeq \Indcycmod(\EtwoSO)$ as a ``genus $0$ approximation'' to $\Mcal$.
We'll now construct higher genus approximations and study their convergence.

\subsection{Filtering modular operads}

\subsubsection{Graded modular operads.}
To define graded modular operads we need to add ``genus gradings'' to our graphs.
This can be done using the grading modular operad from \cref{ex:grading-modular-operad}.
\begin{defn}\label{defn:gGr}
    The category of graded graphs \hldef{$\gGr$} is defined as the unstraightening
    \[
        \hldef{\gGr} \coloneq \Un(\Gr \xtoo{\gr} \Sets \subset \An)
    \]
    of the grading modular operad $\gr$.
\end{defn}

\begin{figure}[ht]
    \centering
    \def\svgwidth{.6\linewidth}
    \import{figures/}{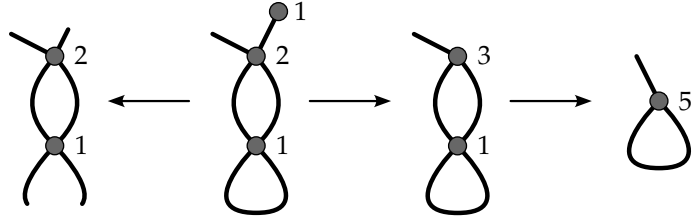}
    \caption{Four objects and three maps in $\gGr$. The left-ward map is inert, and the others are active. The first two graphs are in $\gGr^{\le 2}$, but the other two are not.}
    \label{fig:graded-graph-map}
\end{figure}

Therefore, a graded graph is a graph $\Gamma$ together with a grading $l\colon V(\Gamma) \to \Nbb$ and the morphisms in the category are such that they add the grading of vertices that are identified plus one for every loop that is collapsed.
See \cref{fig:graded-graph-map} for some examples of graded graph maps.
Using this, we define the category of graded modular operads as Segal objects
\[
    \grModOp \coloneq \Seg(\gGr) \simeq \Seg(\Gr)_{/\gr} = (\ModOp)_{/\gr}.
\]
(The comparison functor $\grModOp \to \ModOp$ is defined by left Kan extending along $\gGr \to \Gr$. This preserves the Segal condition and sends the terminal graded modular operad to $\gr$. This is an instance of a general principle for left fibrations over algebraic patterns \cite[Proposition 3.2.5]{HK21})

Both the surface and the handlebody modular operad are graded with
\[
    \Mcal(\frc_k^{(g)}) = B\Diff(\Sigma_{g,k})
\]
where $\hldef{\frc_k^{(g)}}$ denotes the arity $k$ corolla $\frc_k$, labelled by $g \in \Nbb$.
Alternatively, we can describe the grading as a map $\Mcal \to \gr$ that records the genus of each of the components.
That this works is of course no coincidence: we chose $\gr$ so that $\gr = \pi_0(\Mcal)$.

\subsubsection{Bounding genus.}
We can define a filtration
\[
    \gGr^{\le 0} \subset \gGr^{\le 1} \subset \gGr^{\le 2} \subset \dots \subset \gGr
\]
where $\hldef{\gGr^{\le g}} \subset \gGr$ is the full subcategory on those graded graphs $(\Gamma,l)$ such that $l(v) \le g$ for all $v \in V_\Gamma$.
We refer to the Segal spaces over these graph patterns as \hldef{genus-restricted modular operads} and denote the category by
\[
    \hldef{\grModOp^{\le g}} \coloneq \Seg(\gGr^{\le g}).
\]
Genus $\le0$ restricted modular operads are in fact just cyclic operads:
\begin{lem}\label{lem:gGr0-slim}
    Restriction along the full inclusion $i\colon \Tree \hookrightarrow \gGr^{\le 0}$ induces an equivalence
    \[  
        i^*\colon \Seg(\gGr^{\le 0}) \xtoo{\simeq} \Seg(\Tree) = \CycOp
    \]
    whose inverse is given by right Kan extension.
    In particular, every Segal space on $\gGr^{\le 0}$ is right Kan extended from the full subcategory $\Tree$.
\end{lem}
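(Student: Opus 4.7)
Plan: The strategy rests on the observation that the elementary objects of $\gGr^{\le 0}$ are precisely the edge $\fre$ and the genus-$0$ corollas $\frc_k^{(0)}$ for $k \ge 0$, all of which already lie in $\Tree$. The Segal condition on $\gGr^{\le 0}$ therefore forces the value of any Segal space to be determined by its restriction to the full subcategory spanned by trees.

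First I would verify that $i\colon \Tree \hookrightarrow \gGr^{\le 0}$ is a morphism of algebraic patterns in the sense of \cite{patterns1}: it preserves the inert-active factorization, and its restriction to elementaries is an equivalence $\Tree^\el \simeq \gGr^{\le 0,\,\el}$ (both sides consist of the edge and the $k$-corollas, necessarily labelled by genus $0$ on the right). This ensures that $i^\ast$ preserves the Segal condition and gives the functor $i^\ast\colon \Seg(\gGr^{\le 0}) \to \Seg(\Tree)$. The core step is to show that right Kan extension along $i$ provides an inverse. For $G \in \Seg(\Tree)$ and $\Gamma \in \gGr^{\le 0}$, the pointwise Kan formula reads
\[
    (i_\ast G)(\Gamma) \simeq \lim_{(T,\,\phi\colon \Gamma \to iT) \in (\Gamma \downarrow i)} G(T).
\]
I would argue by cofinality that this limit coincides with the Segal limit $\lim_{\rho\colon \Gamma \intto E} G(E)$ over inert morphisms landing in $E \in \gGr^{\le 0,\,\el} = \Tree^\el$. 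The key input is: any morphism $\Gamma \to T$ in $\gGr^{\le 0}$ factors uniquely as $\Gamma \intto \Sigma \actto T$, and the Segal condition on $G$ at $T$ reduces $G(T)$ to a limit over the elementaries of $T$; composing the two reductions, the subposet of inert maps from $\Gamma$ to elementary trees becomes cofinal in $(\Gamma \downarrow i)^{\op}$. Since this formula is also the Segal formula in $\gGr^{\le 0}$, the functor $i_\ast G$ is automatically Segal. The unit $G \to i^\ast i_\ast G$ is an equivalence (it is the identity on elementaries and both sides are Segal on $\Tree$), and the counit $i_\ast i^\ast F \to F$ for $F \in \Seg(\gGr^{\le 0})$ is an equivalence on every object by the same Segal reduction.

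The hard part is the cofinality step. A graded graph $\Gamma \in \gGr^{\le 0}$ can contain cycles, and compatibility of gradings forces any morphism $\Gamma \to T$ (with $T$ a tree labelled in genus $0$) to send every cycle of $\Gamma$ to $\infty$: the relation $l_T(v) = b_1(f^{-1}(v)) + \sum_{w \in V(T) \cap f^{-1}(v)} l_\Gamma(w) = 0$ forces $b_1(f^{-1}(v)) = 0$ for every $v \in V(T)$, so each vertex-preimage is contractible. This is precisely what shrinks $(\Gamma \downarrow i)$ to the "tree-like resolutions" of $\Gamma$ and makes the cofinality argument go through cleanly; once that combinatorial analysis is in place, the remainder of the proof is formal pattern theory.
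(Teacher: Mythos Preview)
Your proposal is correct and follows essentially the same strategy as the paper: show that $i_\ast$ preserves the Segal condition, then conclude using that $i^\ast$ is conservative (since $\Tree$ contains all elementaries of $\gGr^{\le 0}$). The combinatorial input you isolate, that the grading formula forces $b_1(f^{-1}(v)) = 0$ for any map into a genus-$0$ tree, is exactly the right one.

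The paper's execution of the hard step is somewhat cleaner than your cofinality sketch. Rather than arguing directly that inert-to-elementary maps are cofinal in $(\Gamma \downarrow i)^{\op}$, the paper verifies a Beck--Chevalley condition for the square
\[
\begin{tikzcd}
\Tree^{\xint} \ar[r, hook] \ar[d] & (\gGr^{\le 0})^{\xint} \ar[d] \\
\Tree \ar[r, hook] & \gGr^{\le 0},
\end{tikzcd}
\]
which reduces to showing that
\(
\Tree^{\xint} \times_{(\gGr^{\le 0})^{\xint}} (\gGr^{\le 0})^{\xint}_{\Gamma/} \to \Tree \times_{\gGr^{\le 0}} \gGr^{\le 0}_{\Gamma/}
\)
is initial. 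This is established by exhibiting a right adjoint: send $f\colon \Gamma \to \Lambda$ to the inert half $\Gamma \intto \Lambda'$ of its inert--active factorization. The point is that $\Lambda'$ is again a tree, because an active map $\Lambda' \actto \Lambda$ in $\gGr^{\le 0}$ with $\Lambda$ a tree forces $\Lambda'$ to be a tree---this is precisely your $b_1(f^{-1}(v)) = 0$ observation, rephrased. Your ``cycles go to $\infty$'' formulation says the same thing about general (possibly non-active) maps, but packaging it via the factorization system avoids having to assemble a cofinality argument by hand.
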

\begin{proof}[Proof idea]
    It suffices to show that right Kan extension preserves the Segal condition.
    (Then it will define a fully faithful right adjoint, but $i^*$ is conservative because $\Tree$ contains all the elementary graphs of $\gGr^{\le 0}$, so the adjunction is an equivalence.)
    Because the Segal condition can be expressed as the functor on inerts being right Kan extended from elementaries, this follows by checking that the square of inclusions
    \[\begin{tikzcd}
        \Tree^{\xint} \ar[r, hook, "i^\xint"] \ar[d, "j"] & (\gGr^{\le 0})^{\xint} \ar[d, "k"] \\
        \Tree \ar[r, hook, "i"] & \gGr^{\le 0}
    \end{tikzcd}\]
    satisfies that the Beck--Chevalley transformation $i^\ast k_\ast^{\xint} \to j_\ast (i^\xint)^\ast$ is an equivalence.
    This amounts to showing that the functor
    \[
        \Tree^{\xint} \times_{(\gGr^{\le 0})^{\xint}} (\gGr^{\le 0})^{\xint}_{\Gamma/}
        \too
        \Tree \times_{\gGr^{\le 0}} \gGr^{\le 0}_{\Gamma/}
    \]
    is initial.
    This is indeed the case because this functor has a right adjoint, given by sending $f\colon \Gamma \to \Lambda$ to the inert part $f^\xint\colon \Gamma \intto \Lambda'$ of the unique inert-active-factorization of $f$.
    (This is well-defined, because if $\Lambda$ is a tree and $f^\act\colon \Lambda' \actto \Lambda$ in $\gGr^{\le 0}$ is active, then $\Lambda'$ is also a tree.)
\end{proof}

\begin{rem}
    \cref{lem:gGr0-slim} is an instance of a general principle for algebraic patterns \cite[Definition 14.7]{patterns1}, see also \cite[Corollary 2.64]{Shaul-arity}:
    for any algebraic pattern $\Pcal$ we can form a full subcategory $\Pcal^{\slim} \subset \Pcal$, which contains all those objects $x$ for which there exists an active map $x \actto e$ with $e$ elementary.
    This full subcategory inherits the pattern structure and the restriction along the full inclusion $i\colon \Pcal^\slim \hookrightarrow \Pcal$ induces an equivalence
    \[
        i^*\colon \Seg(\Pcal) \xtoo{\simeq} \Seg(\Pcal^\slim)
    \]
    whose inverse is given by right Kan extension.
    In the example of \cref{lem:gGr0-slim}, if we have an active morphism $\Gamma \actto \frc_k^{(0)}$, then $\Gamma$ must be a tree, as otherwise the corolla would have to be labelled by a positive number.
    Therefore, the slim subpattern of $\gGr^{\le 0}$ is exactly $\Tree$.
\end{rem}

Restricting along the inclusions $i_g^{g'}\colon \gGr^{\le g} \hookrightarrow \gGr^{\le g'}$ preserves the Segal condition, and thus we get a tower
\[
    \grModOp \too \dots \too
    \grModOp^{\le 2} \too \grModOp^{\le 1} \too \grModOp^{\le 0} \simeq \CycOp.
\]
Because the filtration of $\gGr$ was exhaustive (as every graded graph lies in $\gGr^{\le g}$ for some $g$) this tower converges in the sense that
\[
    \grModOp \simeq \lim_{g \to \infty} \grModOp^{\le g}.
\]

\subsubsection{Inducing up.}
So far we have described how to approximate the \category{} of modular operads, but really we would like to approximate objects of this category, such as $\Mcal$, rather than the category itself.
In other words, having found the right categorical setting, we should now ``go down one category level'' to reap the benefits.
(In fact, we'll want to go two category levels, because what we truly care about are modular algebras, which are maps of modular operads.)
To do this, we'll approximate $\Mcal \in \grModOp$ by the modular operad freely built from the restricted modular operads $(i_g^\infty)^\ast \Mcal \in \grModOp^{\le g}$.
This means that we'll consider the left adjoints, for $g\le g'$,
\[
    \hldef{\Ind_g^{g'}} \colon \grModOp^{\le g} \adj \grModOp^{\le g'} \cocolon (i_g^{g'})^\ast.
\]
These left adjoints exist for fairly formal reasons, but more importantly, they are actually computable in terms of left Kan extension.
\begin{lem}
    Left Kan extension along $i_g^{g'}$ preserves Segal spaces and thus defines a left-adjoint
    \[
        \Ind_g^{g'} = (i_g^{g'})_! \colon \grModOp^{\le g} = \Seg(\gGr^{\le g}) \adj \Seg(\gGr^{\le g'}) = \grModOp^{\le g'} \cocolon (i_g^{g'})^\ast.
    \]
    In particular, the left adjoint $\Ind_g^{g'}$ is fully faithful.
\end{lem}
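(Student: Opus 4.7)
The proof proceeds in two stages: first, establish that left Kan extension along $i \coloneq i_g^{g'}$ preserves the Segal condition; second, derive full faithfulness of $(i_g^{g'})_!$ as a formal consequence. The plan is to work in the framework of algebraic patterns from \cite{patterns1}, using that $i$ is a fully faithful morphism of patterns which preserves and reflects elementaries and which preserves the inert-active factorization system.

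For preservation of the Segal condition, given $F \in \Seg(\gGr^{\le g})$ I would verify the pullback characterization of \cref{lem:Segal-condition-pullback} for $i_! F$ at each $\Gamma \in \gGr^{\le g'}$. When $\Gamma$ already lies in $\gGr^{\le g}$, full faithfulness of $i$ gives $(i_! F)(\Gamma) \simeq F(\Gamma)$ and the Segal condition at $\Gamma$ transports directly. The substantive case is when $\Gamma$ contains a vertex of grade $h > g$; here one must compute
\[
(i_! F)(\Gamma) \simeq \colim_{(\Lambda, \phi) \in \gGr^{\le g}_{/\Gamma}} F(\Lambda)
\]
and compare this to the pullback over the elementary pieces of $\Gamma$, several of which are new corollas $\frc_k^{(h)}$ with $h > g$ whose value on $i_! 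F$ is itself a non-trivial colimit.

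The main combinatorial input is a decomposition of the slice $\gGr^{\le g}_{/\Gamma}$ along the vertex decomposition of $\Gamma$: any map $\Lambda \to \Gamma$ admits a unique-up-to-contractible-choice inert-active factorization $\Lambda \intto \Lambda' \actto \Gamma$, and the active part corresponds to a blow-up in which each vertex $v$ of $\Gamma$ of grade $h_v$ is replaced by a connected graded subgraph of $\Lambda'$ whose total grade-plus-loop-count equals $h_v$. Thus, up to cofinality, the slice splits as a limit over $V(\Gamma)$ of the one-vertex slices $\gGr^{\le g}_{/\frc_v}$, glued along the common edges. Combining this with the Segal condition on $F$ (used to write each $F(\Lambda)$ as a pullback over its own elementary pieces) and carefully commuting the resulting colimits with the finite limits arising from the Segal decomposition, one obtains that $i_! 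F$ also satisfies the pullback characterization on $\gGr^{\le g'}$.

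Once preservation is established, the adjunction $(i_g^{g'})_! \dashv (i_g^{g'})^*$ between Segal \categories{} follows from the presheaf-level adjunction $i_! \dashv i^*$, and full faithfulness of $(i_g^{g'})_!$ is automatic, because left Kan extension along a fully faithful functor is fully faithful on presheaves (the unit is an equivalence) and this property restricts to the Segal subcategory. The hard part will be the cofinality argument needed to reduce the slice $\gGr^{\le g}_{/\Gamma}$ to vertex-by-vertex contributions and, simultaneously, to show that the space of blow-ups of a single high-grade corolla is sufficiently contractible for the colimit-limit commutation to go through. This is in the same spirit as the contractibility of directed acyclic subdivisions used in the proof of \cref{thm:rigid properads}, and cleanly packaging the argument via extendability of $\gGr$ in the sense of \cite[Definition 8.5]{patterns1} would likely subsume the verification into general nonsense about extendable patterns.
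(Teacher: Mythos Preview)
Your approach is essentially the paper's, just unpacked: the paper's proof is a one-line citation to the general theory of algebraic patterns \cite{patterns1,patterns3}, deducing the result from the fact that $\Gr$ is a soundly extendable pattern---exactly the condition you identify in your final sentence as subsuming the hands-on colimit-limit verification. The one technical caveat you miss is that, as flagged in \cref{war:nodeless-loop}, one should adjoin the nodeless loop $\frs$ to make the pattern genuinely extendable before invoking the machinery; this does not change the resulting category of Segal spaces but is needed for the formal argument to apply cleanly.
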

\begin{proof}
    This can be deduced formally from the theory of algebraic patterns \cite{patterns1,patterns3} once one checks that $\Gr$ is a soundly extendable pattern.
    (For this argument it is better to include the nodeless loop $\frs = S^1$ from \cref{war:nodeless-loop} as it ensures that $\Gr$ is extended.)
\end{proof}

\begin{defn}
    For a graded modular operad $\Mcal$ and $0\le g \le \infty$ we define its \hldef{genus $g$ approximation} to be the graded modular operad
    \[
        \hldef{\Ocal^{(g)}} \coloneq \Ind_g^{g'}( (i_g^{g'})^* \Ocal ).
    \]
    These assemble into the \hldef{genus filtration of $\Ocal$}
    \[
        \Ocal^{(0)} \too \Ocal^{(1)} \too \Ocal^{(2)} \too \dots \too \Ocal.
    \]
\end{defn}

Because all these constructions are functorial the genus filtration defines a functor
\[
    \grModOp \too \Fun((\Nbb,\le), \grModOp).
\]
It follows formally that this filtration is convergent, that is, $\Ocal = \colim_{g \to \infty} \Ocal^{(g)}$.

\begin{example}
    By \cref{prop:Hbdy-free-on-E2SO} we have that
    \[
        \Mcal^{(0)} \simeq \Hcal^{(0)} \simeq \Hcal.
    \]
\end{example}

\subsection{The filtration on \texorpdfstring{$\Mcal$}{M}}

\subsubsection{Restricted cut systems.}
In order to compute $\Mcal^{(g)}$ we introduce the following variant of cut systems.
\begin{defn}
    Let $\Sigma$ be a surface and $0\le g\le \infty$.
    We say that $S \subset \Sigma$ is a \hldef{genus $\le g$ cut system} if
    \begin{enumerate}
        \item $S$ is a disjoint union of essential curves 
        \item each component of $\Sigma \setminus S$ has genus at most $g$.
    \end{enumerate}
    The set of all genus $\le g$ cut systems is topologised with the $C^\infty$-topology and forms a topological poset $\hldef{\Cut^g(\Sigma)}$ under $\subseteq$.
\end{defn}

\begin{figure}[ht]
    \centering
    \def\svgwidth{.6\linewidth}
\begingroup%
  \makeatletter%
  \providecommand\color[2][]{%
    \errmessage{(Inkscape) Color is used for the text in Inkscape, but the package 'color.sty' is not loaded}%
    \renewcommand\color[2][]{}%
  }%
  \providecommand\transparent[1]{%
    \errmessage{(Inkscape) Transparency is used (non-zero) for the text in Inkscape, but the package 'transparent.sty' is not loaded}%
    \renewcommand\transparent[1]{}%
  }%
  \providecommand\rotatebox[2]{#2}%
  \newcommand*\fsize{\dimexpr\f@size pt\relax}%
  \newcommand*\lineheight[1]{\fontsize{\fsize}{#1\fsize}\selectfont}%
  \ifx\svgwidth\undefined%
    \setlength{\unitlength}{1012.21971887bp}%
    \ifx\svgscale\undefined%
      \relax%
    \else%
      \setlength{\unitlength}{\unitlength * \real{\svgscale}}%
    \fi%
  \else%
    \setlength{\unitlength}{\svgwidth}%
  \fi%
  \global\let\svgwidth\undefined%
  \global\let\svgscale\undefined%
  \makeatother%
  \begin{picture}(1,0.23999765)%
    \lineheight{1}%
    \setlength\tabcolsep{0pt}%
    \put(0,0){\includegraphics[width=\unitlength,page=1]{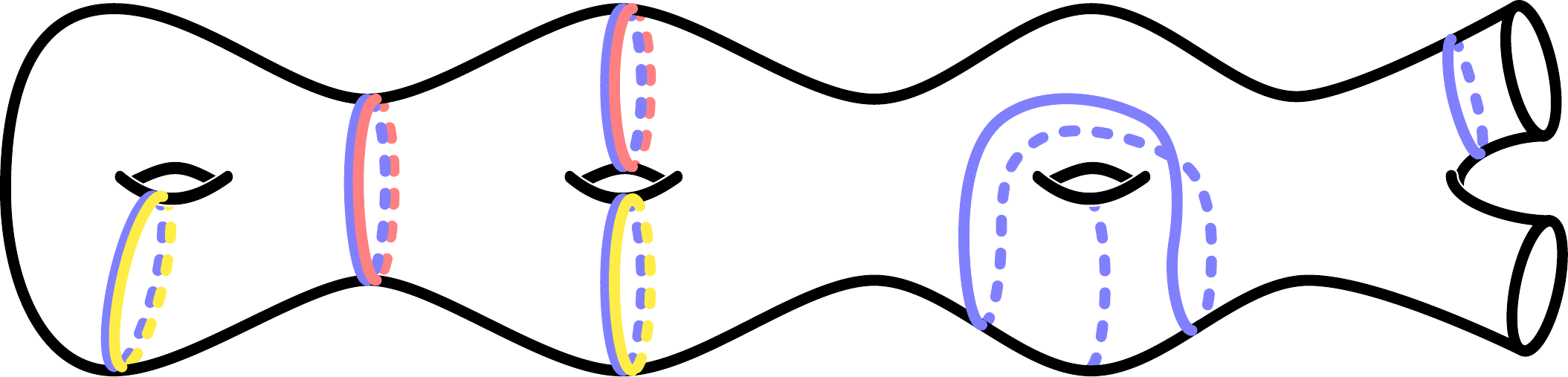}}%
    \put(0.11798036,0.06008475){\color[rgb]{0.93333333,0.83921569,0}\makebox(0,0)[lt]{\lineheight{1.25}\smash{\begin{tabular}[t]{l}$S_1$\end{tabular}}}}%
    \put(0.77579938,0.0113043){\color[rgb]{0.50196078,0.50196078,1}\makebox(0,0)[lt]{\lineheight{1.25}\smash{\begin{tabular}[t]{l}$S_2$\end{tabular}}}}%
    \put(0.41832605,0.16169025){\color[rgb]{1,0.50196078,0.50196078}\makebox(0,0)[lt]{\lineheight{1.25}\smash{\begin{tabular}[t]{l}$S_3$\end{tabular}}}}%
    \put(0,0){\includegraphics[width=\unitlength,page=2]{Cut_g.pdf}}%
  \end{picture}%
\endgroup%

    \caption{Three cut-systems in $\Cut^1(\Sigma_{3,2})$ such that $S_1 \subset S_2 \supset S_3$, but $S_1$ and $S_3$ are not related.}
    \label{fig:Cut_g}
\end{figure}

\begin{example}
    The cut systems in \cref{fig:Cut_g} are in $\Cut^1(\Sigma_{3,2})$ but only $S_2$ is in $\Cut^0(\Sigma_{3,2})$.
\end{example}

This topological poset appears naturally when computing the genus $g$ approximation of $\Mcal$.
\begin{lem}\label{lem:Cutg-as-fiber}
    For all $g, g', k \in \Nbb$ there are equivalences
    \[
        \Mcal^{(g)}(\frc_k^{(g')}) \simeq |\Cut^g(\Sigma_{g,k})| \sslash \Diff(\Sigma_{g,k})
    \]
    and thus there are fiber sequences
    \[
        |\Cut^g(\Sigma_{g,k})| \too \Mcal^{(g)}(\frc_k^{(g')}) \too \Mcal(\frc_k^{(g')}).
    \]
\end{lem}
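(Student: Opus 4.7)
(Throughout I read the right-hand side with $\Sigma_{g',k}$ rather than $\Sigma_{g,k}$, since the fibre sequence is only consistent with $\Mcal(\frc_k^{(g')}) = B\Diff(\Sigma_{g',k})$ if the surface in the fibre has genus $g'$.) The plan is to expand $\Mcal^{(g)}$ as an honest left Kan extension, reduce the resulting colimit via the Segal condition to a diagram indexed by combinatorial cut-systems, and then identify that diagram with the homotopy orbits of $\Diff(\Sigma_{g',k})$ acting on the topological poset $\Cut^g(\Sigma_{g',k})$.

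First I would use the preceding lemma to write
\[
    \Mcal^{(g)}(\frc_k^{(g')}) \;\simeq\; \colim_{\big((\Gamma,l) \xrightarrow{f} \frc_k^{(g')}\big)\,\in\, (\gGr^{\le g})_{/\frc_k^{(g')}}} \Mcal(\Gamma,l),
\]
and then invoke the inert/active factorisation on $\gGr$ together with the algebraic-patterns machinery of \cite{patterns1} to replace this with a colimit over just the active slice $(\gGr^{\le g})^{\rm act}_{/\frc_k^{(g')}}$. On each active $f\colon (\Gamma,l)\actto \frc_k^{(g')}$, the Segal condition for $\Mcal$ gives
\[
    \Mcal(\Gamma,l) \;\simeq\; \prod_{v\in V(\Gamma)} B\Diff(\Sigma_{l(v),k_v}),
\]
fibred over $B\SO(2)^{\times E(\Gamma)}$, where $k_v$ is the valence of $v$.

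The second step is geometric. Every active $(\Gamma,l) \actto \frc_k^{(g')}$ realises $\Gamma$ as the dual graph of some combinatorial type of cut-system on $\Sigma_{g',k}$ with genus $l(v)$ pieces; the grading axiom forces $g' = b_1(\Gamma) + \sum_v l(v)$, and $l(v)\le g$ lands the system in $\Cut^g(\Sigma_{g',k})$. A non-invertible active contraction $(\Gamma,l)\actto (\Gamma',l')$ corresponds to passing from a cut-system to a sub-cut-system, so the active slice becomes the orbit category of $\Diff(\Sigma_{g',k})\curvearrowright \Cut^g(\Sigma_{g',k})$. Computing each stratum via the fibration $\Diff(\Sigma_{g',k},S) \to \Diff(\Sigma_{g',k}) \to \Diff(\Sigma_{g',k})\cdot S$ and assembling strata then produces $|\Cut^g(\Sigma_{g',k})|\sslash \Diff(\Sigma_{g',k})$. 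The fibre sequence follows, since the induced map to $\Mcal(\frc_k^{(g')}) = B\Diff(\Sigma_{g',k})$ is precisely ``forget $S$'', whose homotopy fibre is $|\Cut^g(\Sigma_{g',k})|$.

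The main obstacle will be making the combinatorial-to-geometric identification in step two homotopy coherent while respecting the fact that $\Cut^g$ only allows \emph{essential} curves. On the combinatorial side essentiality excludes disk-vertices ($l=0$, valence $1$) and cylinder-vertices ($l=0$, valence $2$), whose contributions to the active slice instead arise from the (quasi-)units of $\Mcal$. I expect to deal with this by a cofinality argument showing that the sub-slice of graphs with no such unit-type vertices is initial in the full active slice, in the spirit of the quasi-unital comparison of \cref{thm:non-unital}, so that after this reduction the combinatorial colimit matches exactly with the Borel construction on $\Cut^g(\Sigma_{g',k})$.
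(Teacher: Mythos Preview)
Your proposal is correct in outline and arrives at the same answer, but the paper takes a shorter route that you might prefer. The paper also starts from the pointwise left Kan extension formula over the slice $\gGr^{\le g}_{/\frc_k^{(g')}}$, but instead of invoking the Segal condition to decompose $\Mcal(\Gamma)$ into a fibre product of $B\Diff(\Sigma_{l(v),k_v})$'s, it uses the \emph{geometric} model of $\Mcal(\Gamma)$ directly: by construction $\Mcal(\Gamma)$ is the groupoid of triples $(W,S,\alpha)$ with $W$ a surface, $S\subset W$ a cut system, and $\alpha\colon \Delta(W,S)\cong\Gamma$. Since the dual graph is prescribed, $W$ is forced to be $\Sigma_{g',k}$, and one immediately has
\[
\Mcal(\Gamma)\;\simeq\;\{(S,\alpha)\mid S\subset\Sigma_{g',k},\ \alpha\colon\Delta(S)\cong\Gamma\}\sslash\Diff(\Sigma_{g',k}).
\]
Because the group acting is the \emph{same} $\Diff(\Sigma_{g',k})$ for every $\Gamma$ in the slice, the homotopy orbits commute past the colimit, and the remaining colimit of $\{(S,\alpha)\mid\Delta(S)\cong\Gamma\}$ over graded graphs with labels $\le g$ is essentially by inspection the realisation $|\Cut^g(\Sigma_{g',k})|$. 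This sidesteps your reassembly step entirely: you never have to reconstruct the global $\Diff(\Sigma_{g',k})$-action from local stabilisers $\Diff(\Sigma_{g',k},S)$, because the geometric model already presents each term as orbits of the global group.

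On the essentiality issue you are right that a cofinality argument is needed, and the paper says exactly this (``we need to get rid of non-essential spheres, but those do not change the homotopy type by a cofinality argument''). Your diagnosis of which vertices are at stake is slightly off, though: the problematic vertices are only those coming from null-homotopic curves (disk pieces), not cylinder vertices in general---two disjoint isotopic essential curves bounding an annulus give a perfectly legitimate element of $\Cut^g$, and the corresponding unit-type morphism in $\gGr$ simply realises the poset relation of removing one of them.
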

\begin{proof}[Proof idea]
    We can compute $\Mcal^{(g)}$ as a left Kan extension along $i_g^{g'}$.
    The pointwise formula for left Kan extension gives
    \[
        \Mcal^{(g)}(\frc_k^{(g')}) 
        = \colim_{\Gamma \in \gGr^{\le g}_{/\frc_k^{(g')}}} \Mcal(\Gamma).
    \]
    Here the colimit runs over the category of maps $\Gamma \to \frc_k^{(g')}$ in $\gGr$ where $\Gamma \in \gGr^{\le g}$.
    By definition $\Mcal(\Gamma)$ is the groupoid of triples $(W,S,\alpha)$ where $W$ is a surface, $S \subset W$ is a cut system, and $\alpha\colon \Delta_{S\subset W} \cong \Gamma$ is a (grading preserving) graph isomorphism.
    We know from the dual graph that $W$ must be diffeomorphic to $\Sigma_{g',k}$, so we can write
    \[
        \Mcal(\Gamma) \simeq \{(S,\alpha) \;|\; S \subset \Sigma_{g,k},\, \alpha\colon \Delta_{S \subset W} \cong \Gamma\} \sslash \Diff(\Sigma_{g,k}).
    \]
    Inserting this in the colimit and doing some rewriting we get the desired result.
    (At some point we need to get rid of non-essential spheres, but those do not change the homotopy type of $\Cut^g(\Sigma_{g',k})$ by a cofinality argument.)
\end{proof}

For example, if $g' \le g$, then the empty cut system is allowed in $\Cut^g(\Sigma_{g',k})$, and it is an initial element in this poset.
Therefore, the realization of the poset is contractible, and we get that
    \[
        \Mcal^{(g)}(\frc_k^{(g')}) \simeq |\Cut^g(\Sigma_{g,k})| \sslash \Diff(\Sigma_{g,k})
        \simeq * \sslash \Diff(\Sigma_{g,k})
        \simeq \Mcal(\frc_k^{(g')})
    \]
as expected, since $\Ind_g^\infty$ is fully faithful and does not change the value on graphs that are bounded by $g$.

\subsubsection{The curve complex.}
The first interesting case is that of $g' = g+1$.
For simplicity, let us assume that $k=0$ and $g \ge 2$.
Then every \emph{non-empty} system of essential curves is allowed in $\Cut^g(\Sigma_{g+1})$, and thus we can identify this topological poset with the poset of simplices of the curve complex:
\[
    |\Cut^g(\Sigma_{g+1})| \simeq C(\Sigma)
\]
The \hldef{curve complex} $C(\Sigma)$ is the (discrete) simplicial complex where vertices are isotopy classes essential curves in $\Sigma$ and where $n+1$ vertices form an $n$-simplex if the essential curves can be made disjoint.

\begin{thm}[Harer, Ivanov, \cite{Harer1986,Ivanov1987}]\label{thm:curve complex}
    For $g \ge 2$, the curve complex $C(\Sigma_g)$ is equivalent to an infinite wedge of spheres of dimension $2g-2$.
    Moreover, $H_{2g-2}(C(\Sigma_g))$ is a virtual dualizing complex of dimension $4g-4$ for the mapping class group $\Gamma_g \coloneq \pi_0\Diff(\Sigma_g)$, meaning in particular that there are isomorphisms in rational homology
    \[
        H_k\left( C(\Sigma_g) \sslash \Gamma_g; \Qbb \right) 
        \cong H^{6g-6-k}(B\Gamma_g; \Qbb)
    \]
\end{thm}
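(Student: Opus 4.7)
The plan is to prove the two assertions separately: the wedge-of-spheres statement via Harer's inductive arc-complex argument, and the duality statement via a Borel--Serre-style bordification of Teichm\"uller space.

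\emph{The wedge of spheres.} I would follow Harer's original approach via arc complexes. First, I would establish the auxiliary fact that the \emph{arc complex} $\mathcal{A}(\Sigma_{g,1})$ (with $k$-simplices given by systems of $k+1$ disjoint isotopy classes of essential arcs at a fixed marked point) is contractible. The cleanest argument here is Hatcher's bigon-surgery flow: for any finite subcomplex $K$ and fixed arc $\alpha$, one iteratively performs bigon surgeries along $\alpha$ to reduce intersection numbers, producing a homotopy from $K$ into the star $\mathrm{St}(\alpha)$, which is a cone. Second, I would analyze the combined arc-and-curve complex and extract connectivity bounds for $C(\Sigma_g)$ by inducting on $g$: the link of a curve simplex in the arc-and-curve complex decomposes as a join of curve complexes of surfaces of smaller complexity, and the contractibility of the arc part converts this into a connectivity bound for $C(\Sigma_g)$ itself. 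This produces $(2g-3)$-connectedness; combined with the dimension bound $3g-4$ (coming from pants decompositions) and Hurewicz, one pins down the homotopy type as a wedge of $(2g-2)$-spheres.

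\emph{The duality statement.} Here my plan is to invoke the Borel--Serre-style bordification of Teichm\"uller space due to Harvey and Ivanov. Teichm\"uller space $\mathcal{T}_g$ is a contractible real $(6g-6)$-manifold on which $\Gamma_g$ acts properly with finite stabilizers, so rationally $B\Gamma_g \simeq \mathcal{T}_g/\Gamma_g$. The bordification $\overline{\mathcal{T}}_g$ is a contractible manifold with corners carrying a proper, cocompact $\Gamma_g$-action, whose boundary $\partial\overline{\mathcal{T}}_g$ is $\Gamma_g$-equivariantly equivalent (up to suspension) to $C(\Sigma_g)$. Rational Poincar\'e--Lefschetz duality on the orbifold quotient (finite stabilizers are invisible over $\Qbb$) expresses $H^{j}(B\Gamma_g;\Qbb)$ as
\[
H^{j}(B\Gamma_g;\Qbb) \;\cong\; H_{6g-6-j}\bigl(\overline{\mathcal{T}}_g/\Gamma_g,\, \partial\overline{\mathcal{T}}_g/\Gamma_g;\,\Qbb\bigr),
\]
and the long exact sequence of the pair together with contractibility of the total space rewrites the right-hand side as the reduced homology of $C(\Sigma_g)\sslash\Gamma_g$, with the precise index shift yielding the stated formula.

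The hard part, without question, is the first statement: Hatcher's surgery-flow argument for contractibility of $\mathcal{A}$ is slick but delicate, and the inductive identification of links in the arc-and-curve complex requires tight combinatorial bookkeeping to land on exactly the spherical dimension $2g-2$ rather than something coarser. Given the Borel--Serre bordification as a black box, the duality statement reduces to a rational Poincar\'e--Lefschetz argument and is essentially formal.
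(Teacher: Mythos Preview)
The paper does not prove this theorem; it is stated with attribution to Harer and Ivanov and cited from \cite{Harer1986,Ivanov1987} as an input from the literature, then used as a black box in the subsequent corollaries. There is therefore no proof in the paper to compare your proposal against.

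That said, your sketch is a reasonable outline of how the original arguments go: Harer's connectivity bound for the curve complex does proceed via arc complexes and induction on complexity, and the virtual duality statement is established via the Harvey--Ivanov bordification of Teichm\"uller space together with Poincar\'e--Lefschetz duality. If you were to actually write this out, the main issues to watch are (i) making the inductive step from arc-complex contractibility to curve-complex connectivity precise---the passage through the arc-and-curve complex and the link-join identifications require care to land exactly on $2g-2$---and (ii) the identification of $\partial\overline{\mathcal{T}}_g$ with (a thickening of) $C(\Sigma_g)$, which is where the curve complex enters the duality story. But for the purposes of this paper, the theorem is simply quoted.
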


\subsubsection{Convergence.}
Using \cref{thm:curve complex} one can show, with a bit more work, that the map from the genus $g$ approximation of $\Mcal$ to $\Mcal$ is highly connected
\begin{cor}
    The map $\Mcal^{(g)}(\Gamma) \to \Mcal(\Gamma)$ is $(2g-1)$-connected.
\end{cor}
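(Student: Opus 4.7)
The plan is to combine the fibre sequence of \cref{lem:Cutg-as-fiber} with Harer--Ivanov's connectivity of the curve complex (\cref{thm:curve complex}). Since both $\Mcal$ and $\Mcal^{(g)}$ satisfy the Segal condition and agree on $\fre$ (both giving $B\SO(2)$, as the fully faithful $\Ind_g^\infty$ does not alter values on graphs of genus $\le g$), the map $\Mcal^{(g)}(\Gamma) \to \Mcal(\Gamma)$ is obtained as an iterated pullback of the vertex-wise maps $\Mcal^{(g)}(\frc_v) \to \Mcal(\frc_v)$ over the common base $\Mcal(\fre)^{\times A(\Gamma)} = (B\SO(2))^{\times A(\Gamma)}$. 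Because $n$-connected maps of anima are closed under products and pullbacks, it suffices to prove the bound for corollas $\Gamma = \frc_k^{(g')}$, which by \cref{lem:Cutg-as-fiber} reduces to showing that $|\Cut^g(\Sigma_{g',k})|$ is $(2g-1)$-connected for all $g', k \ge 0$.

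Two of the cases are easy. When $g' \le g$, the empty cut system lies in $\Cut^g(\Sigma_{g',k})$ and is an initial object of the poset, so the realisation is contractible. The first non-trivial case is $g' = g+1$: because cutting a connected surface along any essential simple closed curve strictly decreases the total genus of the resulting components, every non-empty system of disjoint essential curves in $\Sigma_{g+1,k}$ automatically satisfies the genus bound. Hence $\Cut^g(\Sigma_{g+1,k}) \setminus \{\emptyset\}$ coincides with the poset of simplices of the curve complex $C(\Sigma_{g+1,k})$, which by \cref{thm:curve complex} (in its version for surfaces with boundary) is a wedge of $2g$-spheres, i.e.\ $(2g-1)$-connected.

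For $g' > g+1$ the approach is induction on $g'$. The link of a vertex $[\gamma]$ in $|\Cut^g(\Sigma_{g',k})|$ is the realisation of cut systems strictly enlarging $\{\gamma\}$, which is identified with $|\Cut^g(\Sigma_{g',k} \setminus \gamma)|$ of the surface cut open along $\gamma$. Since the cut-poset of a disjoint union factors as a product, and each component of $\Sigma_{g',k} \setminus \gamma$ has strictly smaller genus, the inductive hypothesis gives that every vertex link is $(2g-1)$-connected. The main obstacle is converting this local information into the global connectivity of $|\Cut^g(\Sigma_{g',k})|$. The cleanest route is a ``bad simplex'' argument: view $|\Cut^g(\Sigma_{g',k})|$ as a subcomplex of $C(\Sigma_{g',k})$, whose connectivity $(2g' - 3) \ge (2g-1)$ is supplied by \cref{thm:curve complex}, and use the inductive connectivity of the links of the discarded (``bad'') simplices --- those systems leaving a component of genus $>g$ --- to argue via a standard deletion / discrete Morse-theoretic argument that the inclusion $|\Cut^g(\Sigma_{g',k})| \hookrightarrow C(\Sigma_{g',k})$ does not drop connectivity below $2g-1$. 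Verifying this combinatorial deletion step cleanly, and making sure the product-of-links induction bookkeeping is uniform in $k$, is the main technical hurdle of the proof.
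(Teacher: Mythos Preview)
Your reduction to corollas via the Segal condition is fine, and the cases $g' \le g$ and $g' = g+1$ match the paper. The difference is in how you handle $g' > g+1$. You attempt a direct induction on $g'$, showing $|\Cut^g(\Sigma_{g',k})|$ is $(2g-1)$-connected via a bad-simplex argument inside the curve complex. The paper instead avoids this entirely by a filtration trick: it suffices to show that each map $\Mcal^{(g)} \to \Mcal^{(g+1)}$ is $(2g-1)$-connected, since the map to $\Mcal$ is a transfinite composite of such maps with increasing connectivity. Now both $\Mcal^{(g)}$ and $\Mcal^{(g+1)}$ are left Kan extended from $\gGr^{\le g+1}$, and $(2g-1)$-connected maps are stable under colimits in $\Ar(\An)$, so it is enough to check the map on graphs in $\gGr^{\le g+1}$. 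There the Segal reduction leaves only the single case $g' = g+1$, which is the curve complex. No induction, no bad simplices.

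Your approach is not wrong in spirit, but the sketch has a real gap: $\Cut^g(\Sigma_{g',k})$ is \emph{not} a subcomplex of $C(\Sigma_{g',k})$ --- removing curves from a cut system can raise the genus of a complementary component, so the condition is upward-closed in the poset of simplices rather than downward-closed. The usual bad-simplex machinery (Hatcher--Wahl style) is formulated for downward-closed subcomplexes, so you would need a dual version or a different combinatorial argument, and this is precisely the ``main technical hurdle'' you flag but do not resolve. The paper's categorical reduction buys you exactly this: it replaces a delicate connectivity statement about $\Cut^g$ for arbitrary $g'$ by the single known case $g' = g+1$, at the cost of the abstract input that left Kan extension preserves connectivity.
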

\begin{proof}[Proof idea]
    It will suffice to show this claim for the map $\Mcal^{(g)}(\Gamma) \to \Mcal^{(g+1)}(\Gamma)$, as the map to $\Mcal(\Gamma)$ is the transfinite composite of such maps and the connectivity only increases.
    Moreover, it will suffice to show this when $\Gamma \in \gGr^{\le g+1}$, as both $\Mcal^{(g)}$ and $\Mcal^{(g+1)}$ are left Kan extended from this category and $(2g-1)$-connected maps are stable under colimits in the arrow category $\Ar(\Scal)$.
    As both modular operads have the same space of objects, the Segal condition reduces the claim to the value at $\frc_k^{(g')}$ for $0 \le g' \le g+1$.
    But the modular operads also agree in genus $<g+1$, so we really just need to show that the map 
    \[
        \Mcal^{(g)}(\frc_k^{(g+1)}) \too \Mcal^{(g+1)}(\frc_k^{(g+1)}) = \Mcal(\frc_k^{(g+1)}) = B\Diff(\Sigma_{g+1,k})
    \]
    has $(2g-1)$-connected fibers.
    When $k=0$ this follows from lemma \cref{lem:Cutg-as-fiber} since the curve complex $C(\Sigma_{g+1})$ is equivalent to $\bigvee_\infty S^{2g}$ by Harrer and Ivanov's theorem (\cref{thm:curve complex}), which is indeed $(2g-1)$-connected.
    In the case of $k>0$ we don't quite get the curve complex, but the fiber is still $(2g-1)$-connected, though we will not show this here.
\end{proof}

\subsubsection{Modular algebras in \texorpdfstring{$(n,1)$}{(n,1)}-categories.}
We can summarize this by saying that
\[
    \text{``the genus filtration converges with slope $2$.''}
\]
A concrete consequence of this is that when considering modular $\Mcal$-algebras in an $(n,1)$-category it suffices to consider operations of genus $\le n/2$.
Recall that a symmetric monoidal $(n,1)$-category is a symmetric monoidal \category{} $\Ccal \in \SM$ such that for any two objects $x, y \in \Ccal$ and the mapping space $\Map_\Ccal(x,y)$ is $(n-1)$-truncated, meaning that $\pi_k(\Map_\Ccal(x,y), f) = 0$ for all $k>n-1$ and $f\colon x \to y$.
\begin{cor}\label{cor:truncated-algebras}
    If $\Ccal$ is a symmetric monoidal $(n,1)$-category and $2g \ge n$, then the restriction map
    \[
        \Alg_\Mcal(\Ccal) \too \Alg_{\Mcal^{(g)}}(\Ccal)
    \]
    is an equivalence.
\end{cor}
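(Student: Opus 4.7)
The plan is to reduce the statement to a routine truncation--connectivity argument applied pointwise over $\Gr$. Since $\ModOp$ is a full subcategory of $\Fun(\Gr, \An)$, the mapping space $\Map_{\ModOp}(\Ocal, \Pcal)$ is computed as the end over $\Gr$ of mapping spaces in $\An$. Hence if $f\colon \Ocal \to \Ocal'$ is pointwise $c$-connected and $\Pcal$ takes values in $c$-truncated spaces, then the induced map $\Map_{\ModOp}(\Ocal', \Pcal) \to \Map_{\ModOp}(\Ocal, \Pcal)$ is an equivalence, because each factor in the end is and limits preserve equivalences.

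The first step is to check that for a symmetric monoidal $(n,1)$-category $\Ccal$, the functor $\Udual(\Ccal)\colon \Gr \to \An$ takes values in $n$-truncated spaces. On the edge, $\Udual(\Ccal)(\fre) = (\Ccal^\dual)^\simeq$ is a subspace of $\Ccal^\simeq$, which is $n$-truncated because $\Map_\Ccal(x,y)$ is $(n-1)$-truncated for all $x,y$ by hypothesis. On a corolla $\frc_k$, by the description in \cref{ex:Udual}, the space $\Udual(\Ccal)(\frc_k)$ fibers over $(\Ccal^\dual)^{\simeq\times k}$ with fibre the $(n-1)$-truncated mapping space $\Map_\Ccal(x_1 \otimes \cdots \otimes x_k, \unit)$, and the long exact sequence of homotopy groups shows it is $n$-truncated as well. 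For a general graph, the Segal condition expresses $\Udual(\Ccal)(\Gamma)$ as a finite limit of these elementary values, and $n$-truncated spaces are closed under finite limits.

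The second step is to apply the preceding corollary, which provides that the map $\Mcal^{(g)}(\Gamma) \to \Mcal(\Gamma)$ has $(2g-1)$-connected homotopy fibres for every $\Gamma$. The hypothesis $2g \ge n$ matches this fibre-connectivity with the truncation level of $\Udual(\Ccal)$ exactly: the standard fact in $\An$ says that mapping from a map with $(2g-1)$-connected fibre into an $n$-truncated target is an equivalence whenever $2g \ge n$. Taking the end over $\Gr$ then yields the desired equivalence
\[
    \Alg_\Mcal(\Ccal) = \Map_{\ModOp}(\Mcal, \Udual(\Ccal)) \too \Map_{\ModOp}(\Mcal^{(g)}, \Udual(\Ccal)) = \Alg_{\Mcal^{(g)}}(\Ccal).
\]

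The substantive content lies entirely in the truncation estimate for $\Udual(\Ccal)$; beyond that, the proof is formal. The main obstacle, if any, is careful bookkeeping of indexing conventions so that the inequality $2g \ge n$ comes out exactly as stated, but there is no deeper technical difficulty.
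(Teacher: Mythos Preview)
Your approach is the same as the paper's, but the last step contains a genuine off-by-one error that is not merely bookkeeping. With fibres that are $(2g-1)$-connected and a target that is $n$-truncated, the standard orthogonality in $\An$ only gives an equivalence when $n \le 2g-1$, i.e.\ $2g \ge n+1$. (Test against $S^{2g} \to *$ with $Z = K(A,2g)$: the fibre $S^{2g}$ is $(2g-1)$-connected, $Z$ is $2g$-truncated, but $\Map(S^{2g},Z)$ has an extra factor of $A$.) So your argument, as written, only proves the corollary under the stronger hypothesis $2g \ge n+1$.

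To recover the sharp bound $2g \ge n$ you need the additional observation the paper makes explicit: the map $\Mcal^{(g)}(\fre) \to \Mcal(\fre)$ is an \emph{equivalence}, so the space of colour assignments is unchanged. The value $\Udual(\Ccal)(\fre) = (\Ccal^{\dual})^\simeq$ is the only place where $\Udual(\Ccal)$ is $n$-truncated rather than $(n-1)$-truncated, and this contribution is already matched. One then argues fibrewise over the space of colour assignments, where the remaining data lives in the multi-mapping spaces $\Map_\Ccal(x_1 \otimes \cdots \otimes x_k, \unit)$, which are genuinely $(n-1)$-truncated; now $n-1 \le 2g-1$ gives exactly $2g \ge n$. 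Your write-up records that $\Udual(\Ccal)(\fre)$ is only $n$-truncated but then does not use this distinction, and that is precisely the missing step.
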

\begin{proof}
    By assumption on $\Ccal$ the spaces of operations
    \[
        \Udual(\Ccal)(x_1,\dots, x_n) \simeq \Map_\Ccal(x_1 \otimes \dots \otimes x_n, \unit)
    \]
    are $(n-1)$-truncated.
    For the purpose of this proof we will pretend that this means that, via the Segal condition, $\Udual(\Ccal)(\Gamma)$ is $(n-1)$-truncated for all $\Gamma \in \Gr$.
    This is not quite true as $\Udual(\Ccal)(\fre) = (\Ccal^\dbl)^\simeq$ is an $n$-groupoid, but we can effectively ignore the space of objects (or change it using a DK-equivalence) as $\Mcal^{(g)}(\fre) \to \Mcal(\fre)$ is an equivalence.
    Up to this caveat the claim now follows because $\Mcal^{(g)} \to \Mcal$ is a pointwise $(2g-1)$-connected map in $\Fun(\Gr, \An)$ and $\Udual(\Ccal)$  is (basically) a $(n-1)$-truncated object in this category.
    Thus, as long as $2g-1 \ge n-1$ every map $\Mcal^{(g)} \to \Udual(\Ccal)$ uniquely extends to $\Mcal$.
\end{proof}

\begin{example}
    If $\Ccal = \LinCat_k$ is the $(2,1)$-category of $k$-linear categories (with the caveats from \cref{rem:LinCat-caveat}), then for symmetric monoidal functors $\Bord_2 \to \LinCat_k$ we get an equivalence
    \[
        \Fun^\otimes(\Bord_2, \LinCat_k) = \Alg_\Mcal(\LinCat_k) \xtoo{\simeq} \Alg_{\Mcal^{(1)}}(\LinCat_k).
    \]
    In other words, a modular functor is uniquely determined by what it does in genus $\le 1$,
    and to define a modular functor we only need to give a map of genus $\le 1$ restricted modular operads.
\end{example}

\begin{rem}
    For $n=1$ \cref{cor:truncated-algebras} only tells us that for every symmetric monoidal $1$-category $\Ccal$ the restriction
    \[
        \Fun^\otimes(\Bord_2, \Ccal) = \Alg_\Mcal(\Ccal) \xtoo{\simeq} \Alg_{\Mcal^{(1)}}(\Ccal)
    \]
    is an equivalence, but we know from \cref{thm:folk-2D-TQFT} that in fact even the restriction to genus $0$ uniquely determines a $2$D TFT.
    The issue here is that the map of modular operads
    \[
        \Hcal = \Mcal^{(0)} \too \Mcal
    \]
    is a bijection on $\pi_0$, but it is only $(-1)$-connected and not $0$-connected.
    For it to be $0$-connected it would have to induce a surjection on $\pi_1$, but in fact the mapping class groups of handlebodies \emph{inject} into the mapping class groups of surfaces.
    Presumably, this indicates that if in \cref{cor:truncated-algebras} we were to work with the notion of ``becomes an equivalence after $(n-1)$-truncation'' rather than the notion of ``$(n-1)$-connected map'', then we might obtain a slightly better bound.
\end{rem}

\subsection{Extending from genus \texorpdfstring{$g$ to $g+1$}{g to g+1}}

In the previous section we set up a convergent filtration of $\Mcal$ so that when studying $\Mcal$-algebras in $\Ccal$ we get a convergent tower
\[
    \Fun^\otimes(\Bord_2, \Ccal) = \Algmod_\Mcal(\Ccal) \too \dots 
    \too \Algmod_{\Mcal^{(2)}}(\Ccal) \too \Algmod_{\Mcal^{(1)}}(\Ccal) \too \Algmod_{\Mcal^{(0)}}(\Ccal) .
\]
Our goal now will be to understand how to step up this tower one genus at a time.
For example, we observed that (underived) modular functors are uniquely determined in genus $\le 1$, and as we already understand cyclic algebras it suffices to study how to extend from genus $0$ to genus $1$.
The key idea will be to think of the value of $\Mcal$ in genus $1$ as a right module over the operad that we obtain by restricting $\Mcal$ to genus $0$.
As before, it will be easier to first study this problem one (or in fact two) category levels up, that is, to study the restriction functor 
\[
    (i_{g-1}^g)^\ast\colon \grModOp^{\le g} \to \grModOp^{g-1}.
\]

\subsubsection{Genus \texorpdfstring{$g$}{g} is a right module over genus \texorpdfstring{$0$}{0}.}
For a (genus restricted) modular operad $\Ocal$, let $\Ocal_0 \coloneq \ul{\Ocal}_{|g\le 0}$ denote the genus $0$ part of $\Ocal$, thought of as an operad, rather than a cyclic operad.
Then there is a functor
\begin{align*}
    \grModOp^{\le g} &\too \RMod \\
    \Ocal & \longmapsto (\Ocal_0, \Ocal(\frc_\bullet^{(g)}))
\end{align*}
that records all the genus $g$ operations of the modular operad as a right module over the genus $0$ operations.
To construct this functor we can use that $\Ocal$ forgets to a cyclic operad, which then induces a right module over itself (via \cref{prop:Seg-rMod}) as $(\ul{\Ocal}, \omega_\Ocal) \in \RMod^\dual$.
Then we restrict to the genus $0$ part of $\ul{\Ocal}$ and the genus $g$ part of $\omega_\Ocal$.
Of course the resulting right module $\Ocal(\frc_\bullet^{(k)})$ is no longer a dualizing module.
\begin{example}
    In the case of $\Mcal$ we have $\Mcal_0 = \EtwoSO$, and we can describe the right module $\Mcal(\frc_\bullet^{(g)})$ as the disk presheaf
    \[
        \left(  \Mcal(\frc_\bullet^{(g)}) \colon \sqcup_k D^2 \longmapsto \Emb(\sqcup_k D^2, \Sigma_g) \sslash \Diff(\Sigma_g) \right) \in \PSh(\Disk_2)
    \]
    or more concisely as $E_{\Sigma_g} \sslash \Diff(\Sigma_g)$, using the notation from \cref{ex:E_M}.
    Note that in arity $k$ this indeed gives%
    \footnote{
        To see this, consider the Palais fiber sequence
        \[
            \Diff_{\partial}(\Sigma_{g,k}) \simeq \Diff_{\sqcup_k D^2}(\Sigma_k) 
            \too \Diff(\Sigma_k) \too \Emb(\sqcup_k D^2, \Sigma_k)
        \]
        and deloop it / apply the homotopical orbit-stabiliser lemma \cite[Lemma 2.11]{BBS-finiteness}.
    }
    \[
        \Emb(\sqcup_k D^2, \Sigma_g)\sslash \Diff(\Sigma_g) \simeq B\Diff_\partial(\Sigma_{g,k}).
    \]
\end{example}

Concatenating this with the induction functor $\Ind_{g-1}^g$ we obtain the following functor.
\begin{defn}
    For $g\ge 1$ the \hldef{$g$ latching module} of a genus $\le (g-1)$ restricted modular operad $\Ocal \in \grModOp^{\le g-1}$ is defined as
    \[
        \hldef{L_g\Ocal} \coloneq (\Ind_{g-1}^g\Ocal)(\frc_\bullet^{(g)}) \in \RMod_{\Ocal_0}
    \]
    which assembles into a functor
    \begin{align*}
        \grModOp^{\le g-1} &\too \RMod \\
        \Ocal & \longmapsto (\Ocal_0, L_g\Ocal).
    \end{align*}
\end{defn}

\begin{example}
    In the case of $\Mcal$ we compute the $g$ latching module $L_g\Mcal$ by using \cref{lem:Cutg-as-fiber} and manipulating colimits.
    (We only do this rewriting informally, a more formal argument would need to be more careful about the right module structure.)
    \begin{align*}
        L_g\Mcal(\sqcup_k D^2) 
        & \simeq |\Cut^{g-1}(\Sigma_{g,k})| \sslash \Diff_\partial(\Sigma_{g,k}) \\
        & \simeq \left( \colim_{i\colon \sqcup_k D^2 \hookrightarrow \Sigma_g} |\Cut^{g-1}(\Sigma_g\setminus i(\sqcup_k D^2))| \right) \sslash \Diff(\Sigma_{g}) \\
        & \simeq \left( \colim_{S \in \Cut^{g-1}(\Sigma_g)^\op} \Emb(\sqcup_k D^2, \Sigma_g\setminus S)\right) \sslash \Diff(\Sigma_{g}) 
    \end{align*}
    So we can write the latching module as 
    \[
        L_g\Mcal \simeq  \left( \colim_{S \in \Cut^{g-1}(\Sigma_g)^\op} E_{\Sigma_g \setminus S}\right) \sslash \Diff(\Sigma_{g}) .
    \]
\end{example}

\subsubsection{Main theorem.}
We now have all the tools at hand to describe the fiber of the restriction functor.
\begin{thm}[\cite{genus}]\label{thm:genus-extension}
    For every genus $\le g-1$ restricted modular operad $\Ocal$ there is an equivalence
    \[
        \fib_\Ocal\left( \grModOp^{\le g} \too \grModOp^{\le g-1} \right)
        \simeq (\RMod_{\Ocal_0})_{L_g/}
    \]
    induced by the functors described above.
\end{thm}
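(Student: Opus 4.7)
My plan is to classify the extra data needed to extend a genus-$\le g-1$ restricted modular operad $\Ocal$ over $i_{g-1}^g$, by combining the fully-faithfulness of $\Ind_{g-1}^g$ with the Segal condition on $\gGr^{\le g}$.

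First, since $\Ind_{g-1}^g$ is fully faithful, the unit $\Ocal \xrightarrow{\sim} (i_{g-1}^g)^* \Ind_{g-1}^g \Ocal$ is an equivalence, so $\fib_\Ocal$ identifies with the full subcategory of $(\grModOp^{\le g})_{\Ind_{g-1}^g \Ocal /}$ spanned by those maps $\varepsilon \colon \Ind_{g-1}^g\Ocal \to \Pcal$ that restrict to an equivalence on $\gGr^{\le g-1}$. By the Segal condition, such a $\Pcal$ is determined up to equivalence by its values on the elementaries of $\gGr^{\le g}$ together with the active functoriality between them. Values on elementaries of genus $\le g-1$ are fixed via $\varepsilon$, so all the new data is encoded by $M_\bullet \coloneq \Pcal(\frc_\bullet^{(g)})$ together with active morphisms into these new elementaries.

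Next I would analyze those new active morphisms. Any active map $\Gamma \actto \frc_k^{(g)}$ in $\gGr^{\le g}$ has $\Gamma$ of total genus $g$; balancing vertex genera against Betti number forces $\Gamma$ into one of two types: \emph{(A)} $\Gamma$ has exactly one genus-$g$ vertex and is a tree with all remaining vertices of genus $0$, or \emph{(B)} every vertex of $\Gamma$ has genus $<g$, the missing genus being made up by loops and/or intermediate-genus vertices. Type-(A) maps equip $M_\bullet$ with a right $\Ocal_0$-module structure, since the leaves are precisely the inputs on which $\Env(\Ocal_0)$ acts on the central $\frc_k^{(g)}$. Type-(B) maps assemble into a single map of right $\Ocal_0$-modules $\phi\colon L_g\Ocal \to M_\bullet$: by definition $L_g\Ocal = (\Ind_{g-1}^g\Ocal)(\frc_\bullet^{(g)})$ collects exactly the operations freely built out of type-(B) graphs, and $\phi$ is the evaluation of $\varepsilon$ at $\frc_\bullet^{(g)}$. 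Compatibility between these two structures is automatic from the modular operad structure already carried by $\Ind_{g-1}^g \Ocal$. This produces the forward functor $\fib_\Ocal \to (\RMod_{\Ocal_0})_{L_g\Ocal /}$.

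For the reverse direction, given $(M,\phi)$ I would build $\Pcal$ by a left Kan extension: extend $\Ind_{g-1}^g \Ocal$ along the inclusion of the subpattern of $\gGr^{\le g}$ generated by $\gGr^{\le g-1}$ together with the formal insertion of $M$ at the new elementaries, prescribing the type-(A) active structure via the $\Ocal_0$-action on $M$ and the type-(B) active structure via $\phi$. Equivalently this may be packaged as a pushout in $(\grModOp^{\le g})_{\Ind_{g-1}^g\Ocal/}$ of the form
\[
\Pcal \;\simeq\; \Ind_{g-1}^g\Ocal \;\sqcup_{F(L_g\Ocal)}\; F(M),
\]
where $F$ denotes the left adjoint to the functor $\Pcal \mapsto \Pcal(\frc_\bullet^{(g)})$ from the slice under $\Ind_{g-1}^g\Ocal$ to $\RMod_{\Ocal_0}$. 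The main obstacle is verifying that this construction satisfies the Segal condition on $\gGr^{\le g}$ and that both round-trip composites are equivalences. I expect this to reduce to a sound-extendability property of the pattern $\gGr^{\le g}$ in the sense of \cite[\S8]{patterns1}: for each active map into an elementary of $\gGr^{\le g}$, the category of factorizations through a type-(A) composed with a type-(B) piece should be weakly contractible. This is a concrete combinatorial claim about genus-$g$ graphs, and should be tractable by a direct analysis of how type-(A) and type-(B) pieces of a graph can be separated, analogous in spirit to the contractibility-of-directed-subdivisions argument in the proof of \cref{thm:rigid properads}.
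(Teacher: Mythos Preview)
The paper does not prove this theorem: it is stated as a result from the author's work in progress \cite{genus}, with no proof or proof sketch given in these lecture notes. There is therefore nothing in the paper to compare your proposal against directly.

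Your outline is a plausible shape for such an argument. The type-(A)/type-(B) dichotomy for actives $\Gamma \actto \frc_k^{(g)}$ is correct (a single genus-$g$ vertex forces the rest of the graph to be a tree with all other vertices of genus $0$), and your identification of type-(A) with the right $\Ocal_0$-module action and type-(B) with the latching map is the right intuition. The honest gap is exactly where you flag it: turning this into an equivalence requires a careful contractibility argument for categories of factorizations, and packaging the reverse construction as a pushout along a left adjoint $F$ presupposes that such an $F$ exists and interacts well with Segal objects, which is itself nontrivial. One point to watch is that the type-(A) maps do not by themselves hand you a coherent right $\Ocal_0$-module structure on $M_\bullet$---you need to know that the sub-pattern of $\gGr^{\le g}$ generated by trees with a single genus-$g$ vertex and remaining vertices of genus $0$ is Morita equivalent (in the pattern sense) to $\dTree^{\le 1\text{-}out}$, so that \cref{prop:Seg-rMod} applies. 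These are presumably the details that occupy \cite{genus}, but nothing in the present paper allows one to check your sketch against the intended argument.
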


The restriction functor
    $\grModOp^{\le g} \too \grModOp^{\le g-1}$
is a cocartesian fibration and the above theorem describes its fibers one at a time.
A slightly more sophisticated version of the theorem describes the entire fibration by showing that it fits into a cartesian square
\[\begin{tikzcd}[column sep = 45]
    \grModOp^{\le g} \ar[d] \ar[r, "{L_g \to \Ocal(\frc_\bullet^{(g)})}"] \ar[dr, phantom, very near start, "\lrcorner"] &
    \Ar^{\rm vert}(\RMod) \ar[d, "s"] \\
    \grModOp^{\le g-1} \ar[r, "L_g"] & \RMod
\end{tikzcd}\]
where $\Ar^{\rm vert}(\RMod) = \Op \times_{\Ar(\Op)} \Ar(\RMod)$ is the vertical arrow category, i.e.~it is the full subcategory of $\Ar(\RMod)$ on those morphisms that map to equivalences in $\Op$.

Let's apply this to modular $\Mcal$ algebras in a symmetric monoidal \category{} $\Vcal \in \SM$.
Fix a modular $\Mcal^{(g-1)}$-algebra $A \in \Algmod_{\Mcal^{(g-1)}}(\Vcal)$ in $\Vcal$.
Then the space of lifts of $A$ to a modular $\Mcal^{(g)}$ algebra is the space of extensions
\[
    {\left\{ \begin{tikzcd}
        L_g \Mcal \ar[dr, "A"] \ar[d] & \\
        \Mcal(\frc_\bullet^{(g)}) \ar[r, dashed] & \Udual(\Ccal)
    \end{tikzcd} 
    \in \RMod_{\Mcal_0}
    \right\}}
    \simeq 
    {\left\{ \begin{tikzcd}
        \colim\limits_{S \in \Cut^{g-1}(\Sigma_g)^\op} E_{\Sigma_g \setminus S} \ar[dr, "A"] \ar[d] & \\
        E_{\Sigma_g} \ar[r, dashed] & \Udual(\Ccal)
    \end{tikzcd} 
    \in \PSh(\Disk_2)^{B\Diff(\Sigma_g)}
    \right\}} 
\]
If $\Vcal$ has sufficient colimits and the tensor product preserves them, then we can further rewrite this as an extension problem in the \category{} of $\Diff(\Sigma_k)$-representations in $\Ccal$
where we equip $\unit_\Ccal$ with the trivial action.
\begin{cor}\label{cor:space-of-lifts}
    For $\Vcal$ a symmetric monoidal \category{} with sifted colimits, which are preserved by the tensor product, the space of lifts of a modular $\Mcal^{(g-1)}$-algebra $A$ to a modular $\Mcal^{(g)}$-algebra is 
    \[ 
        \fib_A
        {\left( \begin{tikzcd}
        \Alg_{\Mcal^{(g)}}(\Vcal) \ar[d] \\
        \Alg_{\Mcal^{(g-1)}}(\Vcal)
        \end{tikzcd} 
        \right)}
        \simeq 
        {\left\{ \begin{tikzcd}
            \colim\limits_{S \in \Cut^{g-1}(\Sigma_g)^\op} \int_{\Sigma_g \setminus S} A \ar[dr] \ar[d] & \\
            \int_{\Sigma_g} A \ar[r, dashed] & \unit_\Ccal
        \end{tikzcd} 
        \in \Ccal^{B\Diff(\Sigma_g)}
        \right\}}
    \]
\end{cor}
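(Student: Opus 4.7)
The plan is to derive the corollary by specialising Theorem \ref{thm:genus-extension} to algebras valued in $\Vcal$, and then translating the resulting extension problem from the world of right modules over $\EtwoSO$ into $\Vcal$-maps via the universal property of factorization homology from Example \ref{ex:factorisation-homology}, while carefully tracking the $\Diff(\Sigma_g)$-action.

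Concretely, I would first use the adjunctions $\Ind_g^\infty \dashv (i_g)^\ast$ (and its $(g-1)$-counterpart) to identify the fiber of the restriction $\Algmod_{\Mcal^{(g)}}(\Vcal) \to \Algmod_{\Mcal^{(g-1)}}(\Vcal)$ over $A$ with the fiber, over $A$, of the corresponding map of mapping spaces in $\grModOp^{\le g}$ and $\grModOp^{\le g-1}$ with target $U \coloneq (i_g)^\ast \Udual(\Vcal)$. Applying the cartesian-square refinement of Theorem \ref{thm:genus-extension} at both source and target then identifies this fiber with the space of dashed arrows in the right-module extension problem
\[
\begin{tikzcd}
L_g\Mcal \ar[r] \ar[d] & A^\ast\omega_\Vcal \\
\Mcal(\frc_\bullet^{(g)}) \ar[ur, dashed] &
\end{tikzcd}
\]
in $\RMod_{\EtwoSO}$, where $\omega_\Vcal(x_1\boxtimes\cdots\boxtimes x_k) = \Map_\Vcal(x_1\otimes\cdots\otimes x_k, \unit)$ and the top arrow is the image of $A$ under $L_g(-)$.

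Next, I substitute the explicit formulas computed immediately above the corollary, namely $\Mcal(\frc_\bullet^{(g)}) \simeq E_{\Sigma_g}\sslash \Diff(\Sigma_g)$ and $L_g\Mcal \simeq \bigl(\colim_{S\in \Cut^{g-1}(\Sigma_g)^\op} E_{\Sigma_g\setminus S}\bigr)\sslash \Diff(\Sigma_g)$. Using Example \ref{ex:factorisation-homology} (which identifies $\Map_{\PSh(\Disk_2)}(E_M, A^\ast\omega_\Vcal)$ with $\Map_\Vcal(\smallint_M A, \unit)$) together with the facts that mapping out of a homotopy quotient produces homotopy fixed points and that mapping out of a colimit of presheaves produces a limit of mapping spaces, the right-module extension problem translates, via the Yoneda lemma applied in $\Vcal^{B\Diff(\Sigma_g)}$, into precisely the extension problem against $\unit_\Vcal$ (with trivial $\Diff(\Sigma_g)$-action) stated in the corollary.

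The main obstacle will be coherently transporting the $\Diff(\Sigma_g)$-action through the above rewriting: one has to verify that the $\Diff(\Sigma_g)$-action on the presheaves $\Mcal(\frc_\bullet^{(g)})$ and $L_g\Mcal$, appearing as the quotient in their description, matches under the Yoneda equivalence the natural $\Diff(\Sigma_g)$-action on $\smallint_{\Sigma_g} A$ and $\colim_S \smallint_{\Sigma_g\setminus S} A$, and that the connecting arrow is equivariant. The hypothesis on $\Vcal$ (sifted colimits, preserved by $\otimes$) enters exactly to guarantee that $\smallint_{(-)} A$ is well-defined as a functor out of disk-presheaves and commutes past the colimit over $\Cut^{g-1}(\Sigma_g)^\op$; modulo this coherent bookkeeping, the chain of equivalences is then automatic.
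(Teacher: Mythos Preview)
Your proposal is correct and follows essentially the same route as the paper: the paper derives the corollary directly from Theorem~\ref{thm:genus-extension} by first rewriting the fiber as the right-module extension problem $L_g\Mcal \to \Mcal(\frc_\bullet^{(g)}) \dashrightarrow \Udual(\Vcal)$ in $\RMod_{\Mcal_0}$, then substituting the disk-presheaf descriptions $E_{\Sigma_g}\sslash\Diff(\Sigma_g)$ and $(\colim_S E_{\Sigma_g\setminus S})\sslash\Diff(\Sigma_g)$, and finally invoking the factorization-homology universal property (your Example~\ref{ex:factorisation-homology}) under the colimit hypothesis on $\Vcal$ to land in $\Vcal^{B\Diff(\Sigma_g)}$. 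Your extra care about the $\Diff(\Sigma_g)$-equivariance and the identification of the target module with $A^\ast\omega_\Vcal$ is a welcome elaboration of what the paper leaves implicit.
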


\begin{rem}
    While this might seem like quite a complicated expression, it is useful because it ``linearizes'' the problem.
    We started out with the task of defining a (modular) algebra and now this task has been split up into giving an algebra over the ordinary operad $\EtwoSO$ and a sequence of maps of right modules over $\EtwoSO$.
    It would be interesting to see whether one can use this to, for example, study the deformation theory of $2$D TFTs.
\end{rem}

\subsection{Field theoretic interpretation}

\subsubsection{Correlation functions.}
We now give an interpretation of the inductive description of modular $\Mcal$-algebras in \cref{thm:genus-extension} in more field theoretic terms.
Suppose we are given a symmetric monoidal functor
\[
    \Zcal \colon \Bord_2 \too \Vcal
\]
where $\Vcal$ is a symmetric monoidal category with suitable colimits.
The value at $S^1$ is an $\EtwoSO$-algebra
\[
    A \coloneq \Zcal(S^1) \in \Alg_{\EtwoSO}(\Vcal),
\]
which is referred to as the algebra of \hldef{local operators} or point operators \cite[Remark 2.35]{FreedHopkins}.
This can be thought of as the space of possible fields inside a small disk, as observed through the boundary $S^1$ of that disk.
The algebra structure allows us to combine such operators.

Suppose we now fix a closed genus $g$ surface $\Sigma_g$.
Then for any configuration of $k$ framed points on $\Sigma_k$, given by an embedding $\sqcup_k D^2 \hookrightarrow \Sigma_k$, we can construct a bordism
$\Sigma_g \setminus \sqcup_k D^2 \colon \sqcup_k S^1 \too \emptyset$
and applying $\Zcal$ to this bordism yields a map
\[
    \Zcal(\Sigma_g \setminus \sqcup_k D^2) \colon A^{\otimes k} = \Zcal(\sqcup_k S^1) \too \Zcal(\emptyset) = \unit.
\]

\begin{figure}[ht]
    \centering
    \def\svgwidth{.99\linewidth}
\begingroup%
  \makeatletter%
  \providecommand\color[2][]{%
    \errmessage{(Inkscape) Color is used for the text in Inkscape, but the package 'color.sty' is not loaded}%
    \renewcommand\color[2][]{}%
  }%
  \providecommand\transparent[1]{%
    \errmessage{(Inkscape) Transparency is used (non-zero) for the text in Inkscape, but the package 'transparent.sty' is not loaded}%
    \renewcommand\transparent[1]{}%
  }%
  \providecommand\rotatebox[2]{#2}%
  \newcommand*\fsize{\dimexpr\f@size pt\relax}%
  \newcommand*\lineheight[1]{\fontsize{\fsize}{#1\fsize}\selectfont}%
  \ifx\svgwidth\undefined%
    \setlength{\unitlength}{2137.79619833bp}%
    \ifx\svgscale\undefined%
      \relax%
    \else%
      \setlength{\unitlength}{\unitlength * \real{\svgscale}}%
    \fi%
  \else%
    \setlength{\unitlength}{\svgwidth}%
  \fi%
  \global\let\svgwidth\undefined%
  \global\let\svgscale\undefined%
  \makeatother%
  \begin{picture}(1,0.17554589)%
    \lineheight{1}%
    \setlength\tabcolsep{0pt}%
    \put(0,0){\includegraphics[width=\unitlength,page=1]{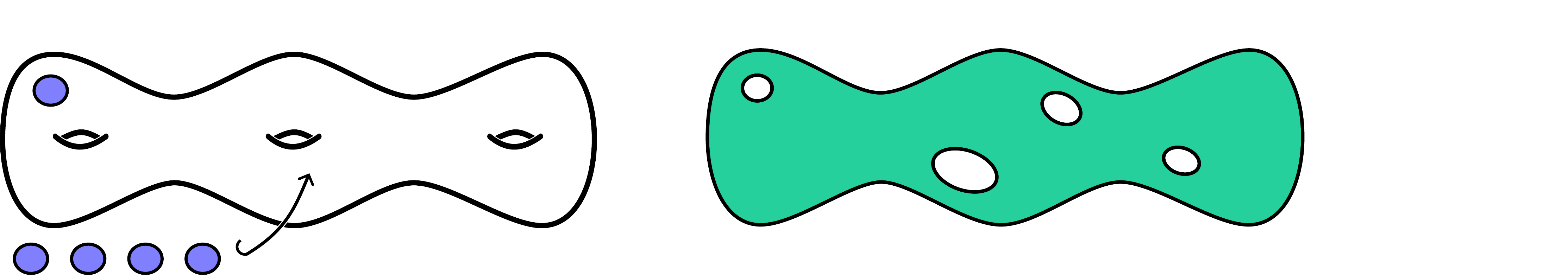}}%
    \put(0.22179808,0.00166735){\color[rgb]{0,0,0}\makebox(0,0)[lt]{\lineheight{1.25}\smash{\begin{tabular}[t]{l}$\Emb (\sqcup_k D^2, \Sigma_g)$\end{tabular}}}}%
    \put(0.58922362,0.00166735){\color[rgb]{0,0,0}\makebox(0,0)[lt]{\lineheight{1.25}\smash{\begin{tabular}[t]{l}$\Map_{\Bord}(\sqcup_k S^1, \emptyset)$\end{tabular}}}}%
    \put(0,0){\includegraphics[width=\unitlength,page=2]{correlation-complement.pdf}}%
    \put(0.90410768,0.00662985){\color[rgb]{0,0,0}\makebox(0,0)[lt]{\lineheight{1.25}\smash{\begin{tabular}[t]{l}$\emptyset$\end{tabular}}}}%
    \put(0.92659553,0.0716057){\color[rgb]{0,0,0}\makebox(0,0)[lt]{\lineheight{1.25}\smash{\begin{tabular}[t]{l}$\Sigma_g \setminus i(\sqcup_k D^2)$\end{tabular}}}}%
    \put(0.17359458,0.00774902){\color[rgb]{0,0,0}\makebox(0,0)[lt]{\lineheight{1.25}\smash{\begin{tabular}[t]{l}$i$\end{tabular}}}}%
  \end{picture}%
\endgroup%

    \caption{The complement of an embedding of $k$ disks into $\Sigma$ defines a bordism from $k$ circles to the empty manifold.}
    \label{fig:correlation-complement}
\end{figure}

When $\Vcal = \Vect$ this sends a tensor product of operators to a number, and this corresponds to ``inserting'' the operators at the points on $\Sigma_g$ prescribed by the disks, and then ``measuring'' the ``cross-section'' of the resulting interaction.
We can thus think of $\Zcal(\Sigma_g \setminus \sqcup_k D^2)$ as the \hldef{correlation function} of the operators \cite[Remark 2.36]{FreedHopkins}.
When two operators are inserted nearby we can instead merge them using the algebra multiplication on $A = \Zcal(S^1)$ and as we can do this for arbitrary configurations, the correlation functions at various points assemble into a map
\[
    \Zcal_g\colon \int_{\Sigma_g} A \too \unit,
\]
which by nature of its construction will also be $\Diff(\Sigma_g)$-invariant.
We call this the \hldef{universal genus $g$ correlation function}.

\subsubsection{An inductive description.}
Suppose we have a TFT that is only defined in genus $\le g-1$.
This can be made precise by saying that it is a modular algebra over $\Mcal^{(g-1)}$ or equivalently, a map of genus-restricted modular operads $\Mcal_{|\le g-1} \to \Udual(\Vcal)$, but we'll think of it as a kind of partially-defined symmetric monoidal functor
\(
    \Zcal \colon \Bord_2 \nrightarrow \Vcal
\)
that is only defined on surfaces whose path components are of genus $\le g-1$.%
\footnote{
    In fact, it's possible to make this notion precise, but comparing it to genus-restricted modular operads seems non-trivial.
}
Then, given such a genus restricted TFT, we can define a \hldef{correlation-constraint} in genus $g$
\[
    \Zcal_g^{\rm cut} \colon \int_{\Sigma_g}^{<g} A \too \unit
\]
where the restricted factorization homology is defined as a colimit over the poset of systems of essential curves
\[
    \hldef{\int_{\Sigma_g}^{<g} A} \coloneq \colim_{S \in \Cut^{<g}(\Sigma_g)^\op} \int_{\Sigma_g \setminus S} A.
\]
To make precise the construction of the correlation-constraint $\Zcal_g^{\rm cut}$ we essentially have to take the construction from \cref{cor:space-of-lifts}, but more conceptually speaking we can also describe it in terms of $\Zcal\colon \Bord_2 \nrightarrow \Vcal$.
For each system of essential curves $S \subset \Sigma_g$ 
and embedding $\sqcup_k D^2 \hookrightarrow \Sigma_g \setminus S$ the functor $\Zcal$ gives us a map 
\[
    \Zcal(\Sigma_g \setminus (S \sqcup \sqcup_k D^2)) \colon \Zcal(S^1)^{\otimes k} \too \Zcal(2S) 
\]
where $2S$ is the spherical normal bundle of $S \subset \Sigma_g$, which contains two circles for each curve in $S$.
Assembling this into a map out of factorization homology and composing with evaluation pairings $\Zcal(S^1 \sqcup S^1) \to \unit$ we get a map
\[
    \int_{\Sigma_g \setminus S} A \too A^{\otimes 2S} \xtoo{\mrm{ev}} \unit
\]
As we add or remove curves from $S$ this assembles into a map out of the colimit $\int_{\Sigma_g}^{<g} A$.

With this notation in hand, we can now give the following inductive description of $2$D TFTs.
\begin{cor}
    From a genus $(g-1)$-restricted TFT $\Zcal$ one can canonically construct a $\Diff(\Sigma_g)$-equivariant ``correlation constraint''
    \[
        \Zcal_g^{<g} \colon \int_{\Sigma_g}^{<g} A \too \unit.
    \]
    Extending $\Zcal$ to a genus $g$-restricted TFT is equivalent to constructing an extension
    \[\begin{tikzcd}[column sep = large]
        \int_{\Sigma_g}^{<g} A \ar[d] \ar[dr, "{\Zcal_g^{<g}}", bend left] & \\
        \int_{\Sigma_g} A \ar[r, dashed, "\Zcal_g"] & \unit
    \end{tikzcd}\]
    in the \category{} $\Fun(B\Diff(\Sigma_g), \Vcal)$.
\end{cor}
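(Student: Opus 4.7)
The plan is to observe that this corollary is essentially a field-theoretic repackaging of \cref{cor:space-of-lifts} in the case $\Ocal = \Mcal$, together with an explicit geometric identification of the abstract latching map in terms of the bordisms through which $\Zcal$ is defined. The first move is to translate a genus $(g-1)$-restricted TFT into a modular algebra: using $\Fun^\otimes(\Bord_2,\Vcal) \simeq \Algmod_\Mcal(\Vcal)$ and its genus-restricted versions, $\Zcal$ becomes a point of $\Algmod_{\Mcal^{(g-1)}}(\Vcal)$, and the fiber of the restriction functor over this point is described by \cref{cor:space-of-lifts} as the space of $\Diff(\Sigma_g)$-equivariant extensions of a certain map $\colim_{S\in\Cut^{g-1}(\Sigma_g)^\op} \int_{\Sigma_g\setminus S} A \to \unit$ through $\int_{\Sigma_g} A$. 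The source here is literally $\int_{\Sigma_g}^{<g} A$ by definition, so once the abstract map is identified with $\Zcal_g^{<g}$ the second half of the corollary follows.

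Next I would construct $\Zcal_g^{<g}$ directly from $\Zcal$ in bordism language. For each $S \in \Cut^{g-1}(\Sigma_g)$, cutting along $S$ produces a disjoint union of surfaces of genus $<g$; for any embedding $\sqcup_k D^2 \hookrightarrow \Sigma_g\setminus S$, further removing the disks yields a bordism $\Sigma_g \setminus (S\sqcup\sqcup_k D^2) \colon \sqcup_k S^1 \too 2S$ whose connected components all have genus $<g$. The partial TFT $\Zcal$ is therefore defined on it, giving a morphism $A^{\otimes k} \to \Zcal(2S)$. Composing with the evaluation pairings $\Zcal(2S) \to \unit$ that come from the cyclic (indeed modular) structure on $A$ and assembling over embeddings and then over cut systems produces the required $\Diff(\Sigma_g)$-equivariant map $\Zcal_g^{<g}\colon \int_{\Sigma_g}^{<g} A \too \unit$.

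The final step, which I expect to be the main obstacle, is to verify that this bordism construction agrees with the abstract map of right modules produced by \cref{cor:space-of-lifts}. Concretely, one has to check that under the identifications
\[
L_g\Mcal \simeq \bigl(\textstyle\colim_{S\in\Cut^{g-1}(\Sigma_g)^\op} E_{\Sigma_g\setminus S}\bigr) \sslash \Diff(\Sigma_g), \qquad \Mcal(\frc_\bullet^{(g)}) \simeq E_{\Sigma_g}\sslash \Diff(\Sigma_g)
\]
the natural transformation $L_g\Mcal \to A^\ast \omega_\Vcal$ extracted from $\Zcal$ via \cref{thm:genus-extension} is computed, on an embedding $\sqcup_k D^2 \hookrightarrow \Sigma_g\setminus S$, precisely by the composite $A^{\otimes k}\to \Zcal(2S)\to\unit$ above. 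This is fundamentally a bookkeeping task: one tracks the Segal condition through the envelope-and-dualizing-module identification $\CycOp^\cpl \simeq \RMod^{\dual,\cpl}$ so that $\omega_\Vcal$ restricted along $A$ becomes the disk presheaf $\Map_\Vcal(A^{\otimes -},\unit)$, and then observes that the latching map at $\Sigma_g$ is built, inside $\Mcal$, by gluing the low-genus pieces that sit on either side of $S$ — which is exactly the geometric operation implemented by the evaluation pairings on $\Zcal(2S)$. Granting this identification, the extension problem in the statement of the corollary is literally the one in \cref{cor:space-of-lifts}.
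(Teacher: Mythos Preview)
Your proposal is correct and matches the paper's approach: the corollary is stated there without a formal proof, as it is meant to be a field-theoretic restatement of \cref{cor:space-of-lifts}, with the correlation constraint $\Zcal_g^{<g}$ constructed in the preceding paragraphs via exactly the bordism-and-evaluation-pairing description you give. Your observation that the identification of the geometric construction with the abstract latching map is the only non-formal step is accurate; the paper handles this by asserting that ``to make precise the construction of the correlation-constraint $\Zcal_g^{\rm cut}$ we essentially have to take the construction from \cref{cor:space-of-lifts}'' and then offering the bordism picture as the conceptual interpretation.
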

Thus, defining a $2$D TFT valued in $\Vcal$ is equivalent to giving a (cyclic) $\EtwoSO$-algebra in $\Vcal$ and a family of universal genus $g$ correlation functions, each compatible with the correlation constraint constructed from the lower genus data.

\begin{rem}
    While this might seem like a lot of data, the map $\int_{\Sigma_g}^{<g} A \to \int_{\Sigma_g} A$ might simply be an equivalence in situations we care about, thus trivializing the extension problem.
    By \cref{cor:truncated-algebras} this for example happens when $\Vcal$ is an $(n,1)$-category and $2g \ge n$.
\end{rem}

\begin{rem}
    Brochier--Woike give a classification of modular functors that uses factorization homology similarly to how it appears here \cite{BrochierWoike-modular}.
    They work in a $(2,1)$-categorical setting, which according to our heuristic should imply that the TFT is determined in genus $\le 1$, but they are in fact able to reduce it to the genus $0$ classification of \cite{MW23}, by allowing suitable central extensions of the mapping class groups involved.
\end{rem}

\subsection{A spectral sequence involving the homology of mapping class groups}

\subsubsection{Invertible $2$D TFTs and the GMTW theorem.}
So far we have mostly considered examples where the target category $\Ccal$ was truncated or admitted some colimits.
The classification result also turns out to be interesting when we consider the exact opposite case: invertible TFTs.
Let $X \in \Sp$ be any connective spectrum and $\Omega^\infty X$ its infinite loop space, which we can think of as a symmetric monoidal groupoid where every object is invertible.
Then invertible TFTs valued in $X$ are symmetric monoidal functors
\[
    \Bord_2 \too \Omega^\infty X.
\]
We can compute the space of such functors using 
the Galatius--Madsen--Tillmann--Weiss theorem $|\Bord_2| \simeq \Omega^\infty \Sigma \MTSO(2)$ \cite{GMTW} and the adjunction
\[
    |-|^{\rm gp} \colon \SM \adj \Omega^\infty\text{-spaces} \simeq \Sp_{\ge0} \cocolon \Omega^\infty
\]
where the left-adjoint inverts all morphisms in a symmetric monoidal \category{} and then group-completes the resulting symmetric monoidal groupoid. (This last step won't be necessary for us, as the symmetric monoidal envelope of a modular operad is always rigid and thus its groupoidification is group-like.)
Plugging in \cite{GMTW} we get
\[
    \Fun^\otimes(\Bord_2, \Omega^\infty X)
    \simeq \Map_{\Omega^\infty}(|\Bord_2|, \Omega^\infty X)
    \simeq \Map_{\Sp}(\tau_{\ge 0} \Sigma\MTSO(2), X).
\]
The spectrum $\MTSO(2)$ is a Thom spectrum, which differs from $\Sigma^\infty_+ \CP^\infty$ only by a cell in degree $-2$.
In fact, as we'll mostly be working rationally in this section, we might as well think of $\tau_{\ge 0} \Sigma \MTSO(2)$ as $\Sigma^{\infty+1} \CP^\infty_+$.
More elaborate versions of this story have been used to great effect by Freed--Hopkins \cite{FreedHopkins} to make computations relevant to condensed matter physics.
In dimension $2$ the spectrum $|\Bord_2|$ is particularly simple, and in light of this fairly succinct classification of invertible $2$D TFTs it might seem odd to apply the inductive classification result \cref{thm:genus-extension} to it.
However, it turns out that in this case the obstruction in each genus step is quite interesting in its own right.

\subsubsection{The genus filtration on \texorpdfstring{$\MTSO(2)$}{MTSO(2)}.}
From the genus filtration on $\Mcal$ we get a convergent filtration of the classifying space of the surface category as
\[
    |\Env(\Mcal^{(0)})| \too |\Env(\Mcal^{(1)})| \too |\Env(\Mcal^{(2)})| \too \dots
    \too |\Env(\Mcal)| = |\Bord_2| = \Omega^{\infty-1}\MTSO(2)
\]
which can be delooped to give a convergent filtration of $\tau_{\ge 0} \Sigma \MTSO(2)$.
(Here filtration just means a functor from $(\Nbb, \le)$ and convergent means that the colimit is as indicated.)
By applying \cref{cor:CY-algebras} and \cref{thm:genus-extension} in the case of invertible TFTs we can compute the initial term and the associated graded of this filtration.
\begin{cor}
    The $0$th step of the filtration is 
    \[
        \Sigma^{\infty+1} \BSO(3)_+ 
    \]
    and the $g$th associated graded is 
    \[
        \Sigma^\infty S\left(\frac{|\Cut^{<g}(\Sigma_g)|}{\Diff(\Sigma_g})\right)
    \]
    where $S$ denotes unreduced suspension.
\end{cor}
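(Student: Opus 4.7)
The plan is to apply the symmetric monoidal envelope and the group-completed classifying space to the filtration $\Mcal^{(0)} \to \Mcal^{(1)} \to \dots$, obtaining a filtration of connective spectra $F_g := \tau_{\ge 0}\Sigma|\Env(\Mcal^{(g)})|^{\rm gp}$ converging to $\tau_{\ge 0}\Sigma|\Bord_2|^{\rm gp} \simeq \tau_{\ge 0}\Sigma\MTSO(2)$. Convergence is automatic: $\Env$, $|-|^{\rm gp}$, and $\tau_{\ge 0}\Sigma$ are all left adjoints and so preserve the sequential colimit, and the GMTW theorem identifies the limit.

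For the base case $F_0$, \cref{prop:Hbdy-free-on-E2SO} gives $\Env(\Mcal^{(0)}) \simeq \Env(\Hcal) = \Hbdy$. I would invoke a handlebody version of the Madsen--Tillmann--Weiss theorem to compute $|\Hbdy|^{\rm gp} \simeq \Omega^\infty\Sigma^\infty\BSO(3)_+$. The key geometric input is that the only ``closed'' genus-$0$ handlebody is $D^3$, whose diffeomorphism group is $\SO(3)$ by Hatcher's Smale conjecture; this accounts for the $\BSO(3)$ factor. Suspending yields $F_0 \simeq \Sigma^{\infty+1}\BSO(3)_+$.

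For the successive associated gradeds with $g\ge 1$, I would combine \cref{thm:genus-extension} with the computation of \cref{lem:Cutg-as-fiber}. By the former, $\Mcal^{(g)}$ is obtained from $\Mcal^{(g-1)}$ by freely adjoining the new genus-$g$ operations subject to the compatibility encoded by the latching map $L_g\Mcal \to \Mcal(\frc_\bullet^{(g)})$; the latter identifies on the closed corolla $\frc_0^{(g)}$ the spaces $L_g\Mcal(\frc_0^{(g)}) \simeq |\Cut^{<g}(\Sigma_g)|\sslash\Diff(\Sigma_g)$ and $\Mcal(\frc_0^{(g)}) \simeq B\Diff(\Sigma_g)$. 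Translating through $\Env$ and $|-|^{\rm gp}$, this becomes a cofibre sequence of spectra, and the identity $\mathrm{cof}(Y \to \ast) \simeq SY$ in unpointed spaces (where $S$ is unreduced suspension), commuted with homotopy orbits, yields the claimed $\Sigma^\infty S(|\Cut^{<g}(\Sigma_g)|/\Diff(\Sigma_g))$. Higher-arity $\frc_k^{(g)}$ contributions with $k\ge 1$ do not produce further associated-graded cells because they are absorbed by the already-present symmetric monoidal structure coming from the objects.

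The principal obstacle is the base case: while the associated graded is a fairly direct linearization of \cref{thm:genus-extension}, computing $|\Hbdy|^{\rm gp}$ requires a Madsen--Tillmann--Weiss theorem adapted to handlebody cobordisms (whose boundaries split into gluing disks and cut surfaces), which is non-trivial and goes beyond what has been formalized in these notes. An alternative would be to develop a general formula for the group-completed symmetric monoidal envelope of a cyclic $\infty$-operad and apply it to $\Hcal \simeq \Indcycmod(\EtwoSO)$, with the Smale conjecture entering at the very last step.
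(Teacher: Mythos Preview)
Your approach for the associated graded ($g\ge 1$) matches the paper's: both invoke \cref{thm:genus-extension} together with the identification of the latching module via \cref{lem:Cutg-as-fiber}, and then specialize to invertible targets to turn the extension problem into a cofibre computation. Your remark that the $k\ge 1$ corollas ``are absorbed by the symmetric monoidal structure'' is the right intuition, though the paper would phrase this as: in a Picard target the right module $A^*\omega_\Ccal$ is representable, so the space of right-module maps out of $L_g\Mcal \to \Mcal(\frc_\bullet^{(g)})$ is computed by a single cofibre, which at arity $0$ gives the stated unreduced suspension.

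The genuine divergence is at the $0$th step. You propose a handlebody Madsen--Tillmann--Weiss theorem to compute $|\Hbdy|^{\rm gp}$, and you correctly flag this as the main obstacle. The paper avoids this entirely: it applies \cref{cor:CY-algebras} directly. A cyclic $\EtwoSO$-algebra in a Picard groupoid $\Omega^\infty X$ is an $\EtwoSO$-Calabi--Yau algebra $(A,\tau)$, and in the Picard setting the non-degeneracy condition on $\tau$ is vacuous (every pairing between invertible objects is a duality pairing). What remains is the datum of an $\SO(3)$-invariant trace $\tau\colon \int_{S^2} A \to \unit$; unwinding this representably in $X$ yields $\Map(\BSO(3), \Omega^{\infty+1}X)$, whence $F_0 \simeq \Sigma^{\infty+1}\BSO(3)_+$. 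So no GMTW-type input is needed beyond the one already used to identify the limit of the filtration---the $\BSO(3)$ arises purely from the $\SO(3)$-equivariance in the dualizing module $\omega_{\EtwoSO} = E_{S^2}\sslash\SO(3)$, not from a scanning argument. Your ``alternative'' at the end is heading in this direction, but the point is that the general formula you ask for is exactly \cref{cor:CY-algebras}, already available in the notes.
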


For $g \ge 2$ we can identify the space of cut systems with the curve complex $C(\Sigma_g)$.
Let us write $\Gamma_g \coloneq \pi_0(\Diff(\Sigma_g))$ for the genus $g$ mapping class group.
As recalled in \cref{thm:curve complex}, it is a theorem of \cite{Harer1986,Ivanov1987} that $C(\Sigma_g)$ is a virtual dualizing complex for $\Gamma_g$ and that thus the $k$th rational homology of the homotopy quotient $C(\Sigma_g)\sslash \Gamma_g$ is isomorphic to the group cohomology of $\Gamma_g$ in degree $6g-6-k$.
Assembling all this (and shifting down by one) we get the following spectral sequence from the filtered spectrum $\Sigma \tau_{\ge 0} \MTSO(2)$.

\begin{figure}[p]
    \centering
    \input{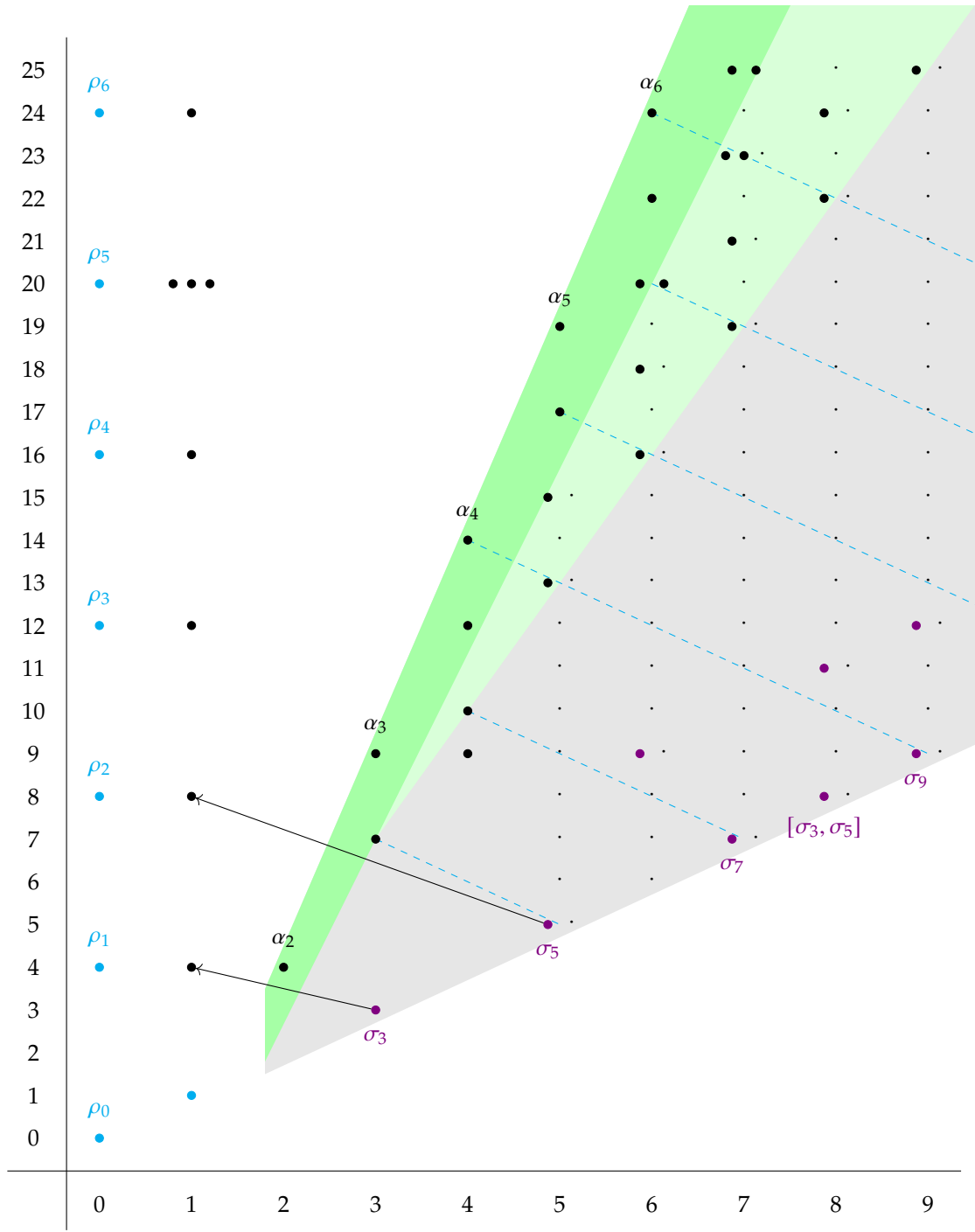}
    \printpage[ name = genus-spseq ]
    \caption{The genus spectral sequence. "$\bullet$" indicates a known non-zero class and "$\cdot$" indicates a group of unknown size. For example, $E_{5,5}^1$ is at least of dimension $1$. See \cref{ex:spectral-sequence}.}
    \label{fig:genus-spectral-sequence}
\end{figure}

\begin{cor}\label{cor:spectral-sequence}
    There is a convergent spectral sequence in rational vector spaces
    \[
        E^1_{g,k} = \begin{cases}
            H_k(\BSO(3); \Qbb) & \text{ for } g=0 \\
            H_{k-1}(B\Diff(S^1 \times S^1)/B\Diff(S^1 \times D^2); \Qbb) & \text{ for } g=1 \\
            H^{5g-6-k}(B\Gamma_g; \Qbb) & \text{ for } g\ge2 
        \end{cases}
        \Rightarrow
        H_{g+k}(\CP^\infty).
    \] 
\end{cor}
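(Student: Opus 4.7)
The plan is to apply the spectral sequence of a filtered spectrum to the filtration of $\tau_{\ge 0}\Sigma\MTSO(2)$ constructed in the preceding corollary. Writing $F_g$ for the $g$th stage, so that
\[
    F_0 \too F_1 \too F_2 \too \dots \too \colim_g F_g \simeq \tau_{\ge 0}\Sigma\MTSO(2),
\]
one obtains a spectral sequence whose $E^1$-page is computed from $\pi_*(F_g/F_{g-1}; \Qbb)$ and which converges to the rational homotopy of the colimit. By the GMTW theorem combined with the Thom isomorphism for the tautological rank $2$ bundle over $\BSO(2) \simeq \CP^\infty$, the rational homotopy of the colimit is (up to a shift that must be chosen consistently with the filtration indexing) a copy of $H_*(\CP^\infty; \Qbb)$, so that the abutment takes the stated form $H_{g+k}(\CP^\infty; \Qbb)$, which is $\Qbb$ in each even total degree.

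For the $E^1$-page I read off the associated graded pieces from the preceding corollary. The $g = 0$ step is $F_0 \simeq \Sigma^{\infty+1}\BSO(3)_+$, whose rational homotopy recovers $H_k(\BSO(3); \Qbb) = \Qbb[p_1]$ (the powers of the first Pontryagin class, concentrated in degrees $0, 4, 8, \dots$), giving $E^1_{0,k}$ as stated. For $g \ge 1$ the associated graded is $\Sigma^\infty S\bigl(|\Cut^{<g}(\Sigma_g)|/\Diff(\Sigma_g)\bigr)$, so up to the unreduced-suspension shift $E^1_{g,k}$ computes the rational homology of the homotopy quotient $|\Cut^{<g}(\Sigma_g)|/\Diff(\Sigma_g)$.

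In the case $g \ge 2$ any non-empty system of essential curves already cuts $\Sigma_g$ into components of genus $<g$ (a single non-separating curve decreases genus by one), so $|\Cut^{<g}(\Sigma_g)|$ is the curve complex $C(\Sigma_g)$. By the Harer--Ivanov theorem (\cref{thm:curve complex}) this complex is $(2g-2)$-spherical and its top homology is a virtual dualizing module of dimension $4g-4$ for $\Gamma_g$, producing the duality isomorphism $H_k(C(\Sigma_g)\sslash \Gamma_g; \Qbb) \cong H^{6g-6-k}(B\Gamma_g; \Qbb)$. Combining this with the degree shifts from the suspension and the filtration indexing yields $E^1_{g,k} = H^{5g-6-k}(B\Gamma_g; \Qbb)$. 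The exceptional case $g=1$ has to be handled by hand, since the curve complex of $T^2$ is not Cohen--Macaulay: essential cut systems on $T^2$ consist of parallel copies of a single primitive curve, and each such curve bounds the meridian of a solid-torus filling, which identifies $|\Cut^{<1}(\Sigma_1)|/\Diff(\Sigma_1)$ with the cofiber $B\Diff(S^1 \times S^1)/B\Diff(S^1 \times D^2)$.

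The main obstacle is the careful reconciliation of the various degree shifts (the delooping, the virtual rank $2$ Thom isomorphism, the unreduced-suspension shift in $\Sigma^\infty S(-)$, and the $6g-6$ versus $5g-6$ discrepancy between the Harer--Ivanov isomorphism and the stated formula), which must be performed consistently across all three rows of the $E^1$-table. Granted the previous corollary and Harer--Ivanov, the genus $\ge 2$ rows of the $E^1$-page are essentially formal, while the $g=1$ row is the only place where one has to reason directly with the topology of surfaces rather than within the general curve-complex framework.
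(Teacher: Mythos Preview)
Your overall approach is exactly the paper's: take the spectral sequence of the filtered spectrum from the previous corollary, identify the abutment via GMTW and a rational Thom isomorphism, and read off the $E^1$-page from the associated graded together with Harer--Ivanov duality for $g\ge 2$. The paper's own argument is literally ``assembling all this (and shifting down by one)'', so you have reproduced it.

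There is one genuine slip in your $g=1$ paragraph. The orbit--stabiliser reasoning you give (``each such curve bounds the meridian of a solid-torus filling'') shows that $\Diff(T^2)$ acts transitively on the components of $|\Cut^{0}(T^2)|$ with stabiliser the image of $\Diff(S^1\times D^2)$; hence the \emph{homotopy quotient} $|\Cut^{0}(T^2)|\sslash\Diff(T^2)$ is $B\Diff(S^1\times D^2)$, not the cofiber $B\Diff(T^2)/B\Diff(S^1\times D^2)$. (This is also what \cref{lem:Cutg-as-fiber} combined with $\Mcal^{(0)}\simeq\Hcal$ gives directly.) The two spaces are related but not equal: the cofiber sits in a cofiber sequence $B\Diff(S^1\times D^2)\to B\Diff(T^2)\to B\Diff(T^2)/B\Diff(S^1\times D^2)$, and it is only because $B\Diff(T^2)$ is rationally contractible that the cofiber is rationally the unreduced suspension of $B\Diff(S^1\times D^2)$. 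So your identification cannot be made at the level of spaces as you wrote it; one has to pass through this extra step (and then keep track of the resulting suspension shift together with the other shifts you flagged). Apart from this, your sketch is correct.
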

Note that the cohomology of $\CP^\infty$ has dimension $1$ in even degrees and is $0$ in odd degrees.
The classes in degrees $0$ mod $4$ are accounted for by the $0$th column, which survives to the $E^\infty$-page.
Thus, there must be exactly one class in each degree $4i+2$ that survives to the $E^\infty$-page and every other class must cancel in some way.
See \cref{fig:genus-spectral-sequence} for a picture of the $E^1$ page, to the extent that $H^*(B\Gamma_g; \Qbb)$ is known.

\begin{rem}\label{ex:spectral-sequence}
    The picture of the $E^1$-page of the genus spectral sequence in \cref{fig:genus-spectral-sequence} contains information from several sources.
    The computation in genus $g=3,4$ is due to Looijenga \cite{Looijenga-M3} and Tommasi \cite{Tommasi-M4}.
    In the dark green region we know homological stability and thus by \cite{MadsenWeiss} the cohomology is a polynomial ring on the Miller--Morita--Mumford classes $\kappa_i$.
    These classes generate the ``tautological ring'', which is known to have interesting structure, see e.g.~\cite{Faber1999}.
    The purple classes are the top weight classes found by \cite{CGP-tropical}.
    Cyan classes indicate classes that must survive to the $E^\infty$-page, the dashed cyan lines indicate that there must be exactly one class on them that survives to the $E^\infty$ page.
    The $d_2$ and $d_4$ differential can be deduced from the map of spectral sequences obtained by considering the same filtration for the terminal graded modular operad $\gr$, see \cref{rem:gr-spectral-sequence}.
\end{rem}

\begin{example}
    It follows from the spectral sequence \cref{fig:genus-spectral-sequence} that $H_{14}(B\Gamma_5)$ has dimension $1$ or $2$, as only one of the classes can support a ($d_4$) differential, and $E_{5,5}^\infty$ must be at most one-dimensional.
    This group contains a non-trivial class by \cite{CGP-tropical}.
\end{example}

\begin{rem}\label{rem:gr-spectral-sequence}
    If one considers the genus filtration for the grading modular operad $\gr$ instead of $\Mcal$, then the resulting spectral sequence has on its $E_1$-page the homologies of tropical moduli spaces $\Delta_g$, and it converges to the homology of $|\Env(\gr)| \simeq S^1$ (by \cite[Theorem B]{Jan-tropical}).
    The $E_1$ and $E_\infty$ page of this spectral sequence are isomorphic to the respective pages of the spectral sequence described in \cite{KWZ17-differentials-on-graph-complexes} (taking $d$ to be even and regrading appropriately).
    It seems reasonable to suspect that the spectral sequences are isomorphic.
    Note that the map $\Mcal \to \gr$ induces a map of spectral sequences, which can be used to detect non-trivial differentials in the spectral sequence from \cref{cor:spectral-sequence}.
\end{rem}

\begin{rem}
    This filtration and spectral sequence look conceptually similar to Rognes' stable rank filtration and the spectral sequences in the recent work of Brown--Chan--Galatius--Payne \cite{BCGP-Hopf}.
    In particular, they also encounter a spectral sequence that looks isomorphic to the one in \cite{KWZ17-differentials-on-graph-complexes}, see \cite[\S5.5.1]{BCGP-Hopf}.
    However, the precise connection between \cref{cor:spectral-sequence} and \cite{BCGP-Hopf} remains unclear.
\end{rem}

\printbibliography[heading=bibintoc]

\end{document}